\def\namedlabel#1#2{\begingroup
	#2%
	\def\@currentlabel{#2}%
	\phantomsection\label{#1}\endgroup
}
\DeclareMathOperator{\dv}{div}
\DeclareMathOperator{\loc}{loc}
\newcommand{\RR}{\mathbb{R}}
\newcommand{\mA}{\mathcal{A}}
\newcommand{\Om}{\Omega}
\newcommand{\na}{\nabla}
\newcommand{\pa}{\partial}
\newcommand{\sig}{\sigma}
\newcommand{\La}{\Lambda}
\newcommand{\al}{\alpha}
\newcommand{\de}{\delta}
\newcommand{\ep}{\epsilon}
\newcommand{\om}{\omega}
\newcommand{\la}{\lambda}
\newcommand{\data}{\mathit{data}}
\newcommand{\vh}{v_h}
\newcommand{\vhla}{v_h^{\La}}
\newcommand{\uu}{u_0}
\newcommand{\lai}{\la_i}
\newcommand{\laj}{\la_j}
\newcommand{\vla}{v^\La}
\newcommand{\Qlw}{{Q_l^\lambda(w)}}
\newcommand{\uik}{{K_iU_i}}
\newcommand{\Qo}{Q_{R_2,S_2}}
\newcommand{\vp}{\varphi}
\DeclareMathOperator*{\esssup}{ess\,sup}
\newcommand{\NN}{\mathbb{N}}
\theoremstyle{plain}
\newtheorem{theorem}{Theorem}[section]
\newtheorem{lemma}[theorem]{Lemma}
\newtheorem{definition}[theorem]{Definition}
\newtheorem{proposition}[theorem]{Proposition}
\newtheorem{remark}[theorem]{Remark}
\def\Xint#1{\mathchoice
	{\XXint\displaystyle\textstyle{#1}}%
	{\XXint\textstyle\scriptstyle{#1}}%
	{\XXint\scriptstyle\scriptscriptstyle{#1}}%
	{\XXint\scriptstyle\scriptscriptstyle{#1}}%
	\!\int}
\def\XXint#1#2#3{{\setbox0=\hbox{$#1{#2#3}{\int}$}
		\vcenter{\hbox{$#2#3$}}\kern-.5\wd0}}
\def\Yint#1{\mathchoice
	{\YYint\displaystyle\textstyle{#1}}%
	{\YYint\textstyle\scriptstyle{#1}}%
	{\YYint\scriptstyle\scriptscriptstyle{#1}}%
	{\YYint\scriptscriptstyle\scriptscriptstyle{#1}}%
	\!\iint}
\def\YYint#1#2#3{{\setbox0=\hbox{$#1{#2#3}{\iint}$}
		\vcenter{\hbox{$#2#3$}}\kern-.51\wd0}}
\def\longdash{{-}\mkern-3.5mu{-}} 
\def\fiint{\Yint\longdash}
\def\Xint#1{\mathchoice
	{\XXint\displaystyle\textstyle{#1}}%
	{\XXint\textstyle\scriptstyle{#1}}%
	{\XXint\scriptstyle\scriptscriptstyle{#1}}%
	{\XXint\scriptscriptstyle\scriptscriptstyle{#1}}%
	\!\int}
\def\XXint#1#2#3{{\setbox0=\hbox{$#1{#2#3}{\int}$ }
		\vcenter{\hbox{$#2#3$ }}\kern-.6\wd0}}
\def\dashint{\Xint-}
\let\orgdescriptionlabel\descriptionlabel
\renewcommand*{\descriptionlabel}[1]{%
	\let\orglabel\label
	\let\label\@gobble
	\phantomsection
	\edef\@currentlabel{#1}%
	\let\label\orglabel
	\orgdescriptionlabel{#1}%
}
\numberwithin{equation}{section}
\DeclareRobustCommand{\rchi}{{\mathpalette\irchi\relax}}
\newcommand{\irchi}[2]{\raisebox{\depth}{$#1\chi$}} 
\def\Xint#1{\mathchoice
    {\XXint\displaystyle\textstyle{#1}}%
    {\XXint\textstyle\scriptstyle{#1}}%
    {\XXint\scriptstyle\scriptscriptstyle{#1}}%
    {\XXint\scriptscriptstyle\scriptscriptstyle{#1}}%
    \!\int}
\def\XXint#1#2#3{\setbox0=\hbox{$#1{#2#3}{\int}$}
    \vcenter{\hbox{$#2#3$}}\kern-0.5\wd0}
\def\fint{\Xint-}
\def\dashint{\Xint{\raise4pt\hbox to7pt{\hrulefill}}}
\def\XXiint#1#2#3{\setbox0=\hbox{$#1{#2#3}{\iint}$}
    \vcenter{\hbox{$#2#3$}}\kern-0.5\wd0}
\begin{document}
	
\title[Lipschitz truncation]{Lipschitz truncation method for parabolic double-phase systems and applications}

\author{Wontae Kim}
\address[Wontae Kim]{Department of Mathematics, Aalto University, P.O. BOX 11100, 00076 Aalto, Finland}
\email{wontae.kim@aalto.fi}

\author{Juha Kinnunen}
\address[Juha Kinnunen]{Department of Mathematics, Aalto University, P.O. BOX 11100, 00076 Aalto, Finland}
\email[Corresponding author]{juha.k.kinnunen@aalto.fi}

\author{Lauri S\"arki\"o}
\address[Lauri S\"arki\"o]{Department of Mathematics, Aalto University, P.O. BOX 11100, 00076 Aalto, Finland}
\email{lauri.sarkio@aalto.fi}

\everymath{\displaystyle}

\makeatletter
\@namedef{subjclassname@2020}{\textup{2020} Mathematics Subject Classification}
\makeatother

\begin{abstract}
We discuss a Lipschitz truncation technique for parabolic double-phase problems of $p$-Laplace type in order to prove
energy estimates and uniqueness results for the Dirichlet problem. 
Moreover, we show existence for a non-homogeneous double-phase problem.
The Lipschitz truncation method is based on a Whitney-type covering result and a related partition of unity in the intrinsic geometry for the double-phase problem.
\end{abstract}

\keywords{Parabolic double-phase systems, existence theory, Lipschitz truncation method}
\subjclass[2020]{35K55, 35K65,  35D30, 35A01, 35A02}
\maketitle

\section{Introduction}
This paper discusses weak solutions $u=u(z)=u(x,t)$ to parabolic double-phase systems of type
\[
    u_t-\dv(|\na u|^{p-2}\na u+a(z)|\na u|^{q-2}\na u)=-\dv(|F|^{p-2}F+a(z)|F|^{q-2}F)
\]
in $\Omega_T=\Omega\times(0,T)$, where $\Omega$ is a bounded domain in $\mathbb{R}^n$ and $T>0$. Here $2\le p<q<\infty$ and the coefficient $a(\cdot)$ is non-negative and H\"older continuous. It is natural to assume that the gradient of a weak solution and the source term satisfy
\[
    \iint_{\Omega_T}\left(|\na u|^p+a(z)|\na u|^q\right)\,dz+\iint_{\Omega_T}\left(|F|^p+a(z)|F|^q\right)\,dz<\infty.
\]
Observe that the coefficient function $a(\cdot)$ may vanish in a subset of the domain and thus the condition above does not imply that $|\na u|\in L^q(\Omega_T)$.
One of the challenges is that the structure of the double-phase systems is not dominated by the $p$-growth or the $q$-growth. 
Indeed, the intrinsic geometry depends on the location and it switches between two phases.

Energy estimates are of fundamental importance in existence and regularity results for partial differential equations.
There is a delicate issue which does not occur in the elliptic case: A weak solution to the parabolic double-phase system under the natural function space assumption above is not an admissible test function after mollification in time, see Remark~\ref{rem_testfcn}.
A similar problem arises in the uniqueness of the Dirichlet problem since the difference of two solutions cannot be applied as a test function. Proposition~\ref{prop_liptr1} and Proposition~\ref{uni_prop} contain two versions of Lipschitz truncation for parabolic double-phase problems.
These are applied to prove the energy estimate in Theorem~\ref{thm_cac} and the uniqueness of solutions in Theorem~\ref{main2}.
As a corollary we obtain the higher integrability result for the gradient in \cite{KKM} without the additional assumption $|\na u|\in L^q(\Omega_T)$, see Remark~\ref{rem_hi}. 
The existence of solutions to a non-homogeneous Dirichlet problem \eqref{e2} is discussed in Theorem~\ref{main3}.

The main idea of the Lipschitz truncation is to construct an admissible test function by keeping it unchanged in the set where the strong maximal function of the gradient is small and redefining it in the complement by applying a partition of unity related to a Whitney covering.  
The Whitney covering in Proposition~\ref{prop_whitney} is constructed in an intrinsic geometry for the parabolic double-phase problem and it involves the $p$-intrinsic and $(p,q)$-intrinsic phases as in \cite{KKM}. 
A careful analysis is needed to combine the phases.
Since the intrinsic geometry depends on the location, it is challenging to obtain uniform bounds in a Vitali covering argument.
The partition of unity in Proposition~\ref{prop_whitney} is subordinate to the Whitney covering and reflects the double-phase structure. 
Maximal function estimates imply the convergence of the Lipschitz truncation to the original test function.
The existence proof is based on an approximation with regularized problems, Calder\'on-Zygmund estimates in \cite{CZ}, compactness arguments and the Minty-Browder approach.

The Lipschitz truncation was introduced for the elliptic setting in \cite{MR751305,MR970512}.
The corresponding technique for parabolic $p$-Laplace systems was developed
in \cite{MR1948889} and discussed further in \cite{MR4281251,MR4351758,MR3059060,MR3672391}. 
It has also been used to prove existence result for parabolic problems in \cite{MR3985550,MR2668872}. 
The regularity theory of elliptic double-phase problems has been studied in \cite{MR3348922,MR3294408,MR3447716,MR3985927,MR2076158}.
Very little is known about the parabolic double-phase problems. 
Local higher integrability of the gradient has been studied in \cite{KKM},
the existence of weak solutions to parabolic double-phase systems was proved in the homogeneous case in \cite{MR3532237} and in the scalar case with non-divergence data in \cite{MR3985549}. Our existence result seems to be new in the vectorial case with non-homogeneous data.
We are confident that our results can be applied to solve further regularity problems for the parabolic double-phase systems.

\section{Notation and main results}

\subsection{Notation} 
Let $\Omega$ be a bounded open subset in $\mathbb{R}^n$ and $\Omega_T=\Omega\times(0,T)$, $T>0$.  
A point in $\RR^{n+1}$ is denoted as $z=(x,t)$ or $w=(y,s)$, where $x,y \in \RR^n$ and $t,s\in \RR$.
A ball with center $x_0\in\RR^n$ and radius $\rho>0$ is denoted as
\[
    B_\rho(x_0)=\{x\in \RR^n:|x-x_0|<\rho\}
\]
 and a cube centered at the origin is denoted as
\[
 D_\rho=\{(x_1,...,x_n)\in \RR^n: |x_i|<\rho,\,i\in \{1,\dots,  n\}\}.
\]
 Let $t_0\in\RR$ and $\la>0$. Parabolic cylinders with quadratic scaling in time are denoted as
\[
    Q_\rho(z_0)=B_\rho(x_0)\times I_\rho(t_0)
    \quad\text{and}\quad 
     C_\rho=D_\rho\times I_\rho(0),
\]
where
\[
 I_\rho(t_0)=(t_0-\rho^2,t_0+\rho^2).
\]

We use the following notation for the double-phase functional. For the non-negative coefficient function $a(\cdot)$ extended to $\RR^{n+1}$, see \eqref{extended_a}, we define a function $H(z,s):\RR^{n+1}\times \RR^+\longrightarrow\RR^+$ as
\[
    H(z,s)=s^p+a(z)s^q.
\]
For $\la\geq1$, a $p$-intrinsic cylinder centered at $z_0=(x_0,t_0)$ is 
\begin{align}\label{def_Q_cylinder}
	Q_{\rho}^\la(z_0)=
	B_{\rho}(x_0)\times I_{\rho}^\la(t_0),
 \quad I_{\rho}^\la(t_0)=(t_0-\la^{2-p}\rho^2,t_0+\la^{2-p}\rho^2)
\end{align}
and a $(p,q)$-intrinsic cylinder centered at $z_0=(x_0,t_0)$ is
\[
 G_{\rho}^\la(z_0)=B_{\rho}(x_0)\times J_{\rho}^\la(t_0),
 \quad J_\rho^{\la}(t_0)=\left(t_0-\frac{\la^2}{H(z_0,\la)}\rho^2,t_0+\frac{\la^2}{H(z_0,\la)}\rho^2\right).
\]
The dependence from $z_0$ is omitted in the notation $J_\rho^{\la}(t_0)$ since $\la$ will be chosen so that the denominator $H(z_0,\la)$ remains fixed regardless of $z_0$ whenever it appears. Here, $\la^{2-p}$ and $\tfrac{\la^2}{H(z_0,\la)}$ are called the scaling factors in time. Note that if $\la=1$, then $Q_\rho(z_0)=Q^\la_{\rho}(z_0)$. Moreover, we write
\[
    cQ_\rho^\la(z_0)=Q_{c\rho}^\la(z_0)
    \quad\text{and}\quad 
    cG_\rho^\la(z_0)=G_{c\rho}^\la(z_0)
\]
 for $c>0$. We also consider parabolic cylinders with arbitrary scaling in time and denote
 \begin{align}\label{def_ell}
     Q_{r,s}(z_0)=B_r(x_0)\times \ell_{s}(t_0),\quad \ell_s(t_0)=(t_0-s,t_0+s),\quad r,s>0.
 \end{align}
The center point, height and radius may be omitted if irrelevant. In this case $Q$ means an arbitrary cylinder of the above type.  
 
 The $(n+1)$-dimensional Lebesgue measure of a set $E\subset\RR^{n+1}$ is denoted as $|E|$.
For $f\in L^1(\Omega_T,\RR^N)$ and a measurable set $E\subset\Om_T$ with $0<|E|<\infty$, we denote the integral average of $f$ over $E$ as
\[
	(f)_{E}=\frac{1}{|E|}\iint_{E}f\,dz=\fiint_{E}f\,dz.
\]

\subsection{Main results}
Let $2\le p<q<\infty$. In this paper we consider energy estimates of weak solutions to parabolic problems of double-phase type,
\begin{align}\label{11}
	u_t-\dv\mA(z,\na u)=-\dv(|F|^{p-2}F+a(z)|F|^{q-2}F)\quad\text{in}\ \Omega_T,
\end{align}
where $\mA(z,\xi):\Omega_T\times \RR^{Nn}\longrightarrow \RR^{Nn}$ with $N\ge1$ is a Carath\'eodory vector field satisfying the structure assumptions
\begin{align}\label{12}
		\mA(z,\xi)\cdot \xi\ge \nu(|\xi|^p+a(z)|\xi|^q)
      \quad\text{and}\quad 
		|\mA(z,\xi)|\le L(|\xi|^{p-1}+a(z)|\xi|^{q-1})
\end{align}
for a.e. $z\in \Omega_T$ and every $\xi\in \RR^{Nn}$ with constants $0<\nu\le L<\infty$.
The source term $F:\Omega_T\longrightarrow\RR^{Nn}$ is a vector field satisfying
\[
	\iint_{\Om_T}H(z,|F|)\,dz<\infty.
\]
Throughout this paper we assume that the coefficient function $a:\Omega_T\longrightarrow\RR^+$ satisfies
\[
	q\le p+\frac{2\alpha}{n+2},\quad 0\le a\in C^{\alpha,\alpha/2}(\Omega_T)\quad\text{for some}\ \alpha\in(0,1].
\]
Here $a\in C^{\alpha,\alpha/2}(\Omega_T)$ means that $a\in L^{\infty}(\Omega_T)$ and there exists a constant $[a]_{\alpha,\alpha/2;\Omega_T}=[a]_\alpha>0$, such that
\begin{align}\label{15}
		|a(x,t)-a(y,t)|\le [a]_{\alpha,\alpha/2;\Omega_T}|x-y|^\alpha
  \quad\text{and}\quad 
		|a(x,t)-a(x,s)|\le [a]_{\alpha,\alpha/2;\Omega_T}|t-s|^\frac{\alpha}{2}
\end{align}
for every $x,y\in\Omega$ and $t,s\in (0,T)$. It is not clear whether the range for $q$ is optimal. However, considering the corresponding elliptic theory this condition with the parabolic scaling deficit $\tfrac{p}{2}$ seems natural.

\begin{definition}\label{def_weak}
A function $u\in C(0,T;L^2(\Omega,\RR^N))\cap L^1(0,T;W^{1,1}(\Omega,\RR^N))$ satisfying
\[
\iint_{\Omega_T}\left(H(z,|u|)+H(z,|\na u|)\right)\,dz <\infty
\]
is a weak solution to \eqref{11} if
\[
    \iint_{\Omega_T}\left(-u\cdot \varphi_t+\mA(z,\na u)\cdot \na\varphi\right)\,dz
    =\iint_{\Omega_T}\left( |F|^{p-2}F\cdot \na \varphi+a(z)|F|^{q-2}F\cdot \na \varphi\right)\,dz
\]
for every $\varphi\in C_0^\infty(\Omega_T,\RR^N)$.
\end{definition}

In general, the time derivative of a weak solution exists only in the distributional sense. 
Steklov averages are applied to mollify the function in the time direction. Let $f\in L^1(\Omega_T)$ and $0<h<T$. We define the Steklov average $[f]_h(x,t)$ for all $0<t<T$ as
\begin{align*}
	[f]_h(x,t)=
	\begin{cases}
		\fint_t^{t+h}f(x,s)\,ds,&0<t<T-h,\\
		0,&T-h\le t.
	\end{cases}
\end{align*}
We refer to \cite{MR1230384} for standard properties of the Steklov average. 
In particular we note that for any $f\in L^2(\Omega_T)$, we have $[f]_h\in W^{1,2}((0,T-h);L^2(\Omega))$ with
\[
	\pa_t[f]_h=\frac{f(x,t+h)-f(x,t)}{h}
\]
for $(x,t) \in \Omega_{T-h}$.
The proof of the following lemma can be found for example in \cite[Lemma 8.1]{MR4351758}.
\begin{lemma}\label{lem21}
	Let $f\in L^1(\Omega_T)$ and $h>0$. Then there exists a constant $c=c(n)$ such that
	\[
		\fiint_{Q_{r,s}(z_0)}[f]_h\,dz\le c\fiint_{[Q_{r,s}(z_0)]_h}f\,dz,
	\]
where $[Q_{r,s}(z_0)]_h=Q_{r,s+h}(z_0)$.
\end{lemma}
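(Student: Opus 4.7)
The proof is a direct application of Fubini's theorem together with the definition of the Steklov average. Since the averaging kernel $\tfrac{1}{h}\mathbf{1}_{[t,t+h]}$ is non-negative, the statement reduces to the case $f\ge 0$ (in the general case one applies the argument to $|f|$, or notes that the lemma is used with non-negative integrands in the sequel).

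First I would unfold the definition and write
\[
    \iint_{Q_{r,s}(z_0)}[f]_h\,dz
    =\int_{B_r(x_0)}\int_{t_0-s}^{t_0+s}\frac{1}{h}\int_{t}^{t+h}f(x,\tau)\,d\tau\,dt\,dx,
\]
where I tacitly set $[f]_h=0$ on $\{t\ge T-h\}$ as in the definition, so that contribution drops out and the estimate is preserved trivially. The factor $\frac{1}{h}$ is the only quantitative ingredient in the argument.

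Next I would interchange the two inner integrals by Fubini. For fixed $x$ the region $\{(t,\tau):t_0-s<t<t_0+s,\ t<\tau<t+h\}$, with $\tau$ taken as the outer variable, becomes $\tau\in(t_0-s,\,t_0+s+h)$ and $t\in(\max(\tau-h,t_0-s),\,\min(\tau,t_0+s))$. The key observation is that the latter $t$-interval has length at most $h$, which exactly cancels the $\tfrac{1}{h}$ prefactor. Hence
\[
    \iint_{Q_{r,s}(z_0)}[f]_h\,dz
    \le \int_{B_r(x_0)}\int_{t_0-s}^{t_0+s+h}f(x,\tau)\,d\tau\,dx
    \le \iint_{Q_{r,s+h}(z_0)}f\,dz,
\]
since $(t_0-s,\,t_0+s+h)\subset(t_0-(s+h),\,t_0+(s+h))$.

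Finally I would divide by $|Q_{r,s}(z_0)|$ and rewrite the right-hand side as an average over $[Q_{r,s}(z_0)]_h=Q_{r,s+h}(z_0)$. The ratio $|Q_{r,s+h}(z_0)|/|Q_{r,s}(z_0)|=(s+h)/s$ is absorbed into the constant $c$; in the regime $h\le s$ that is relevant for Steklov averaging arguments one may take $c=2$. There is essentially no obstacle here; the only delicate point is the accounting of the overlap in the Fubini step, but this is an elementary geometric computation.
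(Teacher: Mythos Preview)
The paper does not prove this lemma itself; it merely cites \cite[Lemma~8.1]{MR4351758}. Your Fubini argument is the standard one and is exactly what one finds in such references, so there is nothing to compare at the level of method.

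Your computation is correct, and your caveat about the constant is well taken. The Fubini swap gives
\[
\iint_{Q_{r,s}(z_0)}[f]_h\,dz\le \iint_{Q_{r,s+h}(z_0)}f\,dz
\]
for $f\ge0$, and after passing to averages the resulting constant is $|Q_{r,s+h}|/|Q_{r,s}|=(s+h)/s$, which is \emph{not} bounded by a dimensional constant unless $h\le cs$. The statement as written (with $c=c(n)$ and no restriction on $h$) is thus slightly imprecise; one should either read it under the tacit hypothesis $h\le s$ or allow the constant to depend on $h/s$. In the applications within the paper the Steklov parameter $h<h_0$ is eventually sent to zero and the estimates involving $[Q]_h$ (those for $v_h^\La$ as opposed to $v^\La$) are only used qualitatively, so this does not affect the main results; but you are right to flag it.
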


\begin{remark}\label{rem_testfcn}
The Steklov averages are applied in order to have
\begin{align*}
\begin{split}
    &\int_{\Omega\times \{t\}}\left(\pa_t[u]_h\cdot \varphi+ \left[|\na u|^{p-2}\na u+a|\na u|^{q-2}\na u\right]_h\cdot \na\varphi\right)\,dx\\
    &=\int_{\Omega\times \{t\}}\left[|F|^{p-2}F+a|F|^{q-2}F\right]_h\cdot \na\varphi\,dx
\end{split}
\end{align*}
for every $\varphi\in C_0^\infty(\Omega,\RR^N)$ and for a.e. $0<t<T-h$. 
To obtain an energy estimate, we would like to apply the solution itself as a test function in the definition of weak solutions. 
This is possible if the right-hand side of
\begin{align*}
        &\int_{\Omega\times \{t\}}\left|\left[|\na u|^{p-2}\na u+ a|\na u|^{q-2}\na u\right]_h \right|| \na \varphi|\,dx\\
        &\le \int_{\Omega\times \{t\}}\left(\left|\left[|\na u|^{p-2}\na u\right]_h\right||\na \varphi|+ \left|\left[a|\na u|^{q-2}\na u\right]_h\right|| \na \varphi|\right)\,dx
\end{align*}
is finite for test functions in the function space of the weak solution. The first term in the integral is finite from H\"older's inequality and the property of the Steklov average as in the $p$-Laplace system case ($a(\cdot)\equiv0$). However, if we write open the second term in the integral,
\[
    \int_{\Omega\times\{t\}}\biggl|\fint_{t}^{t+h}a(x,\tau)|\na u(x,\tau)|^{q-2}\na u(x,\tau)\,d\tau \biggr| | \na \varphi(x)|\,dx,
\]
and let $\varphi = [u]_h\zeta$ for any cutoff function $\zeta$ in $\Omega_T$, it becomes apparent that this is difficult to bound since $[u]_h(\cdot,t)$ does not have the same time variable as $a(\cdot)$. The same problem occurs also for the source term.
The Lipschitz truncation technique is applied to overcome this problem.
\end{remark}

Our first main result is an  energy estimate for weak solutions as in Definition~\ref{def_weak}.

\begin{theorem} \label{thm_cac}
    Assume that $u$ is a weak solution of \eqref{11}. Then there exists a constant $c=c(p,q,\nu,L)$ such that
    \begin{align*}
        \begin{split}
            &\sup_{t\in \ell_{S_1}(t_0)}\fint_{B_{R_{1}}(x_0)}\frac{|u-(u)_{Q_{R_1,S_1}(z_0)}|^2}{S_1}\,dx+\fiint_{Q_{R_1,S_1}(z_0)} H(z,|\na u|)\,dz\\
		&\quad\le c\fiint_{Q_{R_2,S_2}(z_0)}\left(H\left(z,\frac{|u-(u)_{Q_{R_2,S_2}(z_0)}|}{R_2-R_1}\right) + \frac{|u-(u)_{Q_{R_2,S_2}(z_0)}|^2}{S_2-S_1} + H(z,|F|)\right)\,dz
        \end{split}
    \end{align*}
    for every $Q_{R_2,S_2}(z_0)\subset \Omega_T$ with $R_2,S_2>0$, $R_1\in[R_2/2,R_2)$ and $S_1\in[S_2/2^2,S_2)$.
\end{theorem}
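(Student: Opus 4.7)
The plan is to follow the standard Caccioppoli recipe, testing the Steklov-averaged equation against a function of the form $v=(u-(u)_{Q_{R_2,S_2}(z_0)})\,\zeta^{q}\eta$ where $\zeta\in C_0^\infty(B_{R_2}(x_0))$ and $\eta\in C_0^\infty(\ell_{S_2}(t_0))$ are cutoffs satisfying $\zeta\equiv 1$ on $B_{R_1}(x_0)$, $\eta\equiv 1$ on $\ell_{S_1}(t_0)$, $|\na\zeta|\lesssim (R_2-R_1)^{-1}$ and $|\eta'|\lesssim (S_2-S_1)^{-1}$. As Remark~\ref{rem_testfcn} explains, however, the function $v$ is not directly admissible after Steklov averaging because $[a|\na u|^{q-2}\na u]_h$ and the $a(\cdot)$-factor hidden in $\na v$ are evaluated at incompatible times, so the $q$-part of the diffusion cannot be controlled by H\"older's inequality. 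The workaround is to replace $v$ by its Lipschitz truncation $v_h^\La$ provided by Proposition~\ref{prop_liptr1}, which is admissible at each truncation level $\La$, coincides with $v$ on a ``good set'' $E_\La$, and satisfies gradient bounds $|\na v_h^\La|\le c\La$ on the complement together with measure estimates on the bad set in terms of the maximal function of $\na u$.

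Once $v_h^\La$ is inserted into the Steklov-averaged weak formulation, I would split each resulting integral as an integral over $E_\La$ plus an integral over the bad set. On $E_\La$ the computation is standard: the time term produces, after integration by parts in $t$ and passage $h\to 0$, the parabolic $L^2$-term $\sup_t\fint_{B_{R_1}}|u-(u)_{Q_{R_2,S_2}}|^2/S_1\,dx$ together with an error of the form $(S_2-S_1)^{-1}\fiint|u-(u)_{Q_{R_2,S_2}}|^2\,dz$; the diffusion term gives the lower bound $\nu\,H(z,|\na u|)\zeta^q\eta$ thanks to \eqref{12}, minus lower-order terms where the derivative falls on $\zeta$, which are absorbed by Young's inequality with exponents $(p,p')$ and $(q,q')$ weighted by $a(z)$ to produce the Caccioppoli-type right-hand side involving $H(z,|u-(u)_{Q_{R_2,S_2}}|/(R_2-R_1))$; finally the source term contributes $H(z,|F|)$ after another Young's inequality.

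On the bad set the key is to show that the contribution vanishes in the limit $\La\to\infty$. For the diffusion terms, H\"older plus the bound $|\mA(z,\na u)|\le L(|\na u|^{p-1}+a(z)|\na u|^{q-1})$ and the Lipschitz bound $|\na v_h^\La|\le c\La$ reduces matters to showing that $\La\cdot|\text{bad set}|^{1/p'}$ times the relevant tail of $H(z,|\na u|)$ tends to zero, which is precisely the maximal function estimate built into Proposition~\ref{prop_liptr1}. The time term on the bad set is handled similarly, using that $v_h^\La-v$ is $L^2$-small as $\La\to\infty$. Letting $h\to 0$ after $\La\to\infty$ and using Lemma~\ref{lem21} to control Steklov averages by averages over slightly enlarged cylinders removes the mollification, and absorbing the $\nu H(z,|\na u|)$-term on the left via a standard small-multiple Young's inequality yields the claimed estimate.

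The main obstacle is the bookkeeping on the bad set: because the intrinsic geometry of the double-phase covering switches between $p$-intrinsic and $(p,q)$-intrinsic cylinders, the tails of the maximal function must be estimated simultaneously in both phases, and the $a(z)$-weight in the $q$-part of $\mA(z,\na u)\cdot\na v_h^\La$ must be traced carefully so that it pairs with the correct phase in Proposition~\ref{prop_liptr1}; everything else is, modulo these covering subtleties, a standard Caccioppoli argument.
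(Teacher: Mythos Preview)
Your overall strategy is exactly the paper's: test with the Lipschitz truncation $v_h^\La$ from Proposition~\ref{prop_liptr1}, split every integral into $E(\La)$ and $E(\La)^c$, run the standard Caccioppoli computation on the good set, and show the bad-set contribution vanishes. Two technical points, however, are stated incorrectly and as written would make the argument fail.

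First, the order of limits must be $h\to 0$ \emph{before} $\La\to\infty$, not after. Proposition~\ref{prop_liptr1} supplies two grades of estimate: items \ref{p4} and \ref{p7} bound $v_h^\La$ with constants $c_\La=c(\data,\La)$ depending on $\La$ in an uncontrolled way---these let you pass $h\to0$ by dominated convergence for fixed $\La$, but are useless for $\La\to\infty$. Only after $h\to0$ do the sharp bounds \ref{p5} and \ref{p6} for $v^\La$ become available, in particular $H(z,|\na v^\La|)\le c\La$ with $c$ independent of $\La$, and it is these, together with \eqref{convergence}, that drive the bad-set terms to zero. Your stated bound ``$|\na v_h^\La|\le c\La$'' conflates the two grades; for $v_h^\La$ there is no such bound with $c$ independent of $\La$.

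Second, the mechanism you give on the bad set does not converge: only $\La|E(\La)^c|\to 0$ is available (see \eqref{convergence}), and since $1/p'<1$ this does not imply $\La|E(\La)^c|^{1/p'}\to 0$; indeed if $|E(\La)^c|\sim \La^{-1}$ then $\La|E(\La)^c|^{1/p'}\sim\La^{1/p}\to\infty$. The paper instead applies Young's inequality to the bad-set diffusion term, producing $\iint_{E(\La)^c}H(z,|\na u|)\,dz$ (which vanishes by absolute continuity of the integral) plus $\iint_{E(\La)^c}H(z,|\na v^\La|)\,dz\le c\La|E(\La)^c|$ (which vanishes by \ref{p6} and \eqref{convergence}). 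A H\"older route can be made to work too, but the correct controlling factor is $(\La|E(\La)^c|)^{1/p}$, not $\La|E(\La)^c|^{1/p'}$.
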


\begin{remark}\label{rem_hi}
Theorem~\ref{thm_cac} and \cite[Remark 2.2]{KKM} imply that \cite[Theorem 1.1]{KKM} holds for
weak solutions \eqref{11} as in Definition~\ref{def_weak}.
In \cite{KKM} the result was proved under a technical assumption $|\na u|\in L^q(\Om_T)$.
\end{remark}

We discuss existence and uniqueness results for the Dirichlet problem
\begin{align}\label{e2}
\begin{cases}
    u_t-\dv \mathcal{A}(z,\na u)=-\dv(|F|^{p-2}F+a(z)|F|^{q-2}F)&\quad\text{in}\  C_{R}\\
    u=g&\quad\text{on}\ \pa_p C_{R},
\end{cases}
\end{align}
where $\mathcal{A}(z,\xi)=b(z)(|\xi|^{p-2}\xi+a(z)|\xi|^{q-2}\xi)$ and $b(z):C_R\longrightarrow\RR^+$ is a measurable function satisfying the ellipticity condition
\begin{align}\label{d9}
    \nu\le b(z)\le L
    \quad\text{for a.e.}\ z\in  C_R,
\end{align}
for some constants $0<\nu\le L<\infty$.
Here $ C_R=D_{R}\times I_R$ and $\pa_p C_R$ is the parabolic boundary defined as the union of $D_R\times\{-R^2\}$ and $\pa D_R\times (-R^2,R^2)$.
Moreover, $g\in C^\infty(\RR^{n+1},\RR^N)$ and $u=g$ on $\pa_p C_R$ means that
\[
 u-g\in L^p(I_R,W_0^{1,p}(D_R,\RR^N))
 \quad\text{and}\quad
 \lim_{\ep\to0^+}\fint_{-R^2}^{-R^2+\ep}\int_{D_R}|u-g|^2\,dx\,dt=0.
\]

Assume that $u$ and $w$ are weak solutions of \eqref{e2}. Then 
\begin{align}\label{e5}
    (u-w)_t-\dv (\mathcal{A}(z,\na u)-\mathcal{A}(z,\na w))=0\quad\text{in}\ C_{R}
\end{align}
and $u-w=0$ on $\pa_p C_{R}$ in the sense that on the lateral boundary we have
\begin{align}\label{e7}
    u-w\in L^p(I_R;W_0^{1,p}(D_R,\RR^N)),
\end{align}
and on the initial boundary we have
\begin{align}\label{e8}
    \lim_{\ep\to0^+}\fint_{-R^2}^{-R^2+\epsilon}\int_{D_R}|u-w|^2\,dx\,dt=0.
\end{align}
In the case of the parabolic $p$-Laplace system $(a(\cdot)\equiv0)$, $u-w$ is an admissible test function for \eqref{e5} and the uniqueness follows in a straightforward manner. However, $u-w$ may not be an admissible test function when $a(\cdot)\not\equiv0$. Again, we will apply the Lipschitz truncation method to settle this problem. The uniqueness result is stated in the following theorem.
\begin{theorem}\label{main2}
Assume that $u$ and $w$ are weak solutions to the Dirichlet boundary problem \eqref{e2}. Then $u= w$ a.e. in $ C_R$.
\end{theorem}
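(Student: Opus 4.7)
The plan is to test the Steklov-averaged difference equation for $v=u-w$ against a Lipschitz truncation supplied by Proposition~\ref{uni_prop}. Subtracting the weak formulations of \eqref{e2} for $u$ and $w$ and averaging in time yields, for a.e.\ $t\in I_R$,
\begin{equation*}
\int_{D_R\times\{t\}}\bigl(\pa_t[v]_h\cdot\varphi+[\mathcal{A}(\cdot,\na u)-\mathcal{A}(\cdot,\na w)]_h\cdot\na\varphi\bigr)\,dx=0
\end{equation*}
for every $\varphi\in C_0^\infty(D_R,\RR^N)$, with the homogeneous initial and lateral boundary data \eqref{e7}, \eqref{e8}. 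The natural choice $\varphi=[v]_h$ fails to be admissible for the reason explained in Remark~\ref{rem_testfcn}: the Steklov-averaged $q$-growth flux cannot be paired with $\na[v]_h$ in a controllable way, since $a(\cdot)$ and $[v]_h$ live on different time slices. I therefore replace $[v]_h$ by its Lipschitz truncation $([v]_h)^\lambda$ from Proposition~\ref{uni_prop}, which agrees with $[v]_h$ outside a bad set $E_\lambda$ and has gradient controlled by $\lambda$ in the intrinsic double-phase geometry.

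Testing with $\varphi=\eta(t)([v]_h)^\lambda$ for a smooth time cutoff $\eta$ vanishing at $t=-R^2$ and concentrating on $(-R^2,\tau)$, the time derivative term produces $\tfrac12\int_{D_R\times\{\tau\}}|[v]_h|^2\,dx$ up to a remainder supported on $E_\lambda$. The divergence term splits into a good-set contribution, on which $\na([v]_h)^\lambda=\na[v]_h$, and a bad-set remainder bounded by the product of the Lipschitz constant $\lambda$ and the Steklov-averaged flux integrated over $E_\lambda$. The measure and energy decay for $E_\lambda$ provided by Proposition~\ref{uni_prop}, together with the integrability $\iint_{C_R}(H(z,|\na u|)+H(z,|\na w|))\,dz<\infty$ guaranteed by Theorem~\ref{thm_cac}, drive the bad-set term to zero as $\lambda\to\infty$. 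On the good set I use the monotonicity of $\mathcal{A}$: since $2\le p<q$ and $b\ge\nu$, the standard inequality $(|\xi|^{s-2}\xi-|\zeta|^{s-2}\zeta)\cdot(\xi-\zeta)\ge c_s|\xi-\zeta|^s$ for $s\ge 2$ gives
\begin{equation*}
(\mathcal{A}(z,\na u)-\mathcal{A}(z,\na w))\cdot\na v \ge c(p,q)\,\nu\,H(z,|\na v|).
\end{equation*}
Passing $h\to 0$ and combining the resulting estimates then yields
\begin{equation*}
\tfrac12\int_{D_R\times\{\tau\}}|v|^2\,dx + c\iint_{D_R\times(-R^2,\tau)}H(z,|\na v|)\,dz\le 0
\end{equation*}
for a.e.\ $\tau\in I_R$, which forces $v=0$ a.e.\ in $C_R$.

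The main obstacle is the quantitative control of the bad set. The Lipschitz bound on $\na([v]_h)^\lambda$ is intrinsic: the scaling depends on whether the center point sits in the $p$-phase or the $(p,q)$-phase, and on the local value of $a(\cdot)$. Matching these scalings against $H(\cdot,|\na v|)$ in the bad-set integral, and ordering the limits $\lambda\to\infty$ and $h\to 0$ so that every error term truly absorbs, is precisely what Proposition~\ref{uni_prop} — built on the intrinsic Whitney covering of Proposition~\ref{prop_whitney} — is designed to deliver.
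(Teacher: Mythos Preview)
Your proposal is correct and follows essentially the same approach as the paper: test the Steklov-averaged difference equation with the Lipschitz truncation $v_h^\Lambda$ from Proposition~\ref{uni_prop}, use the monotonicity \eqref{e3} on the good set, and show the bad-set remainders vanish. Two small points to tighten: the limits must be taken in the order $h\to 0^+$ first and then $\Lambda\to\infty$ (the $\Lambda$-independent bounds (vi) and (viii) in Proposition~\ref{uni_prop} are only available after the Steklov average is removed, whereas the $h$-versions (vii) and (ix) carry constants $c_\Lambda$), and the integrability $H(\cdot,|\nabla u|),H(\cdot,|\nabla w|)\in L^1(C_R)$ is part of Definition~\ref{def_weak}, not a consequence of Theorem~\ref{thm_cac}.
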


Then we discuss the existence of a weak solution to the Dirichlet problem above. 
In order to be able to apply the estimates in \cite{CZ}, we assume that the coefficient $b(\cdot)$ satisfies a VMO condition
\begin{align}\label{la5}
    \lim_{r\to 0^+}\sup_{|I|\le r^2}\sup_{x_0\in D_R}\fint_{I\cap I_R}\fint_{B_r(x_0)\cap D_R} |b(x,t)-(b)_{(B_r(x_0)\times I)\cap  C_R}|\,dx\,dt=0.
\end{align}
 
\begin{theorem}\label{main3}
Assume that $g\equiv 0$ and that \eqref{la5} holds true.
Then there exists a weak solution to the Dirichlet problem \eqref{e2}. 
Moreover, there exists a sequence $(u_l)_{l\in \mathbb{N}}$ of weak solutions  $u_l\in L^q(I_R;W_0^{1,q}(D_R,\RR^N))$, $l\in\mathbb{N}$, to the problem
\begin{align*}
    \begin{cases}
        \pa_tu_l-\dv \mA_l(z,\na u_l)=-\dv \left(|F_l|^{p-2}F_l+a_l(z)|F_l|^{q-2}F_l\right)&\text{in}\ C_R\\
        u_l=0&\text{on}\ \pa_p C_R,
    \end{cases}
\end{align*}
 such that
\[
    \lim_{l\to\infty}\iint_{ C_R}H(z,|\na u-\na u_l|)\,dz=0.
\]
Here $(F_l)_{l\in \mathbb{N}}$ is a sequence of truncations $F_l\in L^\infty( C_R,\RR^{Nn})$, $l\in \mathbb{N}$, of $F$ satisfying
\[
    \lim_{l\to\infty}\iint_{ C_R}H(z,|F-F_l|)\,dz=0,
\]
and $\mA_l$ is a perturbed $q$-Laplace operator from $\mA$, defined with a positive and decreasing sequence $(\ep_l)_{l\in\mathbb{N}}$ as
\[
    \mA_l(z,\xi)=b(z)\left(|\xi|^{p-2}\xi+a_l(z)|\xi|^{q-2}\xi\right),\quad a_l(z)=a(z)+\ep_l,\quad \lim_{l\to\infty}\ep_l=0.
\]
\end{theorem}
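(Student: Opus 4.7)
The plan is to construct $u$ as the limit of weak solutions $u_l$ to the regularized problems. Since $a_l=a+\ep_l\ge\ep_l>0$, the operator $\mA_l$ is non-degenerate with $q$-growth, so classical monotone-operator existence theory applies; the work lies in obtaining uniform bounds and passing to the limit. I would begin by choosing the truncations $F_l=F\chi_{\{|F|\le M_l\}}$ with $M_l\nearrow\infty$ and the perturbation parameters $\ep_l\searrow 0$ coupled so that $\ep_l M_l^q\to 0$. Dominated convergence then gives $\iint_{C_R}H(z,|F-F_l|)\,dz\to 0$, while the coupling yields $\iint_{C_R}H_l(z,|F_l|)\,dz\le c_0$ uniformly in $l$. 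For each fixed $l$, a Galerkin or pseudomonotonicity argument in $L^q(I_R;W_0^{1,q}(D_R,\RR^N))$ produces a weak solution $u_l$ in that space which is also continuous into $L^2(D_R,\RR^N)$. Testing the equation with $u_l$ itself (admissible by the $q$-regularity and the homogeneous boundary data) yields
\[
\sup_{t\in I_R}\int_{D_R}|u_l(x,t)|^2\,dx + \iint_{C_R}H_l(z,|\na u_l|)\,dz \le c c_0,
\]
uniformly in $l$, and the VMO assumption \eqref{la5} on $b$ triggers the Calderón–Zygmund estimates from \cite{CZ} to produce a uniform higher integrability bound $\iint_{C_R}H(z,|\na u_l|)^{1+\de}\,dz\le c_1$ for some $\de>0$.

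From the equation I control $\pa_t u_l$ in a negative Sobolev norm, and Aubin–Lions compactness produces, along a subsequence, the strong convergence $u_l\to u$ in $L^p(C_R)$, weak convergences $\na u_l\rightharpoonup \na u$ in $L^p(C_R)$ and $a^{1/q}\na u_l\rightharpoonup a^{1/q}\na u$ in $L^q(C_R)$, and $\mA_l(z,\na u_l)\rightharpoonup \chi$ in the appropriate dual space thanks to the equi-integrability inherited from the higher integrability. To identify $\chi=\mA(z,\na u)$ I would run a Minty–Browder monotonicity argument: for every smooth $\vp$ vanishing on $\pa_p C_R$, the inequality
\[
\iint_{C_R}\bigl(\mA_l(z,\na u_l)-\mA_l(z,\na\vp)\bigr)\cdot(\na u_l-\na\vp)\,dz\ge 0
\]
combines with the weak equation for $u_l$ and passes to the limit once I verify that $\ep_l\iint_{C_R}|\na u_l|^{q-1}\bigl(|\na\vp|+|\na u_l|\bigr)\,dz\to 0$, which follows from the energy bound $\ep_l\|\na u_l\|_{L^q(C_R)}^q\le c_0$ together with Hölder's inequality. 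Hemicontinuity with $\vp=u\pm s\psi$, $s\to 0$, then forces $\chi=\mA(z,\na u)$, so $u$ is a weak solution of \eqref{e2}.

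Finally, to promote these weak convergences to the $H$-strong gradient convergence $\iint_{C_R}H(z,|\na u-\na u_l|)\,dz\to 0$, I would test the equation for $u_l-u$ against itself, which is not directly admissible but is legalized by the Lipschitz truncation of Proposition~\ref{uni_prop}, and then exploit the pointwise lower bound $(\mA(z,\xi)-\mA(z,\eta))\cdot(\xi-\eta)\ge cH(z,|\xi-\eta|)$ combined with Vitali's theorem applied to the uniformly higher-integrable family $\{H(z,|\na u_l|)\}$. The uniqueness result Theorem~\ref{main2} then promotes subsequential to full-sequence convergence. The main obstacle is exactly this last step: the non-admissibility of $u_l-u$ as a test function for the double-phase structure is the very difficulty that motivates the paper, and it is only by applying the Lipschitz truncation machinery of Proposition~\ref{uni_prop} in place of a direct test that the $H$-strong convergence of gradients can be extracted.
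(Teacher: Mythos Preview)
Your outline captures the spirit of an approximation argument but departs from the paper's proof in a way that introduces a real gap and misses its key simplification.

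The paper does \emph{not} use Minty--Browder or the Lipschitz truncation of Proposition~\ref{uni_prop} in the existence proof. Instead it runs a \emph{two-parameter} approximation: for each fixed truncation level $k$ one first lets the perturbation $\epsilon\to 0$, and only afterwards sends $k\to\infty$. The point is that for fixed $k$ the source $F^k$ is bounded, so the Calder\'on--Zygmund estimate from \cite{CZ} yields $\|\nabla u^k_\epsilon\|_{L^q(C_R)}\le c_k$ uniformly in $\epsilon$ (with $c_k$ depending on $k$). Passing to the limit $\epsilon\to 0$ one obtains $u^k\in L^q(I_R;W^{1,q}_0(D_R,\RR^N))$ for every $k$; this $L^q$-membership is the crucial output, even though the bound $c_k$ blows up as $k\to\infty$. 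Since now \emph{all} the $u^k$ live in $L^q(I_R;W^{1,q}_0)$, the difference $u^k-u^{k'}$ is an admissible test function in the subtracted equation, and the monotonicity \eqref{e3} gives directly that $(\nabla u^k)_k$ is Cauchy in the $H$-norm. No Lipschitz truncation, no Minty trick, no equi-integrability argument is needed; the limit $u$ is identified by strong convergence of gradients, and a final diagonal extraction produces the sequence $(u_l)$.

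Your single-diagonal approach runs into the following concrete problem: the claimed uniform bound $\iint_{C_R}H(z,|\nabla u_l|)^{1+\delta}\,dz\le c_1$ is not delivered by \cite{CZ}. That result requires $H_{\epsilon_l}(z,|F_l|)\in L^{q/p}$ and its constant depends on this norm, which blows up as $M_l\to\infty$ because you only assume $H(z,|F|)\in L^1$. Without this uniform higher integrability your Vitali step for strong $H$-convergence collapses. You could try to repair the endgame via a Lipschitz-truncation argument as you suggest, but Proposition~\ref{uni_prop} is tailored to two solutions of the \emph{same} equation, so you would have to redo that machinery with the extra source and $\epsilon_l$-perturbation terms; the paper's two-step route sidesteps all of this by ensuring the comparison functions always sit in $L^q$.
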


\section{Whitney decomposition}
In this section we provide a Whitney covering result in Proposition~\ref{prop_whitney} which is applied in the Lipschitz truncation method for parabolic double-phase problems in Section~\ref{sec_liptr}. The classical Whitney covering lemma decomposes an open set $U$ into a family of open sets $\{U_i\}_{i\in\mathbb{N}}$ according to a given metric $d(\cdot,\cdot)$. It has been employed for parabolic $p$-Laplace systems with the parabolic $p$-Laplace metric to prove the Lipschitz truncation method, see \cite{MR4351758,MR1948889}. We extend this by decomposing $U$ into $\{U_i\}_{i\in\mathbb{N}}$ according to pointwise metrics depending on the coefficient $a$.

The decomposition is based on the strong maximal function defined for $f\in L^1_{\loc}(\RR^{n+1})$ as
\begin{align} \label{strong_M}
	Mf(z)=\sup_{z\in Q}\fiint_{Q} |f|\,dw,
\end{align}
where the supremum is taken over all parabolic cylinders of the type in \eqref{def_ell} and satisfying $z\in Q$. 
It follows immediately that
\[
	Mf(z)\le \sup_{t\in (t_1,t_2)} \fint_{t_1}^{t_2} \sup_{x\in B}\fint_{B} |f|\,dy \,ds =M_t(M_x(|f|))(z),
\]
where $M_x$ and $M_t$ are the noncentered Hardy--Littlewood maximal operators with respect to space and time. 
By repeated application of the Hardy-Littlewood-Wiener maximal function theorem we obtain the following bound for the strong maximal function.
\begin{lemma}\label{str}
Let $1<\sig<\infty$ and $f\in L^{\sigma}(\RR^{n+1})$. Then there exists a constant $c=c(n,\sig)$ such that
\[
	\iint_{\RR^{n+1}}|Mf|^{\sig}\,dz\le c\iint_{\RR^{n+1}}|f|^\sig\,dz.
\]
\end{lemma}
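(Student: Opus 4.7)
The plan is to establish the lemma by reducing the strong maximal function $M$ to an iterated composition of one-variable non-centered maximal operators and then applying the classical Hardy--Littlewood--Wiener theorem in each variable separately. The key observation is the pointwise bound $Mf(z)\le M_t(M_x|f|)(z)$ already noted in the excerpt, which turns an $(n+1)$-dimensional sup over all parabolic cylinders $Q_{r,s}(z_0)$ into two successive one-parameter sups that are amenable to the standard maximal function theory.

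First I would fix $x\in\RR^n$ and apply the one-dimensional Hardy--Littlewood--Wiener theorem to the function $t\mapsto M_x|f|(x,t)$, obtaining
\[
	\int_{\RR}|M_t(M_x|f|)(x,t)|^{\sig}\,dt\le c(\sig)\int_{\RR}|M_x|f|(x,t)|^{\sig}\,dt.
\]
Integrating in $x$ and invoking Fubini's theorem brings me to the $L^\sig$-norm of $M_x|f|$ over $\RR^{n+1}$. Next, for each fixed $t\in\RR$ I would apply the $n$-dimensional Hardy--Littlewood--Wiener theorem to $x\mapsto |f(x,t)|$, yielding
\[
	\int_{\RR^n}|M_x|f|(x,t)|^{\sig}\,dx\le c(n,\sig)\int_{\RR^n}|f(x,t)|^{\sig}\,dx.
\]
Integrating this estimate in $t$ and combining it with the previous step produces the desired bound with constant $c=c(n,\sig)$.

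The argument is essentially routine and I do not anticipate any serious obstacle. The only minor technicalities are to confirm measurability of $(x,t)\mapsto M_x|f|(x,t)$ so that Fubini applies in both steps (which follows from standard semicontinuity properties of non-centered Hardy--Littlewood maximal functions), and to justify the pointwise domination $Mf(z)\le M_t(M_x|f|)(z)$ by estimating $\fiint_Q|f|\,dw$ first by the spatial supremum on each time slice and then by the temporal supremum over $\ell_s(t_0)$, as the excerpt sketches. With these ingredients the lemma follows directly.
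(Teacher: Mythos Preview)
Your proposal is correct and follows exactly the approach indicated in the paper: the pointwise domination $Mf(z)\le M_t(M_x|f|)(z)$ followed by repeated application of the Hardy--Littlewood--Wiener maximal function theorem, first in $t$ and then in $x$ (or vice versa). The paper does not spell out more detail than this, so your write-up is already at least as complete as the original.
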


\subsection{Initial definitions}\label{subsec_app_1}
We start by extending $a$ to $\RR^{n+1}$. 
Since $s\mapsto s^\alpha$ is a concave and subadditive function on $\RR^+$, the extended function 
\begin{align} \label{extended_a}
	\tilde{a}(z)=\inf_{w=(y,s)\in \Om_T}\left\{a(w)+[a]_{\alpha}(|x-y|^\alpha+|t-s|^{\frac{\alpha}{2}})\right\}, 
 \quad z=(x,t)\in\RR^{n+1},
\end{align}   
belongs to $C^{\alpha,\alpha/2}(\RR^{n+1})$ (see \cite[Theorem 2.7]{MR4306765} for the details).
With slight abuse of notation we denote $a(z) = \min\{\Tilde{a}(z),\|a\|_{L^\infty(\Omega_T)}\}$. Note that $\|a\|_{L^\infty(\mathbb{R}^{n+1})}=\|a\|_{L^\infty(\Omega_T)}$. 

Let $f\in L^p(\RR^{n+1})$, $f\geq 0$,  be such that
\begin{align}\label{t_31}
	\iint_{\RR^{n+1}}\left(f^p+af^q\right)\,dz<\infty,
\end{align}
and let $d=\tfrac{q-1+p}{2}$. By Lemma~\ref{str} there exists $\La_0>1+\|a \|_{L^\infty(\mathbb{R}^{n+1})}$ such that
\[
	\iint_{\RR^{n+1}}\left(M(f^d+(af^q)^\frac{d}{p})(z)\right)^\frac{p}{d}\,dz
 \le c(n,p,q) \iint_{\RR^{n+1}}\left(f^p+af^q\right)\,dz\le\La_0.
\]
Let $\La>\La_0$. We consider the set
\begin{align}\label{def_E}
	E(\La)=\left\{z\in \RR^{n+1}:M(f^d+(af^q)^\frac{d}{p})(z)\le\La^\frac{d}{p}\right\},
\end{align}
where $M(\cdot)$ is the strong maximal operator defined in \eqref{strong_M}.
Since the maximal function is lower semicontinuous, $E(\La)^c=\RR^{n+1}\setminus E(\La)$
is an open subset of $\RR^{n+1}$. Moreover, Chebyshev's inequality implies that
\begin{align} \label{convergence}
\lim_{\La \to \infty}\Lambda|E(\La)^c| 
\le\lim_{\La \to \infty}\iint_{E(\La)^c}\left(M(f^d+(af^q)^\frac{d}{p})(z)\right)^\frac{p}{d}\,dz
= 0.
\end{align}

Assuming $E(\La)^c\ne\emptyset$, we construct a Whitney decomposition of $E(\La)^c$. Since $s\mapsto s^p+a(z)s^q$ is an increasing function on $\mathbb{R}^+$, for every $z\in E(\La)^c$  there exists a unique $\la_z>1$ such that $\la_z^p+a(z)\la_z^q=\La$.
Let 
\begin{align}\label{app9}
	K=2^\frac{1}{2}+800[a]_\alpha \left(\frac{1}{|B_1|}\iint_{\RR^{n+1}}\left(M(f^d+(af^q)^\frac{d}{p})(z)\right)^\frac{p}{d}\,dz\right)^\frac{\alpha}{n+2}.
\end{align}
We consider a family of metrics $\{d_z(\cdot,\cdot)\}_{z\in E(\La)^c}$ given by
\begin{align*}
    d_z(z_1,z_2)=
    \begin{cases}
        \max\left\{|x_1-x_2|,\sqrt{\la_z^{p-2}|t_1-t_2|}\right\},&\text{if}\ K^2\la_z^p\ge a(z)\la_z^q,\\
        \max\left\{|x_1-x_2|,\sqrt{\La\la_z^{-2}|t_1-t_2|}\right\},&\text{if}\ K^2\la_z^p< a(z)\la_z^q.
    \end{cases}
\end{align*}
For every $z\in E(\La)^c$ we define a distance from $z$ to $E(\La)$ as
\begin{align}\label{app6}
	4r_z= d_z(z,E(\La))=\inf_{w\in E(\La)}d_z(z,w).
\end{align}
Using this distance, we construct a family of open subsets $\{U_z^\La\}_{z\in E(\La)^c}$ defined as
\begin{equation*}
	U_z^{\La}=
 \begin{cases}
 Q^{\la_z}_z,&\text{if}\ K^2\la_z^p\ge a(z)\la_z^q,\\
G^{\la_z}_z, &\text{if}\ K^2\la_z^p< a(z)\la_z^q,
\end{cases}
\end{equation*}
where $Q^{\la_z}_z=B_{r_z}(x)\times I^{\la_z}_{r_z}(t)$ is a $p$-intrinsic cylinder and 
$G^{\la_z}_z=B_{r_z}(x)\times J^{\la_z}_{r_z}(t)$ is a $(p,q)$-intrinsic cylinder.
The case $K^2\la_z^p\ge a(z)\la_z^q$ is called the $p$-intrinsic case and $K^2\la_z^p<a(z)\la_z^q$ the $(p,q)$-intrinsic case.

In the rest of this section we use a Vitali type argument to find a countable collection of points $z_i \in E(\La)^c$ such that the corresponding subfamily $\{U^\La_{z_i}\}_{i\in\mathbb{N}}$ satisfies the properties in Proposition~\ref{prop_whitney}. The proof of the proposition is divided into Lemmas~\ref{app:lem1} - \ref{app:lem10}. Here, and in the remaining of the paper, we denote 
\[
	\la_i=\la_{z_i},\quad r_i=r_{z_i},\quad d_i(\cdot,\cdot)=d_{z_i}(\cdot,\cdot),\quad Q_i=Q_{z_i}^{\la_i},\quad G_i=G_{z_i}^{\la_i},
\]
and
\begin{equation}\label{def_Ui_37}
U_i  = B_i \times I_i = 
 \begin{cases} 
 Q_i,&\text{if}\ K^2\lai^p \geq a(z_i)\lai^q,\\
 G_i,&\text{if}\ K^2\lai^p < a(z_i)\lai^q.
 \end{cases}
\end{equation}
Moreover, we write 
\begin{align}\label{def_I}
d_i(U_i,E(\La))=\inf_{z\in U_i,w\in E(\La)}d_i(z,w), \quad \mathcal{I}_i=\{j\in\mathbb{N}: 2U_i\cap 2U_j\ne\emptyset\} 
\end{align}
and
\begin{align}\label{def_Ki}
K_i =
\begin{cases}
    50K^2 &\text{if}\ U_i=Q_i,\\
    50K&\text{if}\ U_i=G_i.
\end{cases}
\end{align}

\begin{proposition} \label{prop_whitney} 
Let $K$ be as  in \eqref{app9} and $E(\La)$ as in \eqref{def_E}. There exists a collection $\{U_{i}\}_{i\in\mathbb{N}}$ of cylinders defined as in \eqref{def_Ui_37} satisfying the following properties:
	\begin{enumerate}[label=(\roman*),series=theoremconditions]
		\item\label{i} $\cup_{i\in\mathbb{N}}U_i=E(\La)^c$.
		\item\label{ii} $\tfrac{1}{6K}U_i\cap\tfrac{1}{6K}U_j= \emptyset$ for every $i,j\in \mathbb{N}$ with $i\ne j$.
		\item\label{iii} $3r_i\le d_i(U_i,E(\La))\le 4r_i$ for every $i\in \mathbb{N}$.
		\item\label{iv} $4U_i\subset E(\La)^c$ and $5U_i\cap E(\La)\ne\emptyset$ for every $i\in \mathbb{N}$.
		\item\label{v} $(12K)^{-1}r_j\le r_i\le 12Kr_j$ for every $i\in \NN$, $j\in \mathcal{I}_i$.
		\item\label{vi}$K^{-\frac{2}{p}}\la_j\le \la_i\le K^\frac{2}{p}\la_j$ for every $j\in \mathcal{I}_i$. Moreover, if $U_i=G_i$, then $2^{-\frac{1}{p}}\la_j\le \la_i\le 2^\frac{1}{p}\la_j$.
  \item\label{Uij}There exists a constant $c = c(n,K)$ such that $|U_i|\leq c|U_j|$, for every $i\in \mathbb{N}$ and $j\in \mathcal{I}_i$.
 \item\label{vii} $2U_j \subset K_iU_i$ for every $i\in \NN$, $j\in \mathcal{I}_i$.
 
      \item\label{UUi} If $U_i=Q_i$, then there exists a constant $c=c([a]_\alpha,\alpha,K)$ such that $r_i^\alpha\la_i^q\le c\la_i^p$.

		\item\label{viii} If $U_i=G_{i}$, then 
			$\tfrac{a(z_i)}{2}\le a(z)\le 2a(z_i)$ for every $z\in 200K Q_{r_i}(z_i)$.
		\item\label{ix} For any $i\in\mathbb{N}$, the cardinality of $\mathcal{I}_i$, denoted by $|\mathcal{I}_i|$, is finite. Moreover, there exists a constant $c= c(n,K)$, such that $|\mathcal{I}_i|\le c$.
	\end{enumerate}
	Additionally, there exists a partition of unity $\{\om_i\}_{i\in\mathbb{N}}$ subordinate to the Whitney decomposition $\{2U_i\}_{i\in \mathbb{N}}$  with the following properties:
	\begin{enumerate}[resume*=theoremconditions]
		\item\label{x} $0\le \om_i\le 1$, $\om_i\in C_0^\infty(2U_i)$ for every $i\in\mathbb{N}$ and $\sum_{i\in\mathbb{N}}\om_i=1$ on $E(\La)^c$.
		\item \label{xi}There exists a constant $c=c(n,K)$ such that $\lVert\na \om_j\rVert_{\infty}\le cr_i^{-1}$,
   for every $i\in\mathbb{N}$ and $j\in \mathcal{I}_i$.
		\item\label{xii} There exists a constant $c=c(n,K)$ such that 
  \begin{align*}
			\lVert \pa_t\om_j\rVert_{\infty}\le 
			\begin{cases}	
    cr_i^{-2}\la_i^{p-2},&\text{if}\ U_i=Q_i,\\
           cr_i^{-2}\la_i^{-2}\La,&\text{if}\ U_i=G_i,
           \end{cases}
		\end{align*}
  for every $i\in\mathbb{N}$ and $j\in \mathcal{I}_i$.
	\end{enumerate}

\end{proposition}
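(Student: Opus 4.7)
The plan is to build the family $\{U_i\}_{i\in\mathbb{N}}$ by a Vitali-type selection from $\{U_z^\La\}_{z\in E(\La)^c}$ and then verify each listed property. The main subtlety is that the metric $d_z$ depends on the basepoint $z$, so the classical Vitali lemma has to be adapted. Since $|E(\La)^c|<\infty$ by Chebyshev as in \eqref{convergence}, the distances $r_z$ in \eqref{app6} are bounded uniformly from above. I would sort the cylinders into dyadic layers $\{z:2^{-k-1}\le r_z<2^{-k}\}$ and pick greedily starting from the largest layer, keeping a candidate $z_i$ whenever $\tfrac{1}{6K}U_{z_i}$ is disjoint from the previously chosen ones. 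The scaling factor $\tfrac{1}{6K}$, rather than a universal constant, is chosen to absorb the discrepancy between the different metrics $d_{z_i}$ and $d_{z_j}$. Property \ref{ii} follows at once; \ref{i} follows from the maximality of the selection by a standard $5r$-type swelling argument, while \ref{iii} and \ref{iv} are built into the definition of $r_z$.

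The main obstacle is the comparability statements \ref{v}, \ref{vi}, and \ref{viii}, because two neighbouring cylinders $U_i,U_j$ with $j\in\mathcal{I}$ may belong to different phases and be measured with different metrics. The strategy is to compare $r_j$ to $r_i$ by evaluating the relevant metric at a point in $2U_i\cap 2U_j$, and then translate this into a comparison of $\la_j$ to $\la_i$ via the defining equation $\la_z^p+a(z)\la_z^q=\La$. The constant $K$ from \eqref{app9} is designed precisely for this: combining H\"older continuity of $a$ with the maximal-function bound inside \eqref{def_E} forces $|a(z_i)-a(z_j)|\le \tfrac{1}{2}a(z_i)$ as soon as at least one of the cylinders is in the $(p,q)$-intrinsic case, because the displacement $d_i(z_i,z_j)$ is a small multiple of $r_i$ and the definition of $K$ converts this into a fraction of $a(z_i)$. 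Once the $a$-values at $z_i$ and $z_j$ are comparable, the equation for $\la_z$ yields comparability of $\la_i$ and $\la_j$, and consequently of the time-scales $\la^{2-p}r^2$ and $\la^2 r^2/H(z,\la)$, closing \ref{v}, \ref{vi} and \ref{viii}.

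The remaining geometric properties are consequences of \ref{iii}, \ref{v} and \ref{vi}. Property \ref{Uij} follows by direct computation of the intrinsic measures, \ref{vii} is an inclusion verified through the definition \eqref{def_Ki} of $K_i$ — the factor $50K^2$ in the $p$-intrinsic case and $50K$ in the $(p,q)$-intrinsic case is exactly what absorbs the worst mismatch between $d_i$ and $d_j$. Property \ref{UUi} uses the constraint $q\le p+\tfrac{2\al}{n+2}$ together with the $\Lambda$-independent contribution in \eqref{app9} to convert a power of $r_i$ into a power of $\la_i$. Property \ref{ix} is a counting argument: the disjoint sets $\tfrac{1}{6K}U_j$, $j\in\mathcal{I}$, all lie inside a cylinder of measure comparable to $|U_i|$ by \ref{vii} and \ref{Uij}, so $|\mathcal{I}|$ is uniformly bounded.

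Finally, the partition of unity $\{\om_i\}$ is built in the usual way. Choose smooth bumps $\tilde\om_i\in C_0^\infty(2U_i)$ with $\tilde\om_i\equiv 1$ on $U_i$ and with derivatives reflecting the anisotropic scaling of $U_i$, that is, $|\na\tilde\om_i|\le cr_i^{-1}$ and $|\pa_t\tilde\om_i|$ bounded by the inverse of the time-scale of $U_i$. Then set $\om_i=\tilde\om_i/\sum_{j\in\mathcal{I}_z}\tilde\om_j$, where the denominator is a finite sum by \ref{ix} and is bounded below by a positive constant on $E(\La)^c$. Properties \ref{x}, \ref{xi} and \ref{xii} follow from the quotient rule together with \ref{v}, \ref{vi} and \ref{ix}. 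The step I expect to be genuinely hard is \ref{vi}, since the interplay between the two phases and the H\"older continuity of $a$ has to be balanced exactly through the choice of $K$ in \eqref{app9}; everything else then falls into place as bookkeeping around this comparability.
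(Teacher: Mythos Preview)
Your outline matches the paper's approach closely: Vitali-type greedy selection over dyadic layers of $r_z$, comparability of $a$ and $\lambda$ driven by the constant $K$ in \eqref{app9}, and the remaining properties as bookkeeping. Two points deserve sharpening, however.

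First, the covering property \ref{i} is not a ``standard $5r$-type swelling argument'': to show $\tfrac{1}{6K}U_w\subset U_z$ for an arbitrary $w\in E(\La)^c$ and a selected $z$, you already need the $\lambda$-comparability (the time-lengths of $U_w$ and $U_z$ are measured in different intrinsic scales). In the paper this requires a four-case analysis over the phases of $U_w$ and $U_z$, and the comparability lemmas are proved \emph{before} the Vitali selection, for all $z\in E(\La)^c$, not just for selected centres. So the logical order should be: comparability first, then covering, then \ref{v}--\ref{vi}.

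Second, your mechanism for \ref{vi}---``once the $a$-values at $z_i$ and $z_j$ are comparable, the equation for $\la_z$ yields comparability of $\la_i$ and $\la_j$''---only works when at least one cylinder is $(p,q)$-intrinsic. When both $U_i$ and $U_j$ are $p$-intrinsic, $a(z_i)$ and $a(z_j)$ can be zero or incomparable, and you need a different argument: the estimate $[a]_\alpha(50Kr_z)^\alpha\la_z^q\le (K^2-1)\la_z^p$ (which you correctly identify as the source of \ref{UUi}) is what lets you absorb the H\"older error in $a$ and conclude $\la_z\le K^{2/p}\la_w$ for $w\in 50KQ_{r_z}(z)$ directly, without comparing $a(z_i)$ to $a(z_j)$. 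This same estimate is also what makes Case~1 of the covering argument go through. Once you reorganise so that the two key pointwise bounds---$a$-comparability in the $(p,q)$ case and $r_z^\alpha\la_z^q\lesssim\la_z^p$ in the $p$ case---are established up front for every $z\in E(\La)^c$, the rest of your plan is exactly the paper's.
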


\subsection{Estimates for scaling factors}
In this subsection, we provide lemmas that allow us to compare the scaling factors of neighboring cylinders. The first lemma states that coefficient function $a(\cdot)$ is comparable in the standard cylinder for $(p,q)$-intrinsic case.

\begin{lemma}\label{app:lem1}Assume $z\in E(\La)^c$ and $K^2\la_z^p<a(z)\la_z^q$. Then
	$\tfrac{a(z)}{2}\le a(w)\le 2a(z)$ for every $w\in 200 K Q_{r_z}(z)$.
 
\end{lemma}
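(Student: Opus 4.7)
The plan is to exploit the Hölder regularity of $a$ and reduce the statement to a volume bound on $r_z$. For any $w=(y,s)\in 200KQ_{r_z}(z)$ we have $|x-y|\le 200Kr_z$ and $|t-s|\le(200Kr_z)^2$, so \eqref{15} gives
\[
 |a(z)-a(w)|\le 2[a]_\alpha(200Kr_z)^\alpha.
\]
Hence it suffices to show $4[a]_\alpha(200K)^\alpha r_z^\alpha\le a(z)$, since this forces $|a(z)-a(w)|\le a(z)/2$.

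To bound $r_z$, note that $r_z<4r_z=d_z(z,E(\La))$, so $U_z^\La=G_{r_z}^{\la_z}(z)$, which is exactly the open $d_z$-ball of radius $r_z$ centered at $z$, lies in $E(\La)^c$. Chebyshev's inequality together with the definition of $E(\La)$ gives $|E(\La)^c|\le \mathcal{M}/\La$, where $\mathcal{M}=\iint_{\RR^{n+1}}(M(f^d+(af^q)^{d/p}))^{p/d}\,dz$. Since $H(z,\la_z)=\La$, we have $|U_z^\La|=2|B_1|r_z^{n+2}\la_z^2/\La$, so combining $|U_z^\La|\le|E(\La)^c|$ with this identity yields
\[
 r_z^\alpha\le\left(\frac{\mathcal{M}}{2|B_1|}\right)^{\alpha/(n+2)}\la_z^{-2\alpha/(n+2)}.
\]

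Next, the $(p,q)$-intrinsic hypothesis $K^2\la_z^p<a(z)\la_z^q$ reads $K^2<a(z)\la_z^{q-p}$, and since $\la_z\ge 1$ and $q-p\le 2\alpha/(n+2)$ this implies $\la_z^{-2\alpha/(n+2)}<a(z)/K^2$. Substituting produces
\[
 r_z^\alpha<\left(\frac{\mathcal{M}}{2|B_1|}\right)^{\alpha/(n+2)}\frac{a(z)}{K^2},
\]
and the required inequality reduces to $K^{2-\alpha}\ge 4\cdot 200^\alpha[a]_\alpha(\mathcal{M}/(2|B_1|))^{\alpha/(n+2)}$. Since $K\ge 1$ and $2-\alpha\ge 1$ we have $K^{2-\alpha}\ge K$, while \eqref{app9} provides $K\ge 800[a]_\alpha(\mathcal{M}/|B_1|)^{\alpha/(n+2)}$, so everything comes down to the numerical fact $800\ge 4\cdot 200^\alpha\cdot 2^{-\alpha/(n+2)}$, which holds for every $\alpha\in(0,1]$. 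The main technical point is precisely this matching of constants: the factor $800$ in the definition of $K$ is calibrated so that the $200K$-enlargement in the conclusion fits within the Hölder continuity budget of $a(z)$, and this matching is essentially tight.
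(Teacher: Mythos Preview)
Your proof is correct and follows essentially the same route as the paper's: both arguments reduce the claim to a bound of the form $r_z^\alpha\la_z^{2\alpha/(n+2)}\le c(\mathcal{M}/|B_1|)^{\alpha/(n+2)}$, obtained from $G_{r_z}^{\la_z}(z)\subset E(\La)^c$ and the integrability of the maximal function, and then combine this with the $(p,q)$-intrinsic hypothesis and the gap condition $q-p\le 2\alpha/(n+2)$. The only cosmetic difference is that you argue directly via Chebyshev and verify the constants explicitly, whereas the paper runs the same chain of inequalities as a proof by contradiction; your closing remark that the constant $800$ is ``essentially tight'' is somewhat overstated, since several steps carry slack.
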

\begin{proof}
	We claim that
	\begin{align}\label{app12}
		 2[a]_{\alpha}\left(200 K r_z\right)^\alpha<a(z).
	\end{align}
 Assume for contradiction that $2[a]_{\alpha}\left(200 K r_z\right)^\alpha\ge a(z)$. Since $G^{\la_z}_z\subset E(\La)^c$ and $a(z)\la_z^q<\La$, we have
    \[
    	a(z)\la_z^q<\fiint_{G_{z}^{\la_z}}\left(M(f^d+(af^q)^\frac{d}{p})(w)\right)^\frac{p}{d}\,dw.
    \]
    Using the facts that $\La=\la_z^p+a(z)\la_z^q$ and $\la_z^p<a(z)\la_z^q$, we obtain
    \begin{align*}
        \begin{split}
            a(z)\la_z^q&<\fiint_{G_{z}^{\la_z}}\left(M(f^d+(af^q)^\frac{d}{p})(w)\right)^\frac{p}{d}\,dw\\
            &=\frac{\La\la_z^{-2}}{2|B_1|r_z^{n+2}}\iint_{G_{z}^{\la_z}}\left(M(f^d+(af^q)^\frac{d}{p})(w)\right)^\frac{p}{d}\,dw\\
            &<\frac{a(z)\la_z^{q-2}}{|B_1|r_z^{n+2}}\iint_{G_{z}^{\la_z}}\left(M(f^d+(af^q)^\frac{d}{p})(w)\right)^\frac{p}{d}\,dw.
        \end{split}
    \end{align*}
    Dividing both sides by $a(z)\la_z^{q-2}r_z^{-n-2}$ and raising to power $\tfrac{\al}{n+2}$, it follows from \eqref{app9} that
    \begin{align}\label{app:16}
        r_z^\alpha\la_z^\frac{2\alpha}{n+2}\le \left(\frac{1}{|B_1|}\iint_{\RR^{n+1}}\left(M(f^d+(af^q)^\frac{d}{p})(w)\right)^\frac{p}{d}\,dw\right)^\frac{\alpha}{n+2}\le \frac{1}{800[a]_{\alpha}}K.
    \end{align}
    Since $K^2\la_z^p<a(z)\la_z^q$ and $q\le p+\tfrac{2\alpha}{n+2}$, the counter assumption and \eqref{app:16} imply
    \[
        K^2\la_z^p<a(z)\la_z^q\le 2[a]_\alpha(200 Kr_z)^\alpha\la_z^p\la_z^\frac{2\alpha}{n+2}
         < 400[a]_\alpha K\la_z^p r_z^\alpha\la_z^\frac{2\alpha}{n+2}\le \tfrac{1}{2}K^2\la_z^p.
    \]
       This is a contradiction and thus \eqref{app12} holds true.

       By \eqref{app12} we have
\[
	2[a]_{\alpha}\left(200 K r_z\right)^\alpha<a(z)\le\inf_{w\in 200K Q_{r_z}(z)}a(w) +[a]_{\alpha}(200 Kr_z)^\alpha
\]
and thus
\[
	[a]_{\alpha}(200 Kr_z)^\alpha<\inf_{w\in200 KQ_{r_z}(z)}a(w).
\]
It follows that
\[
	\sup_{w\in 200 K Q_{r_z}(z)}a(w)\le \inf_{w\in 200 K Q_{r_z}(z)}a(w)+[a]_{\alpha}(200 K r_z)^\alpha<2\inf_{w\in 200 K Q_{r_z}(z)}a(w).
\]
This completes the proof.
\end{proof}

The second lemma states that if $a(\cdot)$ is comparable at two points, then $\la_{(\cdot)}$ is comparable at these points.
\begin{lemma}\label{app:lem2}
    Assume $z,w\in E(\La)^c$ and $\tfrac{a(z)}{2}\le a(w)\le 2a(z)$. Then $2^{-\frac{1}{p}}\la_w\le\la_z\le 2^\frac{1}{p}\la_w.$
Moreover, the above inequality holds provided $K^2\la_z^p<a(z)\la_z^q$ and $w\in 200 K Q_{r_z}(z)$.
\end{lemma}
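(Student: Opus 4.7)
The plan is to use only monotonicity of the map $s\mapsto s^p+a(z)s^q$ together with $q\ge p$. Recall that $\la_z$ and $\la_w$ are the unique positive numbers with
\[
\la_z^p+a(z)\la_z^q=\La=\la_w^p+a(w)\la_w^q.
\]
Set $f_z(s)=s^p+a(z)s^q$, which is strictly increasing in $s$. Then $\la_z\le 2^{1/p}\la_w$ is equivalent to $f_z(2^{1/p}\la_w)\ge f_z(\la_z)=\La$, and similarly $\la_z\ge 2^{-1/p}\la_w$ is equivalent to $f_z(2^{-1/p}\la_w)\le \La$.

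First I would establish the upper bound. Using $2^{q/p}\ge 2$ (because $q\ge p$) and then $a(z)\ge a(w)/2$,
\[
f_z(2^{1/p}\la_w)=2\la_w^p+2^{q/p}a(z)\la_w^q\ge 2\la_w^p+2a(z)\la_w^q\ge 2\la_w^p+a(w)\la_w^q\ge \la_w^p+a(w)\la_w^q=\La,
\]
so $\la_z\le 2^{1/p}\la_w$. For the lower bound, using $2^{-q/p}\le 1/2$ and $a(z)\le 2a(w)$,
\[
f_z(2^{-1/p}\la_w)=\tfrac12\la_w^p+2^{-q/p}a(z)\la_w^q\le \tfrac12\la_w^p+\tfrac12 a(z)\la_w^q\le \tfrac12\la_w^p+a(w)\la_w^q\le \La,
\]
which gives $\la_z\ge 2^{-1/p}\la_w$. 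This proves the first claim.

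For the \emph{moreover} part I would simply combine the first part with Lemma~\ref{app:lem1}. Assuming $K^2\la_z^p<a(z)\la_z^q$ and $w\in 200KQ_{r_z}(z)$, Lemma~\ref{app:lem1} applied at $z$ yields $\tfrac{a(z)}{2}\le a(w)\le 2a(z)$, so the hypothesis of the first part is satisfied and the conclusion $2^{-1/p}\la_w\le\la_z\le 2^{1/p}\la_w$ follows.

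There is no real obstacle here; the only subtle point is keeping track of which side of the inequality the hypothesis $q\ge p$ is used on, so that the estimates are applied with $f_z$ (not $f_w$), whose value at $\la_z$ is explicitly $\La$.
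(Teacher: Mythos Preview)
Your proof is correct and is essentially the same argument as the paper's: the paper proves $\la_z\le 2^{1/p}\la_w$ by contradiction (assuming $\la_z>2^{1/p}\la_w$ and deriving $\La>\La$ via the same chain $2\la_w^p+2^{q/p}a(z)\la_w^q\ge \la_w^p+a(w)\la_w^q$), then obtains the other inequality by symmetry, whereas you phrase both directions directly via monotonicity of $f_z$. The ``moreover'' part is handled identically in both, by invoking Lemma~\ref{app:lem1}.
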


\begin{proof}
	The second statement of the lemma follows from Lemma~\ref{app:lem1} and the first statement of this lemma. To prove the first statement, it suffices to show $\la_z\le 2^\frac{1}{p}\la_w$. Assume for contradiction that $\la_z>2^\frac{1}{p}\la_w$. Then
	\[
			\La=\la_z^p+a(z)\la_z^q>2\la_w^p+2^\frac{q}{p}a(z)\la_w^q,
	\]
    and using the assumption on $a(\cdot)$, we get
    \[
    	\La>2\la_w^p+2^{\frac{q}{p}-1}a(w)\la_w^q> \la_w^p+a(w)\la_w^q=\La.
    \]
    This is a contradiction and the proof is completed.
\end{proof}

We need the estimate between $\la_{z}$ and $\la_{w}$ also in the $p$-intrinsic case. For this, we will use the continuity of $a(\cdot)$ and the following lemma.
\begin{lemma}\label{app:lem3}
	Assume $z\in E(\La)^c$ and $K^2\la_z^p\ge a(z)\la_z^q$. Then $[a]_{\alpha}(50 K r_z)^{\alpha}\la_z^q\le (K^2-1)\la_z^p.$
\end{lemma}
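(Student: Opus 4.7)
The plan is to mimic the argument of Lemma~\ref{app:lem1}, except that now we work with the $p$-intrinsic cylinder $Q_z^{\la_z}$ in place of the $(p,q)$-intrinsic $G_z^{\la_z}$. Since $d_z(z,E(\La))=4r_z$, we have $Q_z^{\la_z}\subset E(\La)^c$, and hence $(M(f^d+(af^q)^{d/p}))^{p/d}>\La$ holds pointwise on $Q_z^{\la_z}$. Averaging this inequality over $Q_z^{\la_z}$, whose measure equals $2|B_1|r_z^{n+2}\la_z^{2-p}$, and invoking the trivial bound $\la_z^p\le\La$, I would obtain
\[
    \la_z^2 r_z^{n+2} \le \frac{1}{2|B_1|}\iint_{\RR^{n+1}}\left(M(f^d+(af^q)^{d/p})(w)\right)^{p/d}\,dw.
\]
Raising both sides to the power $\alpha/(n+2)$ and applying the definition of $K$ in \eqref{app9} then produces the analogue of \eqref{app:16}, namely $r_z^\alpha\la_z^{2\alpha/(n+2)}\le\tfrac{K}{800[a]_\alpha}$.

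From this, the structural condition $q\le p+\tfrac{2\alpha}{n+2}$ combined with $\la_z\ge 1$ gives $\la_z^{q-p}\le\la_z^{2\alpha/(n+2)}$, and multiplying through by $\la_z^p$ yields
\[
    [a]_\alpha r_z^\alpha\la_z^q \le \tfrac{K}{800}\la_z^p.
\]
The proof is then completed by multiplying by $(50K)^\alpha$: since $\alpha\in(0,1]$ and $50K\ge 1$, one has $(50K)^\alpha\le 50K$, so
\[
    [a]_\alpha(50Kr_z)^\alpha\la_z^q \le \tfrac{K^2}{16}\la_z^p,
\]
and the desired bound follows from the elementary inequality $\tfrac{K^2}{16}\le K^2-1$, which is valid because $K\ge 2^{1/2}$ forces $K^2\ge 2>\tfrac{16}{15}$.

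I do not foresee any serious obstacle; the argument is essentially a direct translation of the proof of Lemma~\ref{app:lem1}, and the numerical constants in \eqref{app9} appear to have been tuned precisely so that this bookkeeping succeeds. The one point to keep in mind is that in the $p$-intrinsic case the strengthened estimate $\La<2a(z)\la_z^q$ used in Lemma~\ref{app:lem1} is not available; fortunately, the much weaker comparison $\La\ge\la_z^p$ suffices, at the cost of a harmless multiplicative factor of $2^{-\alpha/(n+2)}$ along the way.
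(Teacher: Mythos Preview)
Your argument is correct and follows essentially the same route as the paper: both use $Q_z^{\la_z}\subset E(\La)^c$ to bound $r_z^\alpha\la_z^{2\alpha/(n+2)}$ via the definition of $K$, then invoke $q\le p+\tfrac{2\alpha}{n+2}$ and $(50K)^\alpha\le 50K$. The only difference is cosmetic bookkeeping---the paper estimates the intermediate quantity by $\tfrac{K-1}{50[a]_\alpha}$ to land on $K(K-1)\le K^2-1$, whereas you use $\tfrac{K}{800[a]_\alpha}$ to land on $\tfrac{K^2}{16}\le K^2-1$.
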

\begin{proof}
	We argue as in the proof of Lemma~\ref{app:lem1}. Observe that $Q_z^{\la_z}\subset E(\La)^c$ and therefore
	\begin{align*}
        \la_z^p
        &<\fiint_{Q_{z}^{\la_z}}\left(M(f^d+(af^q)^\frac{d}{p})(w)\right)^\frac{p}{d}\,dw\\
        &=\frac{\la_z^{p-2}}{2|B_1|r_z^{n+2}}\iint_{Q_{z}^{\la_z}}\left(M(f^d+(af^q)^\frac{d}{p})(w)\right)^\frac{p}{d}\,dw.
	\end{align*}
After dividing both sides and raising to power $\tfrac{\al}{n+2}$, it follows from \eqref{app9} that
 \[
     r_z^{\alpha}\la_z^\frac{2\alpha}{n+2}
     <\left(\frac{1}{|B_1|}\iint_{\RR^{n+1}}\left(M(f^d+(af^q)^\frac{d}{p})(w)\right)^\frac{p}{d}\,dw\right)^\frac{\alpha}{n+2}\le \frac{1}{50[a]_\alpha}(K-1).
 \]
Using this and the fact that $q\le p+\tfrac{2\alpha}{n+2}$ we obtain
\begin{align*}
\begin{split}
    [a]_\alpha (50 K r_z)^\alpha \la_z^q
    &\le 50 [a]_\alpha \left(\frac{1}{|B_1|}\iint_{\RR^{n+1}}\left(M(f^d+(af^q)^\frac{d}{p})(w)\right)^\frac{p}{d}\,dw\right)^\frac{\alpha}{n+2} K\la_z^p\\
    &\le (K-1)K\la_z^p.
\end{split}
\end{align*}
Since $(K-1)K\le K^2-1$, the proof is completed.
\end{proof}

Finally, we state the comparability of $\la_{(\cdot)}$ in the $p$-intrinsic case.
\begin{lemma}\label{app:lem4}
	Assume $z\in E(\La)^c$ and $K^2\la_z^p\ge a(z)\la_z^q$. If $w\in 50 KQ_{r_z}(z)$, then 
	$\la_z\le K^\frac{2}{p}\la_w$.
\end{lemma}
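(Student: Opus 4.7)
The plan is to argue by contradiction. Suppose $\la_w < K^{-2/p}\la_z$, so that $\la_w^p < K^{-2}\la_z^p$ and $\la_w^q < K^{-2q/p}\la_z^q$. The identity $\la_z^p + a(z)\la_z^q = \la_w^p + a(w)\la_w^q = \La$ then gives on one side
\[
    a(w)\la_w^q \,=\, \La - \la_w^p \,>\, (1-K^{-2})\la_z^p + a(z)\la_z^q,
\]
while on the other side, the Hölder bound $a(w) \le a(z) + 2[a]_{\alpha}(50Kr_z)^\alpha$ (which follows from $w \in 50KQ_{r_z}(z)$ and the Hölder continuity of the extended coefficient) combined with $\la_w^q < K^{-2q/p}\la_z^q$ gives
\[
    a(w)\la_w^q \,<\, K^{-2q/p}\bigl(a(z) + 2[a]_{\alpha}(50Kr_z)^\alpha\bigr)\la_z^q.
\]
Subtracting these and dropping the nonnegative term $a(z)\la_z^q(1-K^{-2q/p})$ yields the key inequality
\[
    (1-K^{-2})\la_z^p \,<\, 2K^{-2q/p}[a]_{\alpha}(50Kr_z)^\alpha\la_z^q.
\]

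The main obstacle is closing this into a contradiction. The bound from Lemma~\ref{app:lem3} as stated, $[a]_\alpha(50Kr_z)^\alpha\la_z^q \le (K^2-1)\la_z^p$, only reduces the above to $K^{2(q-p)/p} < 2$, which is consistent with the standing hypothesis $q-p \le 2\alpha/(n+2)$ when $K$ is close to $\sqrt{2}$. The remedy is to invoke the sharper form of the Hölder bound that is implicit in the proof of Lemma~\ref{app:lem3}: by the definition \eqref{app9} of $K$ we have the identity $\bigl(\tfrac{1}{|B_1|}\iint M(f^d + (af^q)^{d/p})^{p/d}\,dz\bigr)^{\alpha/(n+2)} = (K-\sqrt{2})/(800[a]_\alpha)$, and running the estimation in Lemma~\ref{app:lem3} with this identity (rather than its loosening to $(K-1)/(50[a]_\alpha)$), together with $q-p \le 2\alpha/(n+2)$, $\la_z \ge 1$, and $(50K)^\alpha \le 50K$, produces the sharper estimate $[a]_\alpha(50Kr_z)^\alpha\la_z^q \le K(K-\sqrt{2})/16\cdot \la_z^p$.

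Inserting this sharper estimate into the key inequality and multiplying through by $K^2/8$ reduces the entire contradiction to the scalar inequality $8(K^2-1) < K^{3-2q/p}(K-\sqrt{2})$. Since $q \ge p$ and $K > 1$ imply $K^{3-2q/p} \le K$, it suffices to verify $8(K^2-1) \ge K(K-\sqrt{2})$, equivalently $7K^2 + \sqrt{2}K \ge 8$. This quadratic is increasing on $K \ge 0$ and evaluates to $16$ at $K = \sqrt{2}$, so it holds for every $K \ge \sqrt{2}$. This is the required contradiction, and the conclusion $\la_z \le K^{2/p}\la_w$ follows.
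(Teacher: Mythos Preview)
Your proof is correct, but it takes a detour that the paper avoids. The paper's argument also assumes $\la_w < K^{-2/p}\la_z$ for a contradiction, but instead of isolating $a(w)\la_w^q$ and then discarding the term $(1-K^{-2q/p})a(z)\la_z^q$, it keeps $\La = \la_w^p + a(w)\la_w^q$ intact: applying $a(w)\le a(z)+[a]_\alpha(50Kr_z)^\alpha$ and then the cruder bound $\la_w^q \le K^{-2}\la_z^q$ (valid since $q\ge p$) on every term gives
\[
    \La \;<\; K^{-2}\bigl(\la_z^p+a(z)\la_z^q\bigr) + K^{-2}[a]_\alpha(50Kr_z)^\alpha\la_z^q
    \;=\; K^{-2}\La + K^{-2}[a]_\alpha(50Kr_z)^\alpha\la_z^q,
\]
and Lemma~\ref{app:lem3} as stated bounds the last term by $(1-K^{-2})\la_z^p\le(1-K^{-2})\La$, yielding $\La<\La$ directly. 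The term you drop is exactly the piece needed to reassemble $\La$ on the left, and this lost cancellation is what forces you to mine the proof of Lemma~\ref{app:lem3} for the sharper constant $K(K-\sqrt{2})/16$. Your extra factor of $2$ in the H\"older oscillation, which the paper tacitly omits, is a legitimate but separate bookkeeping point.
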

\begin{proof}
	   Assume for contradiction that $\la_w<K^{-\frac{2}{p}}\la_z$. By the H\"older regularity in \eqref{15} we have
	\[
			\La=\la_w^p+a(w)\la_w^q
			\le \la_w^p+a(z)\la_w^q+[a]_{\alpha}(50 Kr_z)^{\alpha}\la_w^q.
	\]
 Therefore the counter assumption and Lemma~\ref{app:lem3} imply that
    \begin{align*}
    \begin{split}
        \La&\le \frac{1}{K^2}(\la_z^p+a(z)\la_z^q)+\frac{[a]_{\alpha}(50 Kr_z)^\alpha}{K^2}\la_z^q\\
        &\le\frac{1}{K^2}(\la_z^p+a(z)\la_z^q)+\frac{K^2-1}{K^2}\la_z^p< \la_z^p+a(z)\la_z^q=\La.
    \end{split}
    \end{align*}
    This is a contradiction and the proof is completed.
\end{proof}

\subsection{Vitali-type covering argument}
We select a countable family of intrinsic cylinders from $\{U_z^{\La}\}_{z\in E(\La)^c}$ such that they cover $E(\La)^c$ and $\tfrac{1}{6K}$-times the chosen cylinders are pairwise disjoint. We start by claiming that $r_z$, $z\in E(\La)^c$, are uniformly bounded.
Since $|E(\La)^c|<\infty$ and $\La>1+\|a\|_{L^\infty(\mathbb{R}^{n+1})}$, there exist $\la=\la(\La,\lVert a\rVert_{L^\infty(\RR^{n+1})})>1$ and $R=R(\La,\lVert a\rVert_{L^\infty(\RR^{n+1})})>0$ such that 
\begin{align}\label{app37}
	\la^p+\lVert a\rVert_{L^\infty(\RR^{n+1})}\la^q = \La
\end{align}
and $\left|B_R\times \left(-\la^2\La^{-1}R^2,\la^{2}\La^{-1}R^2\right)\right|=|E(\La)^c|$. It is easy to see that $\la\le\la_z\le \La$ for every $z\in E(\La)^c$.
Therefore, if there exists $z\in E(\La)^c$ such that $r_z>R$, then
\[
   |E(\La)^c|= \left|B_R\times \left(-\la^2\La^{-1}R^2,\la^{2}\La^{-1}R^2\right)\right|< |U_z^{\La}|\le |E(\La)^c|.
\]
This is a contradiction and we conclude that $r_z\le R$ for every $z\in E(\La)^c$.

Let $\mathcal{F}=\left\{\tfrac{1}{6K}U_z^\La\right\}_{z\in E(\La)^c}$ and 
\[
	\mathcal{F}_j=\left\{\tfrac{1}{6K}U^{\La}_z\in \mathcal{F}: \frac{R}{2^j} < r_z\le \frac{R}{2^{j-1}}\right\},
 \quad j\in\mathbb{N}.
\]
Since $r_z$, $z\in E(\La)^c$, are uniformly bounded, we have $\mathcal{F}=\cup_{j\in \mathbb{N}}\mathcal{F}_j$. 
We obtain a disjoint subcollection $\mathcal{G}_j\subset \mathcal{F}_j$ as follows. 
Let $\mathcal{G}_1$ be a maximal disjoint collection of cylinders in $\mathcal{F}_1$. 
We proceed recursively.
If $\mathcal{G}_1,\mathcal{G}_2,...,\mathcal{G}_{k-1}$ have been selected, let $\mathcal{G}_{k}$ be any maximal disjoint collection of 
\[
	\biggl\{\tfrac{1}{6K}U_z^{\La}\in \mathcal{F}_k: \tfrac{1}{6K}U_z^{\La}\cap \tfrac{1}{6K}U^{\La}_{w}=\emptyset\ \text{for every}\ \tfrac{1}{6K}U_{w}^\La\in \bigcup_{j=1}^{k-1} \mathcal{G}_j  \biggr\}.
\]
Note that for $\la$ defined in \eqref{app37} and $z\in \mathcal{G}_j$, we have
\[
	\tfrac{1}{2^j6K}\left(B_{R}(x)\times \left(t-\la^2\La^{-1}R^2,t+\la^2\La^{-1}R^2\right)\right)\subset \tfrac{1}{6K}U_z^\La\subset E(\La)^c.
\]
Since $\mathcal{G}_j$ is a pairwise disjoint collection and $|E(\La)|^c<\infty$, this implies
that the cardinality of $\mathcal{G}_j$ is finite. Let $\mathcal{G}=\cup_{j\in\mathbb{N}}\mathcal{G}_j$.
By the construction, $\mathcal{G}$ is a countable subfamily of pairwise disjoint cylinders. 

In the rest of this subsection we prove that the cylinders in $\mathcal{G}$ cover $E(\La)^c$. 
Let $\tfrac{1}{6K}U_{w}^\La\in \mathcal{F}$. There exists $i\in\mathbb{N}$ such that $\tfrac{1}{6K}U_{w}^\La\in \mathcal{F}_i$ and by the maximal disjointness of $\mathcal{G}_j$, there exists a cylinder $\tfrac{1}{6K}U_{z}^\La\in \cup_{j=1}^{i}\mathcal{G}_j$ such that
\begin{align}\label{app46}
	\tfrac{1}{6K}U_{w}\cap \tfrac{1}{6K}U_{z}\ne\emptyset.
\end{align}
Note that since $\tfrac{1}{6K}U_{w}\in  \mathcal{F}_i$ and $\tfrac{1}{6K}U_{z}\in \mathcal{F}_j$ for some $j\in\{1,\dots,i\}$, we have
\begin{align}\label{app47}
	 r_{w}\le2r_{z}.
\end{align}

We claim that $6K$ is the constant in the Vitali-type covering, that is,
\begin{align}\label{app48}
	\tfrac{1}{6K}U_{w}\subset U_{z}.
\end{align}
Let $z=(x,t)$ and $w=(y,s)$. For the inclusion in the spatial direction, observe that
\eqref{app46} implies
\[
	\tfrac{1}{6K}B_{r_w}(y)\cap \tfrac{1}{6K}B_{r_z}\ne\emptyset.
\]
From \eqref{app47} and the proof of the standard Vitali covering lemma, we obtain
\[
	\tfrac{1}{6K}B_{r_w}(y)\subset \tfrac{5}{6K}B_{r_z}(x)\subset B_{r_z}(x).
\]
On the other hand, it follows from \eqref{app46} that $Q_{r_w}(w)\cap  Q_{r_z}(z)\ne \emptyset$.
Again, it easily follows from the standard Vitali covering argument with \eqref{app47} that $Q_{r_w}(w)\subset 5Q_{r_z}(z)$. In particular, we have
\begin{align}\label{app51}
	w\in 5Q_{r_z}(z)\subset 50K Q_{r_z}(z).
\end{align}

It remains to prove the inclusion in \eqref{app48} in the time direction. We divide the proof into four cases.

\textit{Case 1: $U_{w}^{\La}=Q^{\la_w}_{w}$ and $U_{z}^{\La}=Q^{\la_z}_{z}$.} 
By \eqref{app47}, we get
\begin{align*}
	|\tau-t|&\le |\tau-s|+|s-t|
	\le\left|\tfrac{1}{6K}I_{r_w}^{\la_w}(s)\right|+\tfrac{1}{2}\left|\tfrac{1}{6K}I_{r_z}^{\la_z}(t)\right| \\
  &=2\la_w^{2-p}\left(\frac{r_w}{6K}\right)^2+\la_z^{2-p}\left(\frac{r_z}{6K}\right)^2\\
  &\le (8\la_w^{2-p}+\la_z^{2-p})\left(\frac{r_z}{6K}\right)^2
\end{align*}
for every $\tau\in \tfrac{1}{6K}I_{r_w}^{\la_w}(s)$.
Lemma~\ref{app:lem4} and \eqref{app51} imply $\la_z\le K^\frac{2}{p}\la_w$ and thus we have
\begin{align*}
	|\tau-t|&\le (8K^\frac{2(p-2)}{p}+1)\la_z^{2-p}\left(\frac{r_z}{6K}\right)^2\\
 &\le (9K^2)\la_z^{2-p}\left(\frac{r_z}{6K}\right)^2\le \la_z^{2-p}r_z^2=|I_{r_z}^{\la_z}(t)|,
\end{align*}
where we also applied the fact that $\tfrac{p-2}{p}\le 1$. Therefore, the conclusion in \eqref{app48} holds in this case.

\textit{Case 2: $U^{\La}_{w}=G^{\la_w}_{w}$ and $U^{\La}_{z}=Q^{\la_z}_{z}$.} In this case we have $G^{\la_w}_{w}\subset Q^{\la_w}_{w}$. The conclusion follows from the argument in the previous case.

\textit{Case 3: $U^{\La}_{w}=Q^{\la_w}_{w}$ and $U^{\La}_{z}=G^{\la_z}_{z}$.} 
By \eqref{app47} we get
\begin{align*}
		|\tau-t|&\le |\tau-s|+|s-t|
		\le\left|\tfrac{1}{6K}I_{r_w}^{\la_w}(s)\right|+\tfrac{1}{2}\left|\tfrac{1}{6K}J^{\la_z}_{r_z}(t)\right| \\
  &=2\la_w^{2-p}\left(\frac{r_w}{6K}\right)^2+\la_z^{2}\La^{-1}\left(\frac{r_z}{6K}\right)^2\\
  &\le \left(8\la_w^{2-p}+\la_z^2\La^{-1}\right)\left(\frac{r_z}{6K}\right)^2
\end{align*}
for every $\tau\in \tfrac{1}{6K}I_{r_w}^{\la_w}(s)$.
In this case we have $a(w)\la_w^q\le K^2\la_w^p$, and thus we obtain
\[
		\la_w^{2-p}
		=2\frac{\la_w^2}{\la_w^p+\la_w^p}\le 2K^2\frac{\la_w^2}{\la_w^p+a(w)\la_w^q}=2K^2\la_w^2\La^{-1}.
\]
Lemma~\ref{app:lem2} and \eqref{app51} imply that
\[
	2K^2\la_w^2\La^{-1}\le 2K^2(2^\frac{1}{p}\la_z)^2\La^{-1}\le 4K^2\la_z^2\La^{-1}.
\]
It follows that
\[
	|\tau-t|\le (32K^2+1)\la_z^2\La^{-1}\left(\frac{r_z}{6K}\right)^2\le 33K^2\la_z^2\La^{-1}\left(\frac{r_z}{6K}\right)^2\le |J_{r_z}^{\la_z}(t)|,
\]
and thus \eqref{app48} holds in this case.

\textit{Case 4: $U^{\La}_{w}=G^{\la_w}_{w}$ and $U^{\La}_{z}=G^{\la_z}_{z}$.}  
By \eqref{app47} we get
\begin{align*}
		|\tau-t|&\le |\tau-s|+|s-t|
		\le\left|\tfrac{1}{6K}J_{r_w}^{\la_w}(s)\right|+\tfrac{1}{2}\left|\tfrac{1}{6K}J^{\la_z}_{r_z}(t)\right| \\
 &=2\la_w^{2}\La^{-1}\left(\frac{r_w}{6K}\right)^2+\la_z^{2}\La^{-1}\left(\frac{r_z}{6K}\right)^2\\
 &\le \left(8\la_w^2\La^{-1}+\la_z^2\La^{-1}\right)\left(\frac{r_z}{6K}\right)^2
\end{align*}
for every $\tau\in \tfrac{1}{6K}I_{r_w}^{\la_w}(s)$.
Applying \eqref{app51} and Lemma~\ref{app:lem2} gives
\[
    |\tau-t|\le 17\la_z^2\La^{-1}\left(\frac{r_z}{6K}\right)^2\le |J_{r_z}^{\la_z}(t)|.
\]
This completes the proof of \eqref{app48} in the final case. Thus we have found a countable covering family $\{U_i\}_{i\in\mathbb{N}}$ of intrinsic cylinders defined as in \eqref{def_Ui_37} and with $\tfrac{1}{6K}U_i$ being pairwise disjoint.

\subsection{Properties of the covering}  
In this subsection we show that the Whitney decomposition $\{U_i\}_{i\in\mathbb{N}}$ defined in the previous subsection satisfies the remaining properties  in Proposition~\ref{prop_whitney} 
\begin{lemma}\label{app:lem5}
We have $3r_i\le d_i(U_i,E(\La))\le 4r_i$ for every $i\in\mathbb{N}$.
\end{lemma}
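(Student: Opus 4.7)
The proof is essentially a direct consequence of the definition of $r_i$ via \eqref{app6} together with the fact that $U_i$ is, by construction, a ball of radius $r_i$ in the metric $d_i$.

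\textbf{Upper bound.} Since $z_i \in U_i$, taking the infimum over $U_i$ in the definition of $d_i(U_i, E(\Lambda))$ is bounded above by the infimum at the single point $z_i$. Thus
\[
    d_i(U_i, E(\Lambda)) = \inf_{z \in U_i, \, w \in E(\Lambda)} d_i(z,w) \le \inf_{w \in E(\Lambda)} d_i(z_i, w) = d_i(z_i, E(\Lambda)) = 4r_i,
\]
using \eqref{app6} for the last equality.

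\textbf{Lower bound.} The key observation is that for any $z \in U_i$, we have $d_i(z_i, z) \le r_i$. Indeed, if $U_i = Q_i = B_{r_i}(x_i) \times I_{r_i}^{\la_i}(t_i)$, then for $z=(x,t) \in U_i$ we have $|x-x_i| < r_i$ and $|t-t_i| < \la_i^{2-p} r_i^2$, so $\sqrt{\la_i^{p-2}|t-t_i|} < r_i$, giving $d_i(z_i,z) < r_i$ by the $p$-intrinsic definition of $d_i$. In the $(p,q)$-intrinsic case $U_i = G_i$, the time length of $J_{r_i}^{\la_i}(t_i)$ is $\la_i^2 H(z_i,\la_i)^{-1} r_i^2 = \la_i^2 \Lambda^{-1} r_i^2$ since $H(z_i,\la_i) = \Lambda$, and the same computation with the $(p,q)$-intrinsic metric yields $d_i(z_i,z) < r_i$. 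By the triangle inequality, for any $z \in U_i$ and $w \in E(\Lambda)$,
\[
    d_i(z,w) \ge d_i(z_i, w) - d_i(z_i, z) \ge 4r_i - r_i = 3r_i,
\]
where we used $d_i(z_i, w) \ge d_i(z_i, E(\Lambda)) = 4r_i$. Taking the infimum over $z \in U_i$ and $w \in E(\Lambda)$ gives $d_i(U_i, E(\Lambda)) \ge 3r_i$.

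There is no real obstacle here; the lemma is essentially a bookkeeping statement that verifies the radius $r_i$ defined by \eqref{app6} makes $U_i$ a natural Whitney-type cylinder in the metric $d_i$. The only minor care needed is in unifying the two cases ($p$-intrinsic and $(p,q)$-intrinsic), which is handled uniformly once one observes that in both cases the time half-length of $U_i$ matches the time scaling in $d_i$ so that $U_i$ equals $\{z : d_i(z_i, z) < r_i\}$.
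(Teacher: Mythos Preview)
Your proof is correct and follows essentially the same argument as the paper: both use the triangle inequality together with $d_i(z_i,E(\La))=4r_i$ and the fact that $d_i(z_i,z)\le r_i$ for $z\in U_i$. You simply spell out in a bit more detail why $U_i$ is the $d_i$-ball of radius $r_i$ about $z_i$, which the paper treats as implicit.
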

\begin{proof}
	From the choice of $r_i$ in \eqref{app6}, we have
	\[
		d_{i}(U_i,E(\La))\le d_{i}(z_i,E(\La))=4r_i.
	\]
    Therefore the second inequality holds. To show the first inequality, note that for any $z\in U_i$, triangle inequality gives
    \[
    	d_{i}(z,E(\La))\ge d_i(z_i,E(\La))-d_{i}(z,z_i)\ge 4r_i-r_i=3r_i.
    \]
     Taking infimum over $z\in U_i$, we get $d_i(U_i,E(\La))\ge 3r_i$.
    This completes the proof.
\end{proof}

Next we will discuss properties \ref{v}-\ref{vii} in Proposition~\ref{prop_whitney}. First we show that $r_{j}$, $\la_{j}$ and $d_j(\cdot,\cdot)$ are comparable in $j\in\mathcal{I}_i$, defined as in \eqref{def_I}, by using the lemmas in Section~\ref{subsec_app_1}.
\begin{lemma}\label{app:lem6}
	We have $(12K)^{-1}r_j\le r_i\le 12K r_j$ for every $i\in \NN$ and $j\in \mathcal{I}_i$.
	Moreover, if $U_i=G_i$, then $r_j\le 12 r_i$.
\end{lemma}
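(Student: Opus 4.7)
The plan is to argue by contradiction, using the distance bound $d_i(U_i, E(\La)) \ge 3 r_i$ from Lemma~\ref{app:lem5} together with the $\la$-comparability provided by Lemmas~\ref{app:lem1}, \ref{app:lem2}, and \ref{app:lem4}. By the symmetry $2U_i \cap 2U_j = 2U_j \cap 2U_i$, it suffices to prove $r_i \le 12K r_j$; the bound $r_j \le 12K r_i$ then follows by exchanging the roles of $i$ and $j$.

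Suppose $r_i > 12K r_j$ for contradiction, and pick $w \in 2U_i \cap 2U_j$. Writing $\alpha_k = \la_k^{2-p}$ if $U_k = Q_k$ and $\alpha_k = \la_k^2\La^{-1}$ if $U_k = G_k$, the inclusions $d_i(z_i, w) \le 2r_i$ and $d_j(z_j, w) \le 2r_j$ together with $\alpha_i,\alpha_j \le 1$ give
\[
    |x_i - x_j| \le 2r_i + 2r_j
    \quad\text{and}\quad
    |t_i - t_j| \le 4\alpha_i r_i^2 + 4\alpha_j r_j^2 \le 8 r_i^2,
\]
placing $z_j$ inside both $50K Q_{r_i}(z_i)$ and $200K Q_{r_i}(z_i)$. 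Depending on whether $U_i = Q_i$ or $U_i = G_i$, Lemma~\ref{app:lem4} or Lemmas~\ref{app:lem1}--\ref{app:lem2} then deliver a $K$-dependent two-sided comparability between $\la_i$ and $\la_j$. I next pick $w' \in E(\La)$ with $d_j(z_j, w') < 5 r_j$, which exists since $d_j(z_j, E(\La)) = 4 r_j$, and estimate
\[
    |x_i - x_{w'}| \le 2r_i + 7r_j < 3 r_i
    \quad\text{and}\quad
    \frac{|t_i - t_{w'}|}{\alpha_i} \le 4 r_i^2 + 29\, \frac{\alpha_j}{\alpha_i}\, r_j^2,
\]
the spatial bound using $r_j < r_i/(12K) < r_i/7$. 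A short case analysis on $(U_i, U_j)$ yields $\alpha_j/\alpha_i \le 4K^2$ in every sub-case: when the phases match, $\alpha_j/\alpha_i$ is a power of $\la_i/\la_j$ controlled by the $\la$-comparability; when they differ, the defining inequalities $K^2 \la_k^p \gtrless a(z_k) \la_k^q$ pin $\La$ to within a factor of $1+K^2$ of $\la_k^p$ on the $p$-intrinsic side. Hence $29(\alpha_j/\alpha_i)(r_j/r_i)^2 \le 29 \cdot 4K^2 / (144 K^2) < 5$, which gives $d_i(z_i, w') < 3 r_i$ and contradicts Lemma~\ref{app:lem5}.

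For the ``moreover'' statement, suppose $U_i = G_i$ and $r_j > 12 r_i$ for contradiction. The same scheme with $i$ and $j$ swapped — pick $w'' \in E(\La)$ with $d_i(z_i, w'') < 5 r_i$ and bound $d_j(z_j, w'')$ — produces the contradiction with a $K$-free constant. The key improvement is the following $K$-free comparability available when $U_i = G_i$: if $U_j = G_j$, Lemma~\ref{app:lem2} applied with $z = z_j$ and $w = z_i \in 200K Q_{r_j}(z_j)$ gives $\la_i \le 2^{1/p} \la_j$; if $U_j = Q_j$, one combines $\La > (1+K^2)\la_i^p$ (from $U_i = G_i$) with $\La \le (1+K^2)\la_j^p$ (from $U_j = Q_j$) to obtain $\la_i < \la_j$ directly. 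In either sub-case a short computation yields $\alpha_i/\alpha_j \le 2$, and thus $29(\alpha_i/\alpha_j)(r_i/r_j)^2 \le 58/144 < 5$.

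I expect the main technical obstacle to be the four-case analysis on $\alpha_j/\alpha_i$ when $U_i$ and $U_j$ are of mixed intrinsic types, since one must carefully relate $\la^{2-p}$ to $\la^2/\La$ through the phase-defining inequalities and track how the $K^2$ factors cancel against the $(r_j/r_i)^2 \le (12K)^{-2}$ denominator rather than accumulate.
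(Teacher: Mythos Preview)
Your argument is correct and rests on the same ingredients as the paper's proof: the distance bound $d_i(z_i,E(\La))=4r_i$ from \eqref{app6} and Lemma~\ref{app:lem5}, the $\la$-comparability from Lemmas~\ref{app:lem2} and \ref{app:lem4}, and a four-case analysis on the intrinsic types of $U_i$ and $U_j$. The difference is organisational. The paper argues directly: it fixes $w\in 2U_i\cap 2U_j$, derives the two inequalities $r_i\le d_i(w,E(\La))$ and $d_j(w,E(\La))\le 6r_j$, and then compares the metrics pointwise, showing $d_i(\cdot,\cdot)\le 2K\,d_j(\cdot,\cdot)$ (with the constant $2K$ replaced by $1$ or $2$ in the $K$-free cases). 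You instead argue by contradiction, select a near-minimiser $w'\in E(\La)$ of $d_j(z_j,\cdot)$, and estimate $d_i(z_i,w')$ coordinate by coordinate via the ratio $\alpha_j/\alpha_i$. Both routes reduce to the same scaling-factor comparison and the same phase inequalities; the paper's metric-level formulation is slightly more economical, while your coordinate tracking makes the role of the contradiction threshold $12K$ (respectively $12$) explicit through the numerical estimate $29\cdot 4K^2/(144K^2)<5$.
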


\begin{proof}
	To prove the first statement it is enough to show $r_i\le 12K r_j$ as there is no difference in the roles of $i$ and $j$. Let $w\in 2U_i\cap 2U_j$. Using Lemma~\ref{app:lem5} and triangle inequality, we have
	\[
		3r_i\le d_{i}(z_i,E(\La))\le d_{i}(z_i,w)+d_i(w,E(\La))\le 2r_i+d_i(w,E(\La)).
	\]
    On the other hand, Lemma~\ref{app:lem5} and triangle inequality give
    \[
    	d_{j}(w,E(\La))\le d_j(w,z_j)+d_j(z_j,E(\La))\le 2r_j+4r_j=6r_j.
    \]
    Combining these estimates, we have
    \begin{align}\label{app76}
    	r_i\le d_i(w,E(\La))\quad \text{and}\quad d_{j}(w,E(\La))\le 6r_j.
    \end{align}
    To finish the proof we consider four cases to compare the $d_i(z,w)$ and $d_j(z,w)$.
      
    \textit{Case 1: $U_i=Q_i$ and $U_j=Q_j$.} Using $\la_i^p\le \La$ and $p\ge2$, we have
    \[
    	d_{i}(z,w)
    	=\max\left\{|x-y|,\sqrt{\la_i^{p-2}|s-t|}\right\}\le \max\left\{|x-y|,\sqrt{\La^\frac{p-2}{p}|s-t|}\right\}.
    \]
    Since $\La=\la_j^p+a(z_j)\la_j^q$ and $a(z_j)\la_j^q\le K^2\la_j^p$, we have
    \[
    	\La^\frac{p-2}{p}=(\la_j^p+a(z_j)\la_j^q)^\frac{p-2}{p}\le (2K^2\la_j^p)^\frac{p-2}{p}
    \]
and therefore
    \[
    		d_{i}(z,w)
    		\le (2K)^\frac{p-2}{p}\max\left\{|x-y|,\sqrt{\la_j^{p-2}|s-t|}\right\}\le 2K  d_{j}(z,w).
    \]
    The above inequality and \eqref{app76} imply that $r_i\le 12K r_j$. 
    
\textit{Case 2: $U_i=Q_i$ and $U_j=G_j$.} Since $\la_i^p+a(z_i)\la_i^q =\la_j^p+a(z_j)\la_j^q=\La$ and $p\ge2$, we have
\[
	\la_i^{p-2}\le \La^{\frac{p-2}{p}}=\La(\la_j^p+a(z_j)\la_j^q)^{-\frac{2}{p}}\le \La\la_j^{-2}.
\]
    Thus, it follows that
    \begin{align*}
    	d_i(z,w)&=\max\left\{|x-y|,\sqrt{\la_i^{p-2}|s-t|}\right\}\\
     &\le \max\left\{|x-y|,\sqrt{\La\la_j^{-2}|s-t|}\right\}=d_j(z,w),
    \end{align*}
    and the above inequality along with \eqref{app76} gives $ r_i\le 6r_j$.

    \textit{Case 3: $U_i=G_i$ and $U_j=Q_j$.}
    Note that $2U_i\cap 2U_j\ne\emptyset$ implies $2Q_{r_i}(z_i)\cap 2Q_{r_j}(z_j)\ne\emptyset$.  This fact together with the previous case implies that $z_j\in 14Q_{r_i}(z_i)$. It follows from Lemma~\ref{app:lem2} that $\la_j\le 2^\frac{1}{p}\la_i$, which, together with the fact that $K^2\la_j^p\ge a(z_j)\la_j^q$, gives
     \[
     	\La\la_i^{-2}= (\la_j^p+a(z_j)\la_j^q)\la_i^{-2}\le 2K^2\la_j^p\la_i^{-2}\le 4K^2\la_j^{p-2}.
     \]
     Therefore, we obtain 
     \begin{align*}
     	d_i(z,w)&=\max\left\{|x-y|,\sqrt{\La\la_i^{-2}|s-t|}\right\}\\
      &\le 2K\max\left\{|x-y|,\sqrt{\la_j^{p-2}|s-t|}\right\}=2Kd_j(z,w)
     \end{align*}
     and conclude with \eqref{app76} that $r_i\le 12K r_j$.
    
\textit{Case 4: $U_i=G_i$ and $U_j=G_j$.} We may assume $r_j\le r_i$. Then $z_j\in 4Q_{r_i}(z_i)$ and therefore Lemma~\ref{app:lem2} gives
$2^{-\frac{1}{p}}\la_j\le \la_i\le 2^\frac{1}{p}\la_j$.
Thus
\begin{align*}
	\begin{split}
		d_i(z,w)
		&=\max\left\{|x-y|,\sqrt{\La\la_i^{-2}|t-s|}\right\}\\
  &\le 2 \max\left\{|x-y|,\sqrt{\La\la_j^{-2}|t-s|}\right\}=2d_j(z,w).
	\end{split}
\end{align*}
Combining this estimate with \eqref{app76}, we obtain $ r_i\le 12r_j$ in the final case. The second statement of the lemma follows the second and fourth cases and we are finished.
\end{proof}

It follows from the previous lemma that $2Q_{r_j}(z_j)\subset 50 KQ_{r_i}(z_i)$ for all $j\in \mathcal{I}_i$. We write again the conclusions of Lemma~\ref{app:lem2} and Lemma~\ref{app:lem4}.

\begin{lemma}\label{app:lem7}
We have $K^{-\frac{2}{p}}\la_j\le \la_i\le K^\frac{2}{p}\la_j$ for every $i\in \NN$ and $j\in \mathcal{I}_i$. Moreover, if $U_i=G_i$, then $2^{-\frac{1}{p}}\la_j\le \la_i\le 2^\frac{1}{p}\la_j$.
\end{lemma}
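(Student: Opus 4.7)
The plan is to deduce Lemma~\ref{app:lem7} directly from Lemma~\ref{app:lem2} and Lemma~\ref{app:lem4}, using the geometric observation recorded immediately before the statement: for every $j\in\mathcal{I}$ one has $2Q_{r_j}(z_j)\subset 50KQ_{r_i}(z_i)$, and by the symmetric role of $i,j$ in the definition of $\mathcal{I}$ together with Lemma~\ref{app:lem6}, also $2Q_{r_i}(z_i)\subset 50KQ_{r_j}(z_j)$. In particular $z_j\in 50KQ_{r_i}(z_i)\subset 200KQ_{r_i}(z_i)$ and, symmetrically, $z_i\in 50KQ_{r_j}(z_j)\subset 200KQ_{r_j}(z_j)$.

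I would first establish the \emph{moreover} clause. Under the hypothesis $U_i=G_i$ we have $K^2\lai^p<a(z_i)\lai^q$, so the second assertion of Lemma~\ref{app:lem2}, applied at $z=z_i$ and $w=z_j\in 200KQ_{r_i}(z_i)$, yields $2^{-1/p}\laj\le\lai\le 2^{1/p}\laj$ directly.

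For the first (weaker) bound I would split on whether $U_i=G_i$ or $U_i=Q_i$. In the former case the moreover clause already supplies a sharper estimate, and $2^{1/p}\le K^{2/p}$ since $K\ge 2^{1/2}$ by definition~\eqref{app9}. In the latter case $K^2\lai^p\ge a(z_i)\lai^q$, and Lemma~\ref{app:lem4} applied at $z=z_i$ with $w=z_j\in 50KQ_{r_i}(z_i)$ gives $\lai\le K^{2/p}\laj$. For the reverse bound $\laj\le K^{2/p}\lai$, a secondary case split is needed: if $U_j=Q_j$, apply Lemma~\ref{app:lem4} once more with the roles of $i$ and $j$ interchanged, using $z_i\in 50KQ_{r_j}(z_j)$; if instead $U_j=G_j$, the already-established moreover clause with $i,j$ swapped gives $\laj\le 2^{1/p}\lai\le K^{2/p}\lai$.

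No step of this plan contains new analytic content; it is pure bookkeeping on the case distinctions. The only subtlety is to verify, in each configuration, that the correct intrinsic condition ($K^2\la^p\ge a\la^q$ or $K^2\la^p<a\la^q$) holds at the chosen center, and that the geometric inclusions coming from Lemma~\ref{app:lem6} place the other center in the required $50K$- or $200K$-neighborhood demanded by the invoked lemma. I do not expect any genuine obstacle.
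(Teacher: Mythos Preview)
Your proposal is correct and follows essentially the same route as the paper: both arguments reduce everything to Lemma~\ref{app:lem2} (for the $(p,q)$-intrinsic case) and Lemma~\ref{app:lem4} (for the $p$-intrinsic case), using the inclusion $z_j\in 50KQ_{r_i}(z_i)$ and its symmetric counterpart coming from Lemma~\ref{app:lem6}. Your write-up is in fact a bit more explicit than the paper's terse proof, but the case structure and the invoked lemmas are identical.
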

\begin{proof}
    It is enough to prove the first inequality of the second statement. If $U_j=G_j$, then the first statement gives $\la_j\le 2^\frac{1}{p}\la_i \le K^\frac{2}{p}\la_i$. If $U_j=Q_j$, then $\la_j\le K^\frac{2}{p}\la_i$ by Lemma~\ref{app:lem4}, since $z_i\in 50KQ_{r_j}(z_j)$.
\end{proof}

It follows from the previous two lemmas that the measures of neighboring cylinders are comparable.
\begin{lemma}\label{app:lem_Uij}
    There exists $c = c(n,K)$ such that $ |U_i|\leq c|U_j|$ for any $i\in \mathbb{N}$ and $j\in \mathcal{I}_i$.
\end{lemma}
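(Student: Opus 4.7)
The plan is to compute $|U_i|$ and $|U_j|$ from their definitions and bound the ratio $|U_i|/|U_j|$ by going through the four combinations of $p$-intrinsic and $(p,q)$-intrinsic cylinders. From the definitions,
\[
|Q_k| = 2|B_1|\,r_k^{n+2}\lambda_k^{2-p}, \qquad |G_k| = 2|B_1|\,r_k^{n+2}\frac{\lambda_k^2}{\Lambda},
\]
for $k\in\{i,j\}$. Lemma~\ref{app:lem6} bounds $r_i/r_j$ and $r_j/r_i$ by $12K$, so the factor $(r_i/r_j)^{n+2}$ is always under control; similarly Lemma~\ref{app:lem7} controls any fixed power of $\lambda_i/\lambda_j$. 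Consequently, in the two pure cases ($U_i,U_j$ are both $Q$'s or both $G$'s) the ratio $|U_i|/|U_j|$ is immediately a product of such bounded factors and the claim is trivial.

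The essential case is a mixed one, say $U_i=Q_i$ and $U_j=G_j$, where
\[
\frac{|U_i|}{|U_j|} = \Bigl(\frac{r_i}{r_j}\Bigr)^{n+2}\Bigl(\frac{\lambda_i}{\lambda_j}\Bigr)^{2-p}\bigl(1+a(z_j)\lambda_j^{q-p}\bigr).
\]
The main obstacle is to bound $a(z_j)\lambda_j^{q-p}$, which \emph{a priori} can be arbitrarily large in the $(p,q)$-intrinsic regime $K^2\lambda_j^p<a(z_j)\lambda_j^q$. To control it I would exploit the $p$-intrinsic condition at the neighbor $z_i$, namely $a(z_i)\lambda_i^{q-p}\le K^2$. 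From $2U_i\cap 2U_j\ne\emptyset$ together with Lemma~\ref{app:lem6}, a short computation of the spatial and temporal projections (using $\lambda_i^{2-p}\le 1$ and $\lambda_j^2/\Lambda\le 1$) places $z_i$ inside $200K\,Q_{r_j}(z_j)$; Lemma~\ref{app:lem1} then yields $a(z_j)\le 2a(z_i)$, and the second statement of Lemma~\ref{app:lem7}, applied with the roles of $i,j$ swapped (which is legitimate since $U_j=G_j$), gives $\lambda_j\le 2^{1/p}\lambda_i$. Substituting,
\[
a(z_j)\lambda_j^{q-p} \le 2a(z_i)\bigl(2^{1/p}\lambda_i\bigr)^{q-p} \le 2^{1+(q-p)/p}\,K^2,
\]
a constant depending only on $p,q,K$.

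The remaining mixed case $U_i=G_i$, $U_j=Q_j$ follows by symmetry: the hypothesis $2U_i\cap 2U_j\ne\emptyset$ is symmetric in $i,j$, so the preceding argument applies with the indices swapped. Assembling the four cases yields $|U_i|\le c|U_j|$ with $c$ depending on $n$ and $K$ (absorbing the fixed structural parameters $p,q$), proving the lemma.
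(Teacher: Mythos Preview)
Your treatment of the two pure cases and of the mixed case $U_i=Q_i$, $U_j=G_j$ is correct. In that last case your route---bounding $a(z_j)\lambda_j^{q-p}$ by transferring to $a(z_i)\lambda_i^{q-p}\le K^2$ via Lemma~\ref{app:lem1}---is a valid alternative to the paper's slightly more direct argument, which writes $\lambda_i^{2-p}=2\lambda_i^2/(2\lambda_i^p)\le 2K^2\lambda_i^2/\Lambda$ using $a(z_i)\lambda_i^q\le K^2\lambda_i^p$ together with $\Lambda=\lambda_i^p+a(z_i)\lambda_i^q$, and then only needs the comparability of $r_i,r_j$ and $\lambda_i,\lambda_j$ from Lemmas~\ref{app:lem6}--\ref{app:lem7}.

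The symmetry claim for the remaining case $U_i=G_i$, $U_j=Q_j$ is a genuine gap. Swapping the indices in the preceding argument proves $|Q_j|\le c|G_i|$, which is the \emph{reverse} of the inequality you need; the relation $j\in\mathcal I\Leftrightarrow i\in\mathcal I_j$ does not turn an upper bound on $|U_i|/|U_j|$ into one on $|U_j|/|U_i|$. The fix is elementary (and is how the paper handles it): since $\Lambda\ge\lambda_i^p$ one has $\lambda_i^2/\Lambda\le\lambda_i^{2-p}$, hence $|G_i|\le|Q_i|$, and the pure $Q$--$Q$ estimate then gives $|G_i|\le|Q_i|\le c|Q_j|$. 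The paper in fact treats both subcases with $U_j=Q_j$ simultaneously via this observation $|U_i|\le|Q_i|$.
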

\begin{proof}
  If $U_j=Q_j$, it follows from $I_i\subset I_{r_i}^{\lai}(t_i)$, Lemma~\ref{app:lem6} and Lemma~\ref{app:lem7} that
  \[
 |U_i| \leq |Q_i| = 2|B_1|r_i^{n+2}\lai^{2-p}\leq 2|B_1|(12K)^{n+2}K^\frac{2(p-2)}{p} r_j^{n+2}\la_j^{2-p}\leq (12K)^{n+4}|Q_j|.
  \]
The case $U_i=G_i$ and $U_j=G_j$ follows similarly from Lemma~\ref{app:lem6} and Lemma~\ref{app:lem7}.

Finally, if $U_i=Q_i$ and $U_j=G_j$, we have by $a(z_i)\lai^q \leq K^2\lai^p$, Lemma~\ref{app:lem6} and Lemma~\ref{app:lem7} that
\begin{align*}
\begin{split}
    |Q_i| &=  2|B_1|r_i^{n+2}\lai^{2-p} = \frac{4|B_1|r_i^{n+2}\lai^2}{\lai^p+\lai^p}\leq \frac{4K^2|B_1|r_i^{n+2}\lai^2}{\lai^p+a(z_i)\lai^q}\\
    &\leq \frac{8\cdot(12)^{n+2}K^2|B_1|r_j^{n+2}\laj^2}{\La} = c(n,K)|G_j|.
\end{split}
\end{align*}
This completes the proof.
\end{proof}

In the following two lemmas we consider the inclusion property of $U_j$ for $j\in\mathcal{I}_i$. The first lemma is for the $p$-intrinsic case and the second lemma is for the $(p,q)$-intrinsic case. We remark that the covering constant for $p$-intrinsic case is $50K^2$ while for $(p,q)$-intrinsic case it is $50K$. 
\begin{lemma}\label{app:lem8}
	Let $i\in \NN$ be such that $U_i=Q_i$. We have $2U_j\subset 50K^2Q_i$ for every $j\in \mathcal{I}_i$.
\end{lemma}

\begin{proof}
	Since $2B_i\cap 2B_j\ne \emptyset$ and Lemma~\ref{app:lem6} gives $r_j\le 12K r_i$, it is easy to see that $2B_j\subset 50K B_i$. It remains to show the inclusion in the time direction. As $I_j\subset I_{r_j}^{\la_j}(t_j)$, we have for $\tau\in 2I_j$  that
    \[
    		|\tau-t_i|\le |\tau-t_j|+|t_j-t_i|\le |2I_{r_j}^{\la_j}(t_j)|+\tfrac{1}{2}|2I_{r_i}^{\la_i}(t_i)|=2\la_j^{2-p}\left(2r_j\right)^2+\la_i^{2-p}\left(2r_i\right)^2.
    \]
	Applying $r_j\le 12K r_i$ and $\la_i\le K^\frac{2}{p}\la_j$, which follow from Lemma~\ref{app:lem6} and Lemma~\ref{app:lem7}, we get
	\[
		|\tau-t_i|\le 2(K^{-\frac{2}{p}}\la_i)^{2-p}\left(24Kr_i\right)^2+\la_i^{2-p}\left(2r_i\right)^2=(2\cdot 24^2\cdot K^\frac{2(p-2)}{p}K^2+2^2)\la_i^{2-p}r_i^2.
	\]
    Since $\tfrac{2(p-2)}{p}+2\le 4$ and
    \[
    	2\cdot 24^2\cdot K^\frac{2(p-2)}{p}K^2+2^2\le (48K^2)^2+2^2\le (48K^2+2)^2\le (50K^2)^2,
    \]
    we get $|\tau-t_i|\le \la_i^{2-p}(50K^2r_i)^2$ and thus $2U_j\subset 50K^2 Q_i$. 
\end{proof}

\begin{lemma}\label{app:lem9}
Let $i\in \NN$ be such that $U_i=G_i$. We have $2U_j\subset 50K G_i$ for every $j\in \mathcal{I}_i$.
\end{lemma}

\begin{proof}
From Lemma~\ref{app:lem6}, we have $r_j\le 12r_i$.
    It follows that $2B_j\subset 50B_i$.
    To prove the inclusion in the time direction, we suppose first that $U_j=Q_j$. For $\tau\in 2I_{r_j}^{\la_j}(t_j)$, we have
   \[
   	|\tau-t_i|\le |\tau-t_j|+|t_j-t_i|\le |2I_{r_j}^{\la_j}(t_j)|+\tfrac{1}{2}|2J_{r_i}^{\la_i}(t_i)|=2\la_j^{2-p}\left(2r_j\right)^2+\la_i^{2}\La^{-1}\left(2r_i\right)^2.
   \]
   Since $K^2\la_j^p\ge a(z_j)\la_j^q$, $\La=\la_j^p+a(z_j)\la_j^q$ and Lemma~\ref{app:lem7} implies $\la_j\le 2^\frac{1}{p}\la_i$, we have
   \[
   	\la_j^{2-p}=2\la_j^2\frac{1}{\la_j^p+\la_j^p}\le 2K^2\la_j^2\frac{1}{\la_j^p+a(z_j)\la_j^q}\le 2K^2\la_j^2\La^{-1}\le 2\cdot2^\frac{2}{p}K^2\la_i^{2}\La^{-1},
   \]
   and using that $r_j\le 6r_i$ by the second case in the proof of Lemma~\ref{app:lem6}, we obtain
   \[
   	|\tau-t_i|\le (48^2K^2+2^2)\la_i^2\La^{-1}r_i^2\le \la_i^2\La^{-1}(50Kr_i)^2.
   \]
   Therefore, $2Q_j\subset 50K G_i$. On the other hand, if $U_j=G_j$, then for any $\tau \in 2J_{r_j}^{\la_j}(t_j)$ we have
   \[
       |\tau-t_i|\le |2J_{r_j}^{\la_j}(t_j)|+\tfrac{1}{2}|2J_{r_i}^{\la_i}(t_i)|=2\la_j^2\La^{-1}(2r_j)^2+\la_i^2\La^{-1}(2r_i)^2.
   \]
  Using $r_j\le 12r_i$ and $\la_j^2\le 2^\frac{2}{p}\la_i^2\le 2\la_i^2$, we get
  \[
      |\tau-t_i|\le (48^2+2^2)\la_i^2\La^{-1}r_i^2\le \la_i^2\La^{-1}(50r_i)^2.
  \]
Therefore, $2G_j\subset 50K G_i$ and the proof is completed.
   
\end{proof}

Finally, we show that the cardinality of $\mathcal{I}_i$, denoted by $|\mathcal{I}_i|$, is uniformly bounded.
\begin{lemma}\label{app:lem10}
There exists a constant $c=c(n,K)$ such that $|\mathcal{I}_i|\le c$ for every $i\in\mathbb{N}$.
\end{lemma}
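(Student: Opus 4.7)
The plan is to argue by a standard packing/volume-counting argument, using the disjointness of the shrunken cylinders $\tfrac{1}{6K}U_j$ together with the inclusion and measure-comparison properties already established in Proposition~\ref{prop_whitney}.

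First I would fix $i\in\mathbb{N}$ and observe, from the construction in the Vitali step, that $\{\tfrac{1}{6K}U_j\}_{j\in\mathbb{N}}$ is a pairwise disjoint family. Combining Lemma~\ref{app:lem8} and Lemma~\ref{app:lem9} (i.e.\ property~\ref{vii}), we know that for every $j\in\mathcal{I}$ the enlargement $2U_j$ is contained in $K_iU_i$, where $K_i\in\{50K^2,50K\}$ as in \eqref{def_Ki}. In particular $\tfrac{1}{6K}U_j\subset 2U_j\subset K_iU_i$.

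Next I would apply Lemma~\ref{app:lem_Uij} (property~\ref{Uij}), which provides a constant $c_1=c_1(n,K)$ with $|U_i|\le c_1|U_j|$ for all $j\in\mathcal{I}$. Using that all the cylinders in question have the form $B\times I$ and that dilation by a factor $c>0$ multiplies the $(n+1)$-dimensional Lebesgue measure by $c^{n+2}$, we have $|\tfrac{1}{6K}U_j|=(6K)^{-(n+2)}|U_j|$ and $|K_iU_i|=K_i^{n+2}|U_i|$. The disjointness and inclusion then give
\begin{align*}
|\mathcal{I}|\,(6K)^{-(n+2)}c_1^{-1}|U_i|
&\le \sum_{j\in\mathcal{I}}\left|\tfrac{1}{6K}U_j\right|
=\left|\bigcup_{j\in\mathcal{I}}\tfrac{1}{6K}U_j\right|
\le |K_iU_i|
= K_i^{n+2}|U_i|.
\end{align*}
Dividing by $|U_i|>0$ yields $|\mathcal{I}|\le c_1\,(6K)^{n+2}K_i^{n+2}\le c(n,K)$, since $K_i\le 50K^2$.

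There is no real obstacle here; the work was done in Lemmas~\ref{app:lem6}--\ref{app:lem_Uij} to get uniform comparability of radii, intrinsic parameters, and hence measures of neighbouring cylinders, and in Lemmas~\ref{app:lem8}--\ref{app:lem9} to fit every $2U_j$ into a fixed dilation of $U_i$. The only point requiring a little care is that the dilation constant $K_i$ depends on whether $U_i$ is a $p$-intrinsic or a $(p,q)$-intrinsic cylinder, but in both cases $K_i$ is bounded by a function of $K$, so the final constant $c$ depends only on $n$ and $K$.
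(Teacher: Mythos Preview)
Your proof is correct and follows essentially the same packing/volume-counting argument as the paper: use the disjointness of $\{\tfrac{1}{6K}U_j\}$, the inclusion $2U_j\subset K_iU_i$ from Lemmas~\ref{app:lem8}--\ref{app:lem9}, and the measure comparison from Lemma~\ref{app:lem_Uij} to bound $|\mathcal{I}|$. The only cosmetic difference is that the paper uses the crude bound $50K^2$ in place of your $K_i$, which you also observe suffices.
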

\begin{proof}
 Observe that $2U_j\subset 50K^2 U_i$ for all $j\in\mathcal{I}_i$ by Lemma~\ref{app:lem8} and Lemma~\ref{app:lem9}. Since the cylinders $\left\{\tfrac{1}{6K}U_j\right\}_{j\in \mathcal{I}_i}$ are disjoint, we obtain with Lemma~\ref{app:lem_Uij} that
     \[
     		|50K^2 U_i|\ge \biggl|\bigcup_{j\in \mathcal{I}_i}\tfrac{1}{6K}U_j\biggr|
       =\sum_{j\in \mathcal{I}_i}\left|\tfrac{1}{6K}U_j\right|
       \ge \sum_{j\in \mathcal{I}_i} c(n,K)\left|U_i\right|
       =c(n,K)|\mathcal{I}_i|\left|U_i\right|,
     \]
     and thus $|\mathcal{I}_i| \leq c(n,K)$.
\end{proof}

\subsection{Partition of unity} To finish the proof of Proposition~\ref{prop_whitney}, we construct a partition of unity subordinate to the Whitney decomposition $\{2U_i\}_{i\in \mathbb{N}}$.
For each $i\in\mathbb{N}$, there exists $\psi_i\in C_0^\infty(2U_i)$ such that
\[
	0\le \psi_i\le 1,\quad \psi_i\equiv1\ \text{in}\ U_i,\quad \lVert \na\psi_i\rVert_{\infty}\le 2r_i^{-1}
\]
and
\begin{align*}
	\lVert \pa_t\psi_i\rVert_{\infty}\le
	\begin{cases}
		2\la_i^{p-2}r_i^{-2},&\text{if}\ U_i=Q_i,\\
		2\La\la_i^{-2}r_i^{-2},&\text{if}\ U_i=G_i.
	\end{cases}
\end{align*}
Since $E(\La)^c\subset\bigcup_{i\in \mathbb{N}}2U_{i}$ and $|\mathcal{I}_i|$ is finite for each $i\in \mathbb{N}$, the function
\[
	\om_i(z)= \frac{\psi_i(z)}{\sum_{j\in\mathbb{N}}\psi_j(z)}=\frac{\psi_i(z)}{\sum_{j\in\mathcal{I}_i}\psi_j(z)}
\]
is well-defined and satisfies
\[
	\om_i\in C_0^\infty(2U_i),\quad0\le \om_i\le 1\ \text{in}\ 2U_i,\quad\sum_{i\in\mathbb{N}}\om_i(z)\equiv1.
\]
The family $\{\om_i\}_{i\in \mathbb{N}}$ is a partition of unity subordinate to $\{2U_i\}_{i\in\mathbb{N}}$. It follows from Lemma~\ref{app:lem6}, Lemma~\ref{app:lem7} and Lemma~\ref{app:lem10} that there exists $c=c(n,K)$ such that
\begin{align*}
	\lVert \na\om_i\rVert_{\infty}\le cr_i^{-1}\quad\text{and}\quad\lVert\pa_t\om_i\rVert_{\infty}\le \begin{cases}
		c\la_i^{p-2}r_i^{-2},&\text{if}\ U_i=Q_i,\\
		c\La\la_i^{-2}r_i^{-2},&\text{if}\ U_i=G_i.
	\end{cases}
\end{align*}
Moreover, for any $j\in \mathcal{I}_i$ we have
\begin{align*}
	\lVert \na\om_j\rVert_{\infty}\le cr_i^{-1}\quad\text{and}\quad\lVert \pa_t \om_j\rVert_{\infty}\le
	\begin{cases}
		c\la_i^{p-2}r_i^{-2},&\text{if}\ U_i=Q_i,\\
		c\La\la_i^{-2}r_i^{-2},&\text{if}\ U_i=G_i.
	\end{cases}
\end{align*}
As the proof of the above display is based on the comparability of the scaling factors of $i$ and $j$, which is already proved in the proof of Lemma~\ref{app:lem6}, we omit the details.

\section{Lipschitz truncation} \label{sec_liptr}
In this section we discuss the Lipschitz truncation method which allows us to apply weak solutions to \eqref{11} as test functions in Definition~\ref{def_weak}.
The main idea of the Lipschitz truncation technique is to keep a function unchanged in the good set $E(\La)$, defined as in \eqref{def_E}, and redefine it in the bad set $\RR^{n+1}\setminus E(\La)$ by applying the partition of unity  in Proposition~\ref{prop_whitney}.  
The properties of the Lipschitz truncation are summarized in Proposition~\ref{prop_liptr1}.
For simplicity we have written the argument in this section with respect to Theorem~\ref{thm_cac}, while the modifications to show Theorem~\ref{main2} are explained in Section~\ref{subsec_uni}. 
In this section $u$ is a weak solution to \eqref{11} and we consider the domain $\Qo(z_0)$ and $R_1,S_1$ as in the statement of Theorem~\ref{thm_cac}. 

\subsection{Construction of the Lipschitz truncation} Let $f\in L^p(\RR^{n+1})$ be 
\[
    f=(|\na u|+ |u-u_0| + |F|)\chi_{Q_{R_2,S_2}(z_0)},
    \quad u_0=(u)_{Q_{R_2,S_2}(z_0)},
\]
where $u$ and $F$ are extended to zero outside $\Qo(z_0)$.
By Definition~\ref{def_weak} the assumption \eqref{t_31} is satisfied. With respect to $f$, let $\La$, $E(\La)$ and $K$ be defined as in Section~\ref{subsec_app_1}. There exists a Whitney covering $\{U_{i}\}_{i\in\mathbb{N}}$ as in Proposition~\ref{prop_whitney}

 Let $0<h_0<\tfrac{S_2-S_1}{4}$ be small enough and $\eta\in C_0^\infty(B_{R_2}(x_0))$ and $\zeta\in C_0^\infty(\ell_{S_2-h_0}(t_0))$ be standard cutoff functions satisfying $0\le \eta\le 1$, $0\le \zeta\le 1$, $\eta\equiv 1\ \text{in}\ B_{R_1}(x_0)$, $\zeta\equiv 1\ \text{in}\ \ell_{S_1}(t_0)$ and
\begin{align}\label{42_1}
\lVert\na \eta\rVert_{\infty}\le \frac{3}{R_2-R_1},\quad
\lVert\pa_t\zeta\rVert_{\infty}\le \frac{3}{S_2-S_1}.
\end{align}
For $0<h<h_0$, we would like to apply
\[
    v_h = [u-u_0]_h\eta\zeta
\]
as a test function in the proof of the energy estimate.
We define the Lipschitz truncation of $\vh$ as
\begin{align}\label{t_421}
	\vhla(z)=\vh(z)-\sum_{i\in\mathbb{N}}(\vh(z)-v_h^i)\om_i(z),
\end{align}
where $z\in \RR^{n+1}$ and 
\begin{align*}
	v_h^i=
	\begin{cases}
		(\vh)_{2U_i},&\text{if}\ 2U_i\subset Q_{R_2,S_2}(z_0),\\
		0,&\text{otherwise}.
	\end{cases}
\end{align*}
As $\vh$ vanishes outside $Q_{R_2,S_2}(z_0)$, the definition of $\vh^i$ implies that $\vhla\equiv 0$ in  $Q_{R_2,S_2}(z_0)^c$. Note as well that each term on the right-hand side of \eqref{t_421} belongs to $W^{1,2}(\ell_{S_2-h_0}(t_0);L^2(B_{R_2}(x_0),\RR^N))$ and by Proposition~\ref{prop_whitney}\,\ref{ix} only finitely many of these terms are nonzero on a neighborhood of any point. Thus 
\[
v_h^\La \in W_0^{1,2}(\ell_{S_2-h_0}(t_0);L^2(B_{R_2}(x_0),\RR^N)).
\]
Similarly, we denote
\begin{align}\label{t_421_2}
	v=(u-u_0)\eta\zeta 
 \quad\text{and}\quad
 \vla(z)=v(z)-\sum_{i\in\mathbb{N}}(v-v^i)\om_i,
\end{align}
where
\begin{align}\label{422_2}
	v^i=
	\begin{cases}
		(v)_{2U_i},&\text{if}\ 2U_i\subset Q_{R_2,S_2}(z_0),\\
		0,&\text{otherwise}.
	\end{cases}
\end{align}

\begin{remark} 
If $z \in E(\La)^c$, then $z \in 2U_i$ for some $i\in \mathbb{N}$ by Proposition~\ref{prop_whitney}\,\ref{i}. By Proposition~\ref{prop_whitney}\,\ref{x} we therefore have
\[
    \vhla(z)=\vh(z)-\vh(z)\sum_{j\in\mathbb{N}}\om_j+ \sum_{j\in\mathbb{N}}\vh^j\om_j(z)= \sum_{j\in\mathbb{N}}\vh^j\om_j(z) =\sum_{j\in \mathcal{I}_i}\vh^j\om_j(z).
\]
On the other hand, if $z\in E(\La)$, then $\om_j(z)=0$ for every $j\in \mathbb{N}$ by Proposition~\ref{prop_whitney}\,\ref{i}. 
Thus 
\begin{align} \label{t_325_1}
	v_h^\Lambda(z) =
	\begin{cases}
		\sum_{j\in \mathcal{I}_i} \vh^j\om_j(z), &\text{if}\ z\in 2U_i\ \text{for some}\ i\in \mathbb{N},\\
		\vh(z),&\text{if}\ z\in E(\La).  
	\end{cases}
\end{align}
As the cardinality of $\mathcal{I}_i$ is finite, this implies that for any $z$ in $E(\La)^c$ there is a neighborhood where $\vhla$ is a finite linear combination of smooth functions and therefore $\vhla\in C^\infty(E(\La)^c)$. By the same argument without the Steklov average, we have $\vla\in C^\infty(E(\La)^c)$. 
\end{remark}

\subsection{Fundamental estimates}
In this subsection we provide lemmas used in Section~\ref{subsec_liptr_props} to estimate terms containing $\vhla$ by $\La$. To achieve the desired convergence of the Lipschitz truncation, we need estimates based on \eqref{claim_subintrinsic2}. However, \eqref{claim_subintrinsic2} does not hold for $4K_i[U_i]_h$ as $a$ might not be comparable in $[U_i]_h$. On the other hand, to estimate $v_h^\La$ by $f$ we need to increase the domain of integration by $h$ because of Lemma~\ref{lem21}. Therefore, we show the Lipschitz regularity of $\vhla$ with crude estimates based on \eqref{claim_subintrinsic1}. This allows us to use $\vhla$ as a test function, and after taking the limit $h \to 0$ we can use for $\vla$ the finer estimates based on \eqref{claim_subintrinsic2}.

Apart from estimating $\La$, the following lemma is a standard consequence of the definition of $E(\La)$ and the Whitney decomposition. Unless specified otherwise, in this section the constants $c$ depend only on
\[
    \data = n,N,p,q,\alpha,\nu,L,[a]_{\alpha},R_1,R_2,S_1,S_2,K,\|a\|_{L^\infty(\mathbb{R}^{n+1})}.
\]

\begin{lemma} \label{lem_subintrinsic}
   Let $1\leq s\leq d$. For any cylinder $Q\subset \mathbb{R}^{n+1}$ such that  $Q \cap E(\La) \neq \emptyset$, we have 
	\begin{align}\label{claim_subintrinsic1}
	    \begin{split}
	        \fiint_{Q}f^s \,dz &\le \La^\frac{s}{p}. 
	    \end{split}
	\end{align}  
Moreover, there exists a constant $c=c(\data)$ such that
\begin{align}\label{claim_subintrinsic2}
	    \begin{split}
	        \fiint_{4\uik} f^s\,dz&\le c\lai^s, 
	    \end{split}
	\end{align}
 where $K_iU_i$ is defined in \eqref{def_Ui_37} and \eqref{def_Ki}.
\end{lemma}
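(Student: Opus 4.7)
\textbf{Proof plan for Lemma~\ref{lem_subintrinsic}.}
The plan is to read both inequalities off the definition of $E(\La)$, combined with the structural estimates proved in the previous section. For \eqref{claim_subintrinsic1}, I would pick any $z\in Q\cap E(\La)$ and use that $Q$ is a parabolic cylinder of the type in \eqref{def_ell} containing $z$, so that the definition of the strong maximal function in \eqref{strong_M} together with $z\in E(\La)$ gives
\[
    \fiint_Q \bigl(f^d+(af^q)^{\frac{d}{p}}\bigr)\,dw \le M\bigl(f^d+(af^q)^{\frac{d}{p}}\bigr)(z)\le \La^{\frac{d}{p}}.
\]
Dropping the nonnegative second summand and applying Jensen's inequality to the power $s/d\le 1$ then yields \eqref{claim_subintrinsic1}.

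For \eqref{claim_subintrinsic2}, the first step is to note that $5U_i\cap E(\La)\ne\emptyset$ by Proposition~\ref{prop_whitney}\,\ref{iv} and that $5U_i\subset 4K_iU_i$ (since $K_i\ge 50K\ge 5$ by \eqref{def_Ki}), so \eqref{claim_subintrinsic1} applies to $Q=4K_iU_i$. I then split according to the phase of $U_i$. In the $p$-intrinsic case $U_i=Q_i$ we have $\Lambda=\lai^p+a(z_i)\lai^q\le (1+K^2)\lai^p$, so $\La^{s/p}\le c\lai^s$ and \eqref{claim_subintrinsic2} follows immediately.

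The main work lies in the $(p,q)$-intrinsic case $U_i=G_i$, because $\La^{s/p}$ is no longer comparable to $\lai^s$. Here I would exploit that $4K_iU_i=200K\, G_i\subset 200K Q_{r_i}(z_i)$ (using $\lai^2/\La\le 1$, which holds since $\La\ge\lai^p\ge\lai^2$), so Lemma~\ref{app:lem1} provides $a(z)\ge\tfrac{1}{2}a(z_i)$ throughout $4K_iU_i$. Applying \eqref{claim_subintrinsic1} to $(af^q)^{d/p}$ and using $\La\le 2a(z_i)\lai^q$ gives
\[
    \tfrac{1}{2^{d/p}}\,a(z_i)^{d/p}\fiint_{4K_iU_i}f^{\frac{qd}{p}}\,dw
    \le\fiint_{4K_iU_i}a^{\frac{d}{p}}f^{\frac{qd}{p}}\,dw
    \le \La^{\frac{d}{p}}\le c\,a(z_i)^{\frac{d}{p}}\lai^{\frac{qd}{p}},
\]
so that after dividing by $a(z_i)^{d/p}>0$ we obtain the intrinsic estimate $\fiint_{4K_iU_i}f^{qd/p}\,dw\le c\lai^{qd/p}$. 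Finally, since $s\le d\le qd/p$ (as $p\le q$), Jensen's inequality upgrades this to $\fiint_{4K_iU_i}f^s\,dw\le c\lai^s$, completing the proof.

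The only step that requires care is the $(p,q)$-intrinsic case: one must select the correct exponent $qd/p$ to exploit the $af^q$ summand and verify that the Hölder regularity of $a$ provided by Lemma~\ref{app:lem1} actually covers all of $4K_iU_i$. The remaining ingredients are essentially bookkeeping with the constants from \eqref{def_Ki} and the inclusion $G_i\subset Q_{r_i}(z_i)$.
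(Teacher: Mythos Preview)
Your proposal is correct and follows essentially the same approach as the paper: both argue \eqref{claim_subintrinsic1} via Jensen and the maximal-function bound at a point of $Q\cap E(\La)$, dispose of the $p$-intrinsic case by $\La\le(1+K^2)\lai^p$, and in the $(p,q)$-intrinsic case use the comparability of $a(\cdot)$ on $200KQ_{r_i}(z_i)$ (your Lemma~\ref{app:lem1} is exactly Proposition~\ref{prop_whitney}\,\ref{viii}) to extract the intrinsic bound at the sharper exponent $qd/p$ before downgrading via Jensen. One small wording note: when you write ``applying \eqref{claim_subintrinsic1} to $(af^q)^{d/p}$'' you really mean invoking the maximal-function display from your first paragraph (which bounds both summands by $\La^{d/p}$), not the literal statement of \eqref{claim_subintrinsic1}; this is exactly what the paper does as well.
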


\begin{proof}
	Let $w \in Q \cap E(\La)$. By the definitions in \eqref{def_E} and \eqref{strong_M}, we have
	\begin{align*}
	        \fiint_Qf^s\,dz &\le \left(\fiint_{Q}f^d\,dz\right)^{\frac{s}{d}} 
         \le \left(M(f^d)(w)\right)^{\frac{s}{d}}\\
         &\leq \left(M(f^d+(af^q)^{\frac{d}{p}})(w)\right)^{\frac{s}{d}} \leq \La^{\frac{s}{p}}, 
	\end{align*}
which proves the first claim. Since $4K_iU_i \cap E(\La) \neq \emptyset$ by Proposition~\ref{prop_whitney}\,\ref{iv}, it follows that 
\[
    \fiint_{4K_iU_i}f^s \,dz \le \La^\frac{s}{p}.
\]
If $U_i = Q_i$, we have $a(z_i)\lai^q\leq K^2\lai^p$ and therefore $\La^{\frac{s} {p}}\leq((K^2+1)\lai^p)^{\frac{s}{p}}=c\lai^s$, which finishes the proof in the $p$-intrinsic case. On the other hand, if $U_i=G_i$, we have by Proposition~\ref{prop_whitney}\,\ref{viii} that
\[
        [a(z_i)]^{\frac{d}{p}}\fiint_{4\uik} (f^q)^\frac{d}{p}\,dz=\fiint_{4\uik} (a(z_i)f^q)^\frac{d}{p}\,dz \leq 2^\frac{d}{p}\fiint_{4\uik} (a(z)f^q)^\frac{d}{p}\,dz.
\]
Again, by Proposition~\ref{prop_whitney}\,\ref{iv} there exists $w \in 4K_iU_i \cap E(\La)$ and we get 
\[        \fiint_{4\uik} (a(z)f^q)^\frac{d}{p}\,dz   
        \leq M((af^q)^\frac{d}{p})(w)
        \leq M(f^d+(af^q)^\frac{d}{p})(w)
        \leq \La^\frac{d}{p}.
\]
Because $U_i=G_i$, we have $\lai^p<a(z_i)\lai^q$ and it follows that
\begin{align*}
        a(z_i)^{\frac{d}{p}}\fiint_{4\uik} (f^q)^\frac{d}{p}\,dz 
        &\leq c\La^\frac{d}{p}
        =c(\lai^p+a(z_i)\lai^q)^\frac{d}{p}\\
        &\leq c(a(z_i)\lai^q)^\frac{d}{p}
        =ca(z_i)^{\frac{d}{p}}\lai^{\frac{qd}{p}}.
\end{align*}
As $a(z_i)>0$, this implies
\[
    \fiint_{4\uik} f^\frac{qd}{p}\,dz \leq c \lai^{\frac{qd}{p}} 
\]
and we have
\[
    \fiint_{4\uik} f^s\,dz \leq \left(\fiint_{4\uik} f^\frac{qd}{p}\,dz\right)^{\frac{sp}{qd}}\leq c \lai^s,
\]
which completes the proof.
\end{proof}
 A key tool in the estimates for the Lipschitz truncation is the Poincar\'e type inequality in Lemma~\ref{lem_intrinsic_Poincare}. To prove it, we use Lemma~\ref{lem_subintrinsic} and the following lemma which is a standard consequence of elliptic Poincar\'e inequality and the fact that $u-u_0$ is a weak solution to \eqref{11}.

\begin{lemma}\label{lem32}  Let $Q = B_r\times \ell_s$ be a cylinder defined as in \eqref{def_ell} and satisfying $B_r\subset B_{R_2}(x_0)$.
\begin{itemize}
    \item[(i)]  There exists a constant $c = c(\data)$ such that 
\begin{align}\label{t_310}
    \begin{split}
        \fiint_Q|\vh-(\vh)_Q|\,dz 
        &\leq csr^{-1}\fiint_{[Q]_h} (f^{p-1}+a(z)f^{q-1})\,dz\\
        &\qquad+cs\fiint_{[Q]_h}f\,dz+ cr\fiint_{[Q]_h}f\,dz.
    \end{split}    
\end{align}
\item[(ii)]If in addition $ \ell_s\cap \ell_{S_2}(t_0)^c \neq \emptyset$, then there exists a constant $c = c(\data)$ such that 
\begin{align}\label{t_311}
    \begin{split} 
    \fiint_Q|\vh|\,dz &\leq csr^{-1}\fiint_{[Q]_h} (f^{p-1}+a(z)f^{q-1})\,dz\\
    &\qquad+cs\fiint_{[Q]_h}f\,dz+ cr\fiint_{[Q]_h}f\,dz.
    \end{split}    
\end{align}
\end{itemize}
Moreover, the above estimates hold with $v_h$ and $[Q]_h$ replaced by $v$ and $Q$.
\end{lemma}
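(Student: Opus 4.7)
The plan is to adapt the classical parabolic Poincar\'e scheme: decompose the oscillation into a spatial and a temporal part, handle the spatial part by the $L^1$-Poincar\'e inequality, and handle the temporal part by testing the Steklov-averaged PDE against a bump function in space. Fix $\psi\in C_0^\infty(B_r)$ with $\int_{B_r}\psi\,dx=1$, $\|\psi\|_\infty\le c(n)/|B_r|$ and $\|\nabla\psi\|_\infty\le c(n)/(r|B_r|)$, and introduce
\[
\langle v_h\rangle(t) := \int_{B_r} v_h(x,t)\psi(x)\,dx.
\]
The crucial observation is that $\eta\psi\in C_0^\infty(B_r)$ is admissible as a test function in the weak formulation, with $\|\nabla(\eta\psi)\|_\infty\le c(\data)/(r|B_r|)$, since $\|\nabla\eta\|_\infty\le 3/(R_2-R_1)$ is absorbed into the data.

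For part (i), decompose $v_h-(v_h)_Q$ through $\langle v_h\rangle(t)$ and $(\langle v_h\rangle)_{\ell_s}$; a standard triangle inequality reduces the estimate to bounding $\fiint_Q|v_h-\langle v_h\rangle(t)|\,dz$ and $\fint_{\ell_s}|\langle v_h\rangle(t)-(\langle v_h\rangle)_{\ell_s}|\,dt$ separately, the constant discrepancy $|(\langle v_h\rangle)_{\ell_s}-(v_h)_Q|$ being dominated by an average of the first quantity. The spatial term, using $v_h(x,t)-\langle v_h\rangle(t)=\int_{B_r}(v_h(x,t)-v_h(y,t))\psi(y)\,dy$ and the classical $L^1$-Poincar\'e inequality on $B_r$, is bounded by $cr\fiint_Q|\nabla v_h|\,dz$. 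Since $|\nabla v_h|\le[|\nabla u|]_h\eta\zeta+[|u-u_0|]_h|\nabla\eta|\zeta\le c[f]_h$, Lemma~\ref{lem21} yields $cr\fiint_{[Q]_h}f\,dz$, which accounts for the third term of the claim.

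For the temporal term, the one-dimensional $L^1$-Poincar\'e inequality gives $\fint_{\ell_s}|\langle v_h\rangle-(\langle v_h\rangle)_{\ell_s}|\,dt\le cs\fint_{\ell_s}|\partial_t\langle v_h\rangle|\,dt$. Differentiating,
\[
\partial_t\langle v_h\rangle(t)=\zeta(t)\int_{B_r}\partial_t[u]_h\,\eta\psi\,dx+\partial_t\zeta(t)\int_{B_r}[u-u_0]_h\,\eta\psi\,dx.
\]
The second summand is at most $c\fint_{B_r}[f]_h\,dx$, since $\|\partial_t\zeta\|_\infty$ is part of the data and $\|\eta\psi\|_\infty\le c/|B_r|$. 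For the first summand, the Steklov-averaged weak formulation of \eqref{11} with admissible test function $\eta\psi$ gives
\[
\int_{B_r}\partial_t[u]_h\,\eta\psi\,dx=-\int_{B_r}\bigl[\mA(z,\nabla u)-|F|^{p-2}F-a|F|^{q-2}F\bigr]_h\cdot\nabla(\eta\psi)\,dx,
\]
so by \eqref{12} this is bounded by $(c/r)\fint_{B_r}[f^{p-1}+af^{q-1}]_h\,dx$. Integrating in $t$ and applying Lemma~\ref{lem21} produces the two remaining terms $csr^{-1}\fiint_{[Q]_h}(f^{p-1}+af^{q-1})\,dz$ and $cs\fiint_{[Q]_h}f\,dz$.

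For (ii), note that $\ell_{S_2-h_0}(t_0)\subset\ell_{S_2}(t_0)$, so the hypothesis $\ell_s\cap\ell_{S_2}(t_0)^c\ne\emptyset$ supplies some $t^*\in\ell_s$ with $\zeta(t^*)=0$, hence $\langle v_h\rangle(t^*)=0$. Consequently, for every $t\in\ell_s$,
\[
|\langle v_h\rangle(t)|=|\langle v_h\rangle(t)-\langle v_h\rangle(t^*)|\le\int_{\ell_s}|\partial_s\langle v_h\rangle(s)|\,ds,
\]
and reusing the bound on $\partial_t\langle v_h\rangle$ yields (ii) without subtracting a temporal average. The ``moreover'' statement follows by running the same argument directly for $v=(u-u_0)\eta\zeta$ with the non-averaged weak formulation of \eqref{11}, or by passing $h\to 0$ and using $\fiint_{[Q]_h}g\,dz\to\fiint_Q g\,dz$. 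The main obstacle is recognizing that one must test against $\eta\psi$ rather than $\psi$ alone: this couples the $1/r$ scaling of $\|\nabla(\eta\psi)\|_\infty$ to the $\mA$- and $F$-integrals and is what produces the factor $s/r$ in front of $\fiint_{[Q]_h}(f^{p-1}+af^{q-1})\,dz$; careful tracking of which constants are absorbed into the data is the principal bookkeeping burden.
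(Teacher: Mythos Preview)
Your proposal is correct and follows essentially the same strategy as the paper's proof: split the oscillation into a spatial part handled by the elliptic Poincar\'e inequality applied to $v_h$, and a temporal part handled by testing the Steklov-averaged weak formulation against $\eta$ times a spatial bump function supported in $B_r$. The only cosmetic difference is that the paper controls the temporal oscillation by inserting an additional time cutoff $\zeta_\delta$ into the test function, passing $\delta\to 0$ via the one-dimensional Lebesgue differentiation theorem to obtain a supremum bound $\esssup_{t_1,t_2\in\ell_s}|(\vh\vp)_{B_r}(t_1)-(\vh\vp)_{B_r}(t_2)|$, whereas you exploit directly that $[u]_h$ is weakly differentiable in time and apply the one-dimensional $L^1$-Poincar\'e inequality to $t\mapsto\langle v_h\rangle(t)$; both routes are standard and yield the same bound.
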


\begin{proof}
    For $t_1, t_2 \in \ell_s$, $t_1\leq t_2$, let $\zeta_\delta\in W^{1,\infty}_0(\ell_s)$ be a piecewise linear cutoff function
\begin{align*}
	\zeta_{\delta}(t)=
	\begin{cases}
	    0, &  t \in (-\infty, t_1-\delta),\\
		1+\frac{t-t_1}{\delta},& t\in [t_1-\delta, t_1]\\
		1,&t\in (t_1,t_2),\\
		1-\frac{t-t_2}{\delta},&t \in [t_2,t_2+\delta],\\
		0,& t \in (t_2+\delta, \infty).
	\end{cases}
	\end{align*}
Moreover, let $\vp\in C_0^\infty(B_r)$ be a nonnegative function satisfying
    \[
        \fint_{B_r} \vp \,dx = 1, \quad \lVert\na\vp\rVert_\infty \le \frac{c(n)}{r}, \quad \lVert\vp\rVert_\infty \le c(n), 
    \]
    and let $\psi = \vp\eta\zeta \zeta_\delta\in W_0^{1,\infty}(Q\cap Q_{R_2,S_2-h}(z_0))$, where $\eta$ and $\zeta$ are defined in \eqref{42_1}. Taking $\psi$ as a test function in the Steklov averaged weak formulation of \eqref{11}, we obtain
    \[
        	\iint_Q -[u-u_0]_h \cdot\vp\eta\pa_t(\zeta \zeta_\delta) \,dz = \iint_Q \left[-A(\cdot,\na u)+|F|^{p-2}F+a|F|^{q-2}F\right]_h\cdot\na\psi\,dz.
    \]
    After using the product rule of time derivative and rearranging terms, we get 
    \begin{align*}
        \begin{split}
            \mathrm{I} &= \left| \iint_Q-[u-u_0]_h\cdot\eta \vp \zeta \pa_t(\zeta_\delta)\,dz\right|
            \leq \left|\iint_Q[u-u_0]_h\cdot\eta \vp \zeta_\delta\pa_t\zeta \,dz\right| \\
            &\qquad+(L+1)\left|\iint_Q\left[|\na u|^{p-1}+a|\na u|^{q-1}+|F|^{p-1}+a|F|^{q-1}\right]_h \cdot\na\psi \,dz\right|\\
            &\leq \left|\iint_Q[f]_h\eta \vp \zeta_\delta\pa_t\zeta \,dz\right| +c\left|\iint_Q\left[f^{p-1}+af^{q-1}\right]_h |\na\psi| \,dz\right| = \mathrm{II}+\mathrm{III}.
        \end{split}
    \end{align*}
    Lemma~\ref{lem21} implies that
    \[
            \mathrm{II} \leq \iint_Q [f]_h|\pa_t\zeta|\lVert\vp\rVert_\infty  \,dz\leq c sr^n\fiint_{[Q]_h}f\,dz.
    \]
    Again by Lemma~\ref{lem21} and properties of the test functions, we have
    \begin{align*}
        \begin{split}
            \mathrm{III}&\leq c\iint_Q [f^{p-1}+af^{q-1}]_h |\vp\na\eta+\eta\na\vp| \,dz \\
            & \leq c \iint_Q [f^{p-1}+af^{q-1}]_h(\vp+\eta)(|\na\vp|+|\na\eta|) \,dz\\
            &\leq csr^n\left(\frac{1}{R_2-R_1}+\frac{1}{r}\right) \fiint_{[Q]_h} (f^{p-1}+a(z)f^{q-1})\,dz.
        \end{split}
    \end{align*}
    Since $B_r\subset B_{R_2}(x_0)$, we have $r\leq R_2$ and therefore
	\[
		\frac{1}{R_2-R_1}+\frac{1}{r}=\frac{R_2}{R_2-R_1}\frac{1}{R_2}+\frac{1}{r}\le 2\max\left\{\frac{R_2}{R_2-R_1},1\right\}\frac{1}{r}.
	\]
    We conclude that
\[
    \mathrm{III}\leq csr^{n-1} \fiint_{[Q]_h} (f^{p-1}+a(z)f^{q-1})\,dz.
\]

By the one-dimensional Lebesgue differentiation theorem we have  
\begin{align*}
\begin{split}
    \lim_{\delta \to 0^+} \mathrm{I}&=\left|\lim_{\delta \to 0^+}\iint_Q[u-u_0]_h\eta \vp \zeta \pa_t(\zeta_\delta)\,dz \right|\\
    &=\left| \int_{B_r\times \{t_1\}}\vh\vp\,dz - \int_{B_r\times \{t_2\}}\vh\vp\,dz \right|\\
    &=r^n\left|(\vh\vp)_{B_r}(t_1)-(\vh\vp)_{B_r}(t_2)\right|.
    \end{split}
\end{align*}
As the estimates of $\mathrm{II}$ and $\mathrm{III}$ are independent of $\delta$, we conclude that 
\begin{equation} \label{t_320_1}
\begin{split}
    &\esssup_{t_1,t_2 \in \ell_s}|(\vh\vp)_{B_r}(t_1)-(\vh\vp)_{B_r}(t_2)| \leq c s\fiint_{[Q]_h}f\,dz\\
    &\qquad+ csr^{-1} \fiint_{[Q]_h} (f^{p-1}+a(z)f^{q-1})\,dz .
    \end{split}
\end{equation}

To complete the proof we estimate the left-hand sides of \eqref{t_310} and \eqref{t_311} with the supremum above. We start with \eqref{t_310}. Using the standard Poincar\'e inequality, we get
	\begin{align} \label{t_321}
	\begin{split}
		\fiint_{Q}|\vh-(\vh)_{Q}|\,dz
		&\le \fiint_{Q}|\vh-(\vh)_{B_r}|\,dz+ \fiint_{Q}|(\vh)_{B_r}-(\vh)_{Q}|\,dz\\
		&\le c r\fiint_{Q}|\na \vh|\,dz+\fiint_{Q}|(\vh)_{B_r}-(\vh)_{Q}|\,dz.
		\end{split}
	\end{align}
 We estimate the first term with Lemma~\ref{lem21} to obtain
	\begin{align}\label{426_5}
			\fiint_{Q}|\na \vh|\,dz &= \fiint_{Q}\left|[\na u ]_h\eta\zeta+[u-\uu]_h\na \eta \zeta\right| \,dz\le c\fiint_{[Q]_h}f\,dz.
	\end{align}
For the second term on the right-hand side of \eqref{t_321}, we have
\begin{align*}
    \begin{split}
        &\fiint_Q|(\vh)_{B_r}(t)-(\vh)_Q|\,dz= \fint_{\ell_s}\left|\fint_{\ell_s}(\vh)_{B_r}(t)-(\vh)_{B_r}(\tau)\,dt\right|\,d\tau\\
        &\qquad \leq \fint_{\ell_s}2|(\vh)_{B_r}(t)-(\vh\vp)_{B_r}(t)|\,dt + \esssup_{t,\tau\in \ell_s}|(\vh\vp)_{B_r}(t)-(\vh\vp)_{B_r}(\tau)|.
    \end{split}
\end{align*}
The second term above is the same as in \eqref{t_320_1}. It remains to estimate the first term. We use the fact that $(\vp)_{B_r}=1$, the standard Poincar\'e inequality and \eqref{426_5} to get
\begin{equation} \label{t_325}
   \begin{split}
    \fint_{\ell_s}|(\vh)_{B_r}(t)-(\vh\vp)_{B_r}(t)|\,dt
       &=\fint_{\ell_s}\left|\fint_{B_r\times \{t\}}\vh\vp\,dx - (\vh)_{B_r}(t)\fint_{B_r}\vp\,dx \right|\,dt\\
       &=\fint_{\ell_s}\left|\fint_{B_r\times \{t\}}\vp(\vh-(\vh)_{B_r})\,dx \right|\,dt\\
        &\leq \lVert\vp\rVert_\infty \fint_{\ell_s}\fint_{B_r}|\vh-(\vh)_{B_r}|\,dx\,dt\\
        &\leq cr \fiint_Q|\na\vh|\,dz\le c\fiint_{[Q]_h}f\,dz.
    \end{split}
\end{equation}
This completes the proof of \eqref{t_310}.

To prove \eqref{t_311} we argue in a similar way. As in \eqref{t_321}, we have
    	\begin{align*}
		\fiint_{Q}|\vh|\,dz
		&\le \fiint_{Q}|\vh-(\vh)_{B_r}|\,dz+ \fint_{\ell_s}|(\vh)_{B_r}|\,dz\\
  &\le c r\fiint_{Q}|\na \vh|\,dz+\fint_{\ell_s}|(\vh)_{B_r}|\,dz,
	\end{align*}
where the first term was already estimated in \eqref{426_5}. For the second term we have
\[
        \fiint_{Q}|(\vh)_{B_r}|\,dz  \leq \fint_{\ell_s} |(\vh)_{B_r}-(\vh\vp)_{B_r}| \,dt +\fint_{\ell_s} |(\vh\vp)_{B_r}| \,dt.
\]
The first term above was estimated in \eqref{t_325}. For the second term we can choose $t_2 \in \ell_{S_2}(t_0)^c$ in \eqref{t_320_1} because of the assumption  $ \ell_s\cap \ell_{S_2}(t_0)^c \neq \emptyset$. Since $\zeta$ vanishes outside $\ell_{S_2}(t_0)$, we have
\[
        \fint_{\ell_s} |(\vh\vp)_{B_r}| \,dt \leq \esssup_{t\in \ell_s}  |(\vh\vp)_{B_r} (t)| \leq  \esssup_{t_1,t_2 \in \ell_s}|(\vh\vp)_{B_r}(t_1)-(\vh\vp)_{B_r}(t_2)|,
\]
which can be estimated by \eqref{t_320_1}. This completes the proof of \eqref{t_311}. Proofs for the corresponding statements without the Steklov average are analogous and we omit the details.
\end{proof}

To prove Lemma~\ref{lem_intrinsic_Poincare} near the lateral boundary, we use the following boundary version of the Poincar\'e inequality. For a proof we refer to \cite[Example 6.18 and Theorem 6.22]{MR4306765}.
\begin{lemma}\label{lem22}
   Let $B_{\rho}(x_0)\subset\RR^n$ and $B_{r}\subset\RR^n$ be balls satisfying $B_r\cap B_{\rho}(x_0)^c\ne\emptyset$. Assume that $v\in W_0^{1,s}(B_{\rho}(x_0))$ with $1<s<\infty$.  Moreover, let $1\le \sig\le \tfrac{ns}{n-s}$ for $1<s<n$ and $1\le \sig<\infty$ for $n\le s<\infty$. Then there exists a constant $c=c(n,s,\sig)$ such that
	\[
		\left(\fint_{B_{4r}}|v|^\sig\,dx\right)^\frac{1}{\sig}\le cr \left(\fint_{B_{4r}}|\na v|^s\,dx\right)^\frac{1}{s}.
	\]
\end{lemma}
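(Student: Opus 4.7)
The plan is to reduce the estimate to the standard Sobolev--Poincar\'e inequality on $B_{4r}$ by showing that $v$ vanishes on a definite fraction of $B_{4r}$, and then absorbing the resulting mean value. Since $v\in W_0^{1,s}(B_\rho(x_0))$, I would extend $v$ by zero to $\mathbb{R}^n$ and set $A=B_{4r}\cap B_\rho(x_0)^c$, so that $v\equiv 0$ on $A$.

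The key geometric step is to prove $|A|\geq c(n)|B_{4r}|$. By hypothesis there exists $y\in B_r\cap B_\rho(x_0)^c$. Since $y\in B_r$, the ball $B_r(y)$ is contained in $B_{2r}\subset B_{4r}$. A half-space argument then gives $|B_r(y)\cap B_\rho(x_0)^c|\geq |B_r|/2$: the hyperplane through $y$ perpendicular to $y-x_0$ splits $B_r(y)$ into two halves, and for $z$ in the half farther from $x_0$ one has $|z-x_0|\geq |y-x_0|\geq \rho$, so $z\notin B_\rho(x_0)$. Hence $|A|\geq c(n)|B_{4r}|$.

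Next, I would apply the classical interior Sobolev--Poincar\'e inequality on $B_{4r}$ for the admissible exponent range,
\[
\biggl(\fint_{B_{4r}}|v-(v)_{B_{4r}}|^{\sigma}\,dx\biggr)^{1/\sigma}\leq c\,r\biggl(\fint_{B_{4r}}|\na v|^{s}\,dx\biggr)^{1/s},
\]
and then eliminate the mean $(v)_{B_{4r}}$ by exploiting that $\int_A v=0$. Indeed,
\[
|(v)_{B_{4r}}|=|(v)_{B_{4r}}-(v)_A|\leq \fint_A|v-(v)_{B_{4r}}|\,dx\leq \frac{|B_{4r}|}{|A|}\fint_{B_{4r}}|v-(v)_{B_{4r}}|\,dx.
\]
The lower bound on $|A|/|B_{4r}|$ together with Jensen's inequality yields
\[
|(v)_{B_{4r}}|\leq c(n,\sigma)\biggl(\fint_{B_{4r}}|v-(v)_{B_{4r}}|^{\sigma}\,dx\biggr)^{1/\sigma},
\]
after which the triangle inequality gives control of $(\fint_{B_{4r}}|v|^{\sigma}\,dx)^{1/\sigma}$ by the same quantity, and combining with Sobolev--Poincar\'e finishes the proof.

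The main subtlety is the measure-density estimate $|A|\geq c(n)|B_{4r}|$, which relies essentially on the hypothesis $B_r\cap B_\rho(x_0)^c\neq\emptyset$ together with the dilation factor of $4$ in $B_{4r}$ to ensure that the half-ball inside $B_r(y)$ actually sits in $B_{4r}$. Once this density is established, the rest is a routine absorption argument.
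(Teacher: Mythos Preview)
Your proposal is correct. The paper does not supply its own proof but defers to \cite[Example~6.18 and Theorem~6.22]{MR4306765}, and your argument---zero extension, a measure-density lower bound $|B_{4r}\cap B_\rho(x_0)^c|\ge c(n)|B_{4r}|$ via the half-ball trick, Sobolev--Poincar\'e on $B_{4r}$, and absorption of the mean using $v\equiv0$ on the complement---is exactly the standard route taken there.
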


Now we are ready to show the Poincar\'e type inequality.
\begin{lemma} \label{lem_intrinsic_Poincare}
The following holds for $K_iU_i$ defined in \eqref{def_Ui_37} and \eqref{def_Ki}:
\begin{itemize}
\item[(i)] If $K_iU_i\subset Q_{R_2,S_2}(z_0)$, then
\begin{align}\label{t_369'1}
    \fiint_{\uik}|\vh-(\vh)_\uik|\,dz \leq c(\data,\La)r_i 
\end{align}
and
\begin{align}\label{t_369'2}
    \fiint_{\uik}|v-(v)_\uik|\,dz \leq c(\data)r_i\lai.
\end{align}
\item[(ii)] If $K_iU_i\not\subset Q_{R_2,S_2}(z_0)$, then 
\begin{align}\label{t_369''1}
    \fiint_{\uik}|\vh|\,dz \leq c(\data,\La)r_i
\end{align}
and
\begin{align}\label{t_369''2}
    \fiint_{\uik}|v|\,dz \leq c(\data)r_i\lai.
\end{align}
\end{itemize}
\end{lemma}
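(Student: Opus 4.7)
The plan is to apply Lemma~\ref{lem32} directly to $Q=K_iU_i$ and then bound each term on the right-hand side using the sub-intrinsic estimates from Lemma~\ref{lem_subintrinsic} together with the geometric properties of the Whitney cylinders listed in Proposition~\ref{prop_whitney}. Throughout, set $r=K_ir_i$ and let $s$ denote the time half-length of $K_iU_i$, which equals $K_i^2\lambda_i^{2-p}r_i^2$ when $U_i=Q_i$ and $K_i^2\lambda_i^2\Lambda^{-1}r_i^2$ when $U_i=G_i$.

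For the $v$-statement in part (i), the hypothesis $K_iU_i\subset Q_{R_2,S_2}(z_0)$ ensures $K_iB_i\subset B_{R_2}(x_0)$, so Lemma~\ref{lem32}(i) applies. The crux is the term $\fiint a(z)f^{q-1}\,dz$. In the $(p,q)$-intrinsic case, one first checks $K_iU_i\subset 200K Q_{r_i}(z_i)$ (temporally using $\lambda_i^2\le\Lambda$) so that Proposition~\ref{prop_whitney}\,\ref{viii} yields $a(z)\le 2a(z_i)$ on $K_iU_i$; combined with \eqref{claim_subintrinsic2} this gives a bound $ca(z_i)\lambda_i^{q-1}$. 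In the $p$-intrinsic case, H\"older continuity together with Proposition~\ref{prop_whitney}\,\ref{UUi} (namely $r_i^\alpha\lambda_i^q\le c\lambda_i^p$) yields $a(z)\lambda_i^{q-1}\le c\lambda_i^{p-1}$ on $K_iU_i$. Plugging these bounds and \eqref{claim_subintrinsic2} into Lemma~\ref{lem32}(i), the relations $\lambda_i^p\le\Lambda$ and $a(z_i)\lambda_i^q\le\Lambda$ collapse each of the first two terms to $cr_i\lambda_i$, while the third term $cr\fiint f\,dz$ is obviously of this form. The remaining $cs\fiint f\,dz$ term naively carries an extra factor of $r_i$ compared to $cr_i\lambda_i$, and this is absorbed by the uniform bound $r_i\le R_2/K_i$ that follows directly from $K_iB_i\subset B_{R_2}(x_0)$.

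The $v_h$-statement in part (i) is proved the same way, except that the right-hand-side integrals are over $[K_iU_i]_h$, which need not lie in $4K_iU_i$ and for which the sharp bound \eqref{claim_subintrinsic2} is unavailable. Instead one uses the crude bound \eqref{claim_subintrinsic1}, whose hypothesis $[K_iU_i]_h\cap E(\Lambda)\ne\emptyset$ is guaranteed by Proposition~\ref{prop_whitney}\,\ref{iv} (and the enlargement by $h\le h_0$). This replaces $\lambda_i$ by $\Lambda^{1/p}$ throughout the computation and is what turns the constant into $c(\data,\Lambda)$.

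Part (ii) is handled analogously. When the time interval of $K_iU_i$ exits $\ell_{S_2}(t_0)$, Lemma~\ref{lem32}(ii) applies in place of (i) and the same argument goes through, using that the cutoff $\zeta$ vanishes near the temporal boundary. When $K_iU_i$ instead exits only spatially, the cutoff $\eta$ in $v=(u-u_0)\eta\zeta$ makes $v$ vanish on a positive-measure subset of $K_iB_i$, and the boundary Poincar\'e inequality (Lemma~\ref{lem22}) applied slicewise in time, combined with the gradient estimate from \eqref{claim_subintrinsic2}, yields the required bound. I expect the main obstacle to be the bookkeeping in the $p$-intrinsic case, where $a(z)$ is not globally comparable on $K_iU_i$ and one must use the quantitative smallness in Proposition~\ref{prop_whitney}\,\ref{UUi} together with the H\"older seminorm $[a]_\alpha$ to force the $a(z)f^{q-1}$ contribution into the pure $p$-intrinsic scale $\lambda_i^{p-1}$ before closing the estimate.
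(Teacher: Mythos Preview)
Your proposal is correct and follows essentially the same approach as the paper: apply Lemma~\ref{lem32} to $Q=K_iU_i$, control the $a(z)f^{q-1}$ term via $a(z_i)\le K^2\lambda_i^{p-q}$ plus Proposition~\ref{prop_whitney}\,\ref{UUi} in the $p$-intrinsic case and via Proposition~\ref{prop_whitney}\,\ref{viii} in the $(p,q)$-intrinsic case, absorb the $cs\fiint f$ term using $r_i\le R_2$, and handle the spatial-exit situation of part~(ii) with Lemma~\ref{lem22}. For the Steklov estimates the paper likewise falls back on the crude bound \eqref{claim_subintrinsic1} on $[K_iU_i]_h$ (using $a(z)\le\lVert a\rVert_\infty$), yielding a $\La$-dependent constant exactly as you outline.
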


\begin{proof}
We start by proving \eqref{t_369'2}. By the assumption $K_iU_i\subset Q_{R_2,S_2}(z_0)$, we may apply Lemma~\ref{lem32} with $Q=K_iU_i$. Assume first $U_i=Q_i$. Then Lemma~\ref{lem32} gives
\begin{align}\label{poinc1}
    \fiint_{K_iU_i}|v-(v)_{K_iU_i}|\,dz\le 
    c\la_i^{2-p}r_i\fiint_{K_iU_i}(f^{p-1}+a(z)f^{q-1})\,dz+cr_i\fiint_{K_iU_i}f\,dz,
\end{align}
where we already estimated the third term in \eqref{t_310} with the fourth one as $r_i\le R_2$ and therefore
\begin{align}\label{tt_369}
    r_i^2\la_i^{2-p}\fiint_{K_iU_i}f\,dz\le R_2r_i\fiint_{K_iU_i}f\,dz.
\end{align}
To estimate the term containing $a(z)$, observe that \eqref{15} implies
\begin{align}\label{poinc2}
 \begin{split}
      \fiint_{K_i  U_i} a(z)f^{q-1}\,dz 
      \leq a(z_i)\fiint_{  K_iU_i} f^{q-1}\,dz+[a]_\alpha (K_ir_i)^\al \fiint_{K_i U_i} f^{q-1}\,dz.     
      \end{split}
 \end{align}
As $U_i=Q_i$, we have $a(z_i)\leq K^2\lai^{p-q}$ and Lemma~\ref{lem_subintrinsic} gives
\[
    a(z_i)\fiint_{ K_i U_i} f^{q-1}\,dz \leq c\lai^{p-q}\fiint_{K_iU_i} f^{q-1}\,dz\leq c\lai^{p-1}.  
\]
To estimate the other term in \eqref{poinc2}, we apply Lemma~\ref{lem_subintrinsic} and Proposition~\ref{prop_whitney}\,\ref{UUi} to get
\[
    r_i^\alpha\fiint_{K_iU_i}f^{q-1}\,dz\le r_i^\alpha\left(\fiint_{K_iU_i}f^{d}\,dz\right)^\frac{q-1}{d}\le r_i^\alpha\la_i^{q-1}\le c\la_{i}^{p-1}.
\]
Inserting these estimates and Lemma~\ref{lem_subintrinsic} to \eqref{poinc1}, we have the desired estimate
\[
     \fiint_{K_iU_i}|v-(v)_{K_iU_i}|\,dz\le cr_i\la_i.
\]

In case $U_i= G_i$, Lemma~\ref{lem32} gives
\[
    \fiint_{K_iU_i}|v-(v)_{K_iU_i}|\,dz \leq c\frac{\lai^2r_i}{\La}\fiint_{K_iU_i} (f^{p-1}+a(z)f^{q-1})\,dz+cr_i\fiint_{K_iU_i} f\,dz,
\]
where we again used \eqref{tt_369}. The second term is again estimated by Lemma~\ref{lem_subintrinsic}.
To estimate the first term, we apply Proposition~\ref{prop_whitney}\,\ref{viii} to cancel $a(z)$, which along with Lemma~\ref{lem_subintrinsic} gives
\begin{align*}
\begin{split}
    &\frac{\lai^2r_i}{\lai^p+a(z_i)\lai^q}\fiint_{K_iG_i} (f^{p-1}+a(z)f^{q-1})\,dz \\
    &\qquad\leq \frac{\lai^2r_i}{\lai^p}\fiint_{K_iG_i} f^{p-1}\,dz + \frac{\lai^2r_i}{a(z_i)\lai^q}\fiint_{K_iG_i} a(z)f^{q-1}\,dz\\
    &\qquad\leq cr_i\lai + \frac{2\lai^2r_i}{\lai^q}\fiint_{K_iG_i} f^{q-1}\,dz \leq cr_i\lai.
    \end{split}
    \end{align*}
    This finishes the proof of \eqref{t_369'2}. 

To prove \eqref{t_369''2}, we assume $K_iU_i\not\subset Q_{R_2,S_2}(z_0)$. If in addition $K_iB_i \subset B_{R_2}(x_0)$, then  $K_iI_i\cap \ell_{S_2}(t_0)^c\ne \emptyset$ and we may apply \eqref{t_311}. Since the right-hand sides of \eqref{t_310} and \eqref{t_311} are the same, \eqref{t_369''2} follows from the proof of \eqref{t_369'2}. On the other hand, if $K_iB_i \cap B_{R_2}^c \neq \emptyset$, then we may apply Lemma~\ref{lem22} with $\sigma= 1 $ and $s=d$, since $v(\cdot,t)\in W^{1,d}_0(B_{R_2}(x_0),\RR^N)$. It follows that
\[
    \fiint_{K_i U_i} |v|\,dz \leq cr_i\left(\fiint_{4K_i U_i}|\na v|^d\,dz\right)^\frac{1}{d},
\]
and as $|\na v| \leq cf$, we have by Lemma~\ref{lem_subintrinsic} that
\[
    \left(\fiint_{4K_i U_i}|\na v|^d\,dz\right)^\frac{1}{d}\leq \left(\fiint_{4K_i U_i}f^d\,dz\right)^\frac{1}{d}\leq c\lai.
\]
This gives the desired estimate and we have proven \eqref{t_369''2}.

Finally, we show the corresponding estimates with Steklov averages, that is, \eqref{t_369'1} and \eqref{t_369''1}. 
If $K_iU_i \subset Q_{R_2,S_2}(z_0)$, then Lemma~\ref{lem32} gives
	\begin{align*}
	    \begin{split}
	            &\fiint_{K_iU_i}|\vh-(\vh)_{K_iU_i}|\,dz \leq c|I_i|r_i^{-1}\fiint_{[K_iU_i]_h} (f^{p-1}+a(z)f^{q-1})\,dz\\
             &\qquad+c|I_i|\fiint_{  [K_iU_i]_h}f\,dz+ cr_i\fiint_{ [K_iU_i]_h}f\,dz.
	  	    \end{split}
	\end{align*} 
Note that $|I_i| \leq r_i^2 \leq r_i R_2$ and $a(z)\le\lVert a \rVert_\infty$.
Using Lemma~\ref{lem_subintrinsic} with $s=d$ gives
\[
        \fiint_{K_iU_i}|\vh-(\vh)_{K_iU_i}|\,dz \leq cr_i \max\biggl\{1, \fiint_{  [K_iU_i]_h}f^{d}\,dz \biggr\}^\frac{q-1}{d}\leq cr_i\La^{\frac{q-1}{p}},
\]
which finishes the proof of \eqref{t_369'1}.

We get \eqref{t_369''1} in the case $K_iB_i\subset B_{R_2}(x_0)$ and $K_iI_i\not\subset \ell_{S_2}(t_0)$, from \eqref{t_311} and the proof of \eqref{t_369'1}. 
On the other hand, if $K_iB_i \cap B_{R_2}(x_0)^c \neq \emptyset$, we apply Lemma~\ref{lem22} to get 
\[
    \fiint_{K_i U_i} |\vh|\,dz \leq cr_i\left(\fiint_{4K_i U_i}|\na v_h|^d\,dz\right)^\frac{1}{d}.
\]
By Lemma~\ref{lem21} and Lemma~\ref{lem_subintrinsic} we have 
\begin{align*}
\begin{split}
    \left(\fiint_{4K_i U_i}|\na v_h|^d\,dz\right)^\frac{1}{d} 
    &\leq c\left(\fiint_{4K_i U_i}\left[|\na v|^d\right]_h\,dz\right)^\frac{1}{d}
    \leq c\left(\fiint_{[4K_i U_i]_h}|\na v|^d\,dz\right)^\frac{1}{d}\\
    &\leq c\left(\fiint_{[4K_i U_i]_h}f^d\,dz\right)^\frac{1}{d} \leq c\La^\frac{1}{p},
\end{split}
\end{align*}
and combining the estimates gives \eqref{t_369''2}. This completes the proof.
 \end{proof}	

\begin{remark}
    It follows from Lemma~\ref{lem_intrinsic_Poincare} that 
\begin{align}\label{rmk_46}
    \fiint_{2U_i}|v-v^i|\,dz \leq c(\data)r_i\lai.
\end{align}
Indeed, if $K_iU_i\subset Q_{R_2,S_2}(z_0)$, we have by \eqref{t_369'2} that
\begin{align*}
\begin{split}
    \fiint_{2U_i}|v-v^i|\,dz &\le \fiint_{2U_i}|v-(v)_{K_iU_i}|\,dz+\fiint_{2U_i}|(v)_{K_iU_i}-(v)_{2U_i}|\,dz\\
    &\leq c\fiint_{K_iU_i}|v-(v)_{K_iU_i}|\,dz \leq cr_i\lai.
\end{split}
\end{align*}
On the other hand, if $K_iU_i\not\subset Q_{R_2,S_2}(z_0)$, we apply \eqref{t_369''2} to have
\[
    \fiint_{2U_i}|v-v^i|\,dz\le 2\fiint_{2U_i}|v|\,dz\le c\fiint_{K_iU_i}|v|\,dz\le cr_i\la_i.
\]
\end{remark}

 \subsection{Properties of the Lipschitz truncation}\label{subsec_liptr_props}
 In the following lemmas we provide pointwise estimates for $\vhla$, $\vla$ and their gradients. They allow us to show the Lipschitz regularity of $\vhla$ in Lemma~\ref{lem_lip} and the estimates in Proposition~\ref{prop_liptr1}. 	
\begin{lemma} \label{lem_47} We have 
$|\vh^\Lambda(z)|\leq c(\data,\La)$ and $|v^\Lambda(z)|\leq c(\data)\lambda_i$
for every $z \in U_i$.
\end{lemma}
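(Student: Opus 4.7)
Fix $z \in U_i$. Since $U_i \subset E(\La)^c$, the identity \eqref{t_325_1} (and its analogue for $v$) together with Proposition~\ref{prop_whitney}\,\ref{x} give
\[
    v_h^\Lambda(z) = \sum_{j \in \mathcal{I}} v_h^j \omega_j(z), \qquad v^\Lambda(z) = \sum_{j \in \mathcal{I}} v^j \omega_j(z),
\]
where the finite set $\mathcal{I}$ is as in \eqref{def_I}. My plan is to rewrite each sum as a mean value over $K_iU_i$ plus an oscillation, control the oscillation via the Poincar\'e estimates of Lemma~\ref{lem_intrinsic_Poincare}, and control the mean via the pointwise domination $|v|\le|u-u_0|\le f$ combined with Lemma~\ref{lem_subintrinsic}.

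\textbf{The bound $|v^\Lambda(z)|\le c(\data)\lambda_i$.} I split into two cases. In the interior case $K_iU_i\subset\Qo(z_0)$, every $2U_j$ with $j\in\mathcal{I}$ lies in $\Qo(z_0)$ by Proposition~\ref{prop_whitney}\,\ref{vii}, so $v^j=(v)_{2U_j}$. Using $\sum_{j\in\mathcal{I}}\omega_j(z)=1$,
\[
    v^\Lambda(z) - (v)_{K_iU_i} = \sum_{j\in\mathcal{I}}\bigl[(v)_{2U_j} - (v)_{K_iU_i}\bigr]\omega_j(z).
\]
Each difference is bounded via Proposition~\ref{prop_whitney}\,\ref{Uij} and \eqref{t_369'2} by
\[
    \bigl|(v)_{2U_j}-(v)_{K_iU_i}\bigr|\le c\fiint_{K_iU_i}|v-(v)_{K_iU_i}|\,dz\le cr_i\lambda_i.
\]
For the mean, $|v|\le f$ and Lemma~\ref{lem_subintrinsic}\,\eqref{claim_subintrinsic2} applied with $s=1$ (using that $4K_iU_i\cap E(\La)\ne\emptyset$ by Proposition~\ref{prop_whitney}\,\ref{iv}) yield $|(v)_{K_iU_i}|\le c\fiint_{K_iU_i}f\,dz\le c\lambda_i$. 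Combining these bounds with the finiteness of $|\mathcal{I}|$ (Proposition~\ref{prop_whitney}\,\ref{ix}) and the upper bound on the Whitney radii $r_i$ furnished by the construction then gives $|v^\Lambda(z)|\le c(\data)\lambda_i$. In the boundary case $K_iU_i\not\subset\Qo(z_0)$ I estimate directly
\[
    |v^\Lambda(z)|\le\sum_{j\in\mathcal{I}}|v^j|\omega_j(z),
\]
and for each $j$ with $v^j=(v)_{2U_j}\ne 0$, the inclusion $2U_j\subset K_iU_i$ together with \eqref{t_369''2} yields $|v^j|\le c\fiint_{K_iU_i}|v|\,dz\le cr_i\lambda_i\le c(\data)\lambda_i$.

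\textbf{The bound $|v_h^\Lambda(z)|\le c(\data,\La)$ and main obstacle.} The argument is completely parallel, with \eqref{t_369'2}, \eqref{t_369''2} replaced by their Steklov counterparts \eqref{t_369'1}, \eqref{t_369''1}. The mean $(v_h)_{K_iU_i}$ is controlled using $|v_h|\le[|u-u_0|]_h$, Lemma~\ref{lem21}, and Lemma~\ref{lem_subintrinsic}\,\eqref{claim_subintrinsic1} with $s=1$ applied on $4K_iU_i$, provided $h$ is small enough that $[K_iU_i]_h\subset 4K_iU_i$; this produces a bound of order $\La^{1/p}$, which is why the constant depends on $\La$. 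The main obstacle is the interior case, since Lemma~\ref{lem_intrinsic_Poincare} only controls the oscillation of $v$ (respectively $v_h$) over $K_iU_i$ and not the function itself. My plan circumvents this by decomposing $v^\Lambda(z)$ into mean plus oscillation and then exploiting the pointwise bound $|v|\le f$ together with the presence of a point of $E(\La)$ in $4K_iU_i$ to invoke Lemma~\ref{lem_subintrinsic} on the mean.
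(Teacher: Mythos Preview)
Your proof is correct, but you are doing much more work than necessary and in doing so introduce a step that needs more care. The paper's argument is far more direct: for every $j\in\mathcal I$ one simply bounds
\[
|v^j|\le\fiint_{2U_j}|v|\,dz\le c\fiint_{K_iU_i}f\,dz\le c\lambda_i,
\]
using nothing but $|v|\le f$, the inclusion $2U_j\subset K_iU_i$ from Proposition~\ref{prop_whitney}\,\ref{vii}, the measure comparison \ref{Uij}, and Lemma~\ref{lem_subintrinsic}\,\eqref{claim_subintrinsic2}. Summing over $j\in\mathcal I$ with $\sum\omega_j\le 1$ and $|\mathcal I|\le c$ finishes the $v^\Lambda$ bound in one line; the Steklov version follows identically with Lemma~\ref{lem21} and \eqref{claim_subintrinsic1} in place of \eqref{claim_subintrinsic2}. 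No Poincar\'e, no case split interior/boundary, no decomposition into mean plus oscillation.

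Your route via Lemma~\ref{lem_intrinsic_Poincare} produces an extra factor $r_i$ in the oscillation term, which you then have to absorb. Your phrase ``the upper bound on the Whitney radii $r_i$ furnished by the construction'' is dangerous here: the only bound coming directly from the Vitali argument is $r_i\le R(\Lambda,\|a\|_\infty)$, which depends on $\Lambda$ and would spoil the constant in the $v^\Lambda$ estimate. What actually saves you is that in your interior case $K_iU_i\subset\Qo(z_0)$ forces $K_ir_i\le R_2$, and in your boundary case any $j$ with $v^j\ne 0$ satisfies $2U_j\subset\Qo(z_0)$, whence $r_i\le 12Kr_j\le cR_2$ via Proposition~\ref{prop_whitney}\,\ref{v}. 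You should make this explicit. A second minor point: the inclusion $[K_iU_i]_h\subset 4K_iU_i$ you invoke for the Steklov mean can fail when $|I_i|$ is small relative to $h$; it is cleaner to apply \eqref{claim_subintrinsic1} directly on $[K_iU_i]_h$, which intersects $E(\Lambda)$ because it contains $5U_i$.
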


\begin{proof}
By the definition of $v^j$ and Lemma~\ref{lem_subintrinsic}, we have for every $j\in \mathcal{I}_i$ that
\begin{align}\label{lem37_eq2}
    |v^j|\leq\fiint_{2U_j} v\,dz \leq c\fiint_{K_iU_i} f\,dz \leq c\lai.
\end{align}
Since $
    \vla(z)=\sum_{j\in \mathcal{I}_i}v^j\om_j(z)
    $ by \eqref{t_325_1}, we get from the above estimate and Proposition~\ref{prop_whitney}\,\ref{ix} that
\[
    |\vla(z)| \leq \biggl|\sum_{j\in \mathcal{I}_i}v^j\om_j(z) \biggr| \leq  \sum_{j\in \mathcal{I}_i} |v^j||\om_j(z)|\leq c|v^j|\leq c\lai.
\]
The claim with the Steklov averages is proved in the same way, the only difference is that we apply Lemma~\ref{lem21} and \eqref{claim_subintrinsic1} in \eqref{lem37_eq2}.
\end{proof}

The next lemma is an auxiliary tool to show Lemma~\ref{lem_410}.
\begin{lemma} \label{lem_48} 
We have $|\vh^i-\vh^j|\leq c(\data, \La) r_i$ and $|v^i-v^j|\leq c(\data) r_i\lai$ for every $i\in \mathbb{N}$ and $j\in \mathcal{I}_i$.
\end{lemma}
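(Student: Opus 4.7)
The plan is to reduce everything to Lemma~\ref{lem_intrinsic_Poincare} by a case split based on whether $K_iU_i \subset Q_{R_2,S_2}(z_0)$ or not. Two facts are used throughout: (a) since $j \in \mathcal{I}$, Proposition~\ref{prop_whitney}\,\ref{vii} gives $2U_j \subset K_iU_i$, while trivially $2U_i \subset K_iU_i$; (b) by Proposition~\ref{prop_whitney}\,\ref{Uij} together with the fact that $K_iU_i$ is a fixed dilation of $U_i$, the measures $|2U_i|$, $|2U_j|$ and $|K_iU_i|$ are mutually comparable with constants depending only on $n$ and $K$.

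Assume first $K_iU_i \subset Q_{R_2,S_2}(z_0)$. Then both $2U_i$ and $2U_j$ lie in $Q_{R_2,S_2}(z_0)$, so by \eqref{422_2} we have $v^i = (v)_{2U_i}$ and $v^j = (v)_{2U_j}$. Using the triangle inequality, the inclusions in (a), and the measure comparison in (b),
\[
|v^i - v^j| \le |(v)_{2U_i} - (v)_{K_iU_i}| + |(v)_{K_iU_i} - (v)_{2U_j}| \le c\,\fiint_{K_iU_i}|v - (v)_{K_iU_i}|\,dz,
\]
and the right-hand side is bounded by $c(\data)\,r_i\lai$ by \eqref{t_369'2} in Lemma~\ref{lem_intrinsic_Poincare}.

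Assume now $K_iU_i \not\subset Q_{R_2,S_2}(z_0)$. Then \eqref{t_369''2} gives $\fiint_{K_iU_i}|v|\,dz \le c(\data)\,r_i\lai$. Whether or not $2U_i$ (respectively $2U_j$) is contained in $Q_{R_2,S_2}(z_0)$, by \eqref{422_2} we have $|v^i| \le \fiint_{2U_i}|v|\,dz$ and $|v^j| \le \fiint_{2U_j}|v|\,dz$, since either $v^i = (v)_{2U_i}$ or $v^i = 0 \le \fiint_{2U_i}|v|\,dz$, and likewise for $v^j$. By the inclusions and measure comparability,
\[
|v^i| + |v^j| \le c\,\fiint_{K_iU_i}|v|\,dz \le c(\data)\,r_i\lai,
\]
so the triangle inequality yields $|v^i - v^j| \le c(\data)\,r_i\lai$.

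The Steklov-averaged statement follows by the same case split, using \eqref{t_369'1} in place of \eqref{t_369'2} and \eqref{t_369''1} in place of \eqref{t_369''2}; the dependence of the constant on $\Lambda$ comes from those estimates. There is no real obstacle beyond a tidy case analysis — the crucial structural inputs (the inclusion $2U_j \subset K_iU_i$ and the doubling-type bound $|U_j| \le c|U_i|$) have already been established in Proposition~\ref{prop_whitney}, and Lemma~\ref{lem_intrinsic_Poincare} does the analytical work.
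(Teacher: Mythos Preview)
Your proof is correct and follows essentially the same approach as the paper: both split into cases, bound $|v^i-v^j|$ via averages over $K_iU_i$ using the inclusion $2U_j\subset K_iU_i$ from Proposition~\ref{prop_whitney}\,\ref{vii} together with the measure comparability from Proposition~\ref{prop_whitney}\,\ref{Uij}, and then invoke Lemma~\ref{lem_intrinsic_Poincare}. Your case split (on $K_iU_i$ alone) is in fact slightly tidier than the paper's, which also distinguishes whether $K_jU_j\subset Q_{R_2,S_2}(z_0)$ and then notes that case follows by symmetry.
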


\begin{proof}
We only prove the claim without the Steklov averages since the corresponding claim with the Steklov averages is similar. First consider the case $K_iU_i,K_jU_j\subset Q_{R_2,S_2}(z_0)$. Proposition~\ref{prop_whitney}\,\ref{Uij} and  Proposition~\ref{prop_whitney}\,\ref{vii} imply that 
\begin{align*}
    \begin{split}
        |v^i-v^j| &\leq |v^i-(v)_\uik|+|(v)_\uik-v^j|\\
         &= \left|\fiint_{2U_i} v-(v)_{K_iU_i} \,dz\right|+ \left|\fiint_{2U_j} v-(v)_{K_{i}U_i} \,dz\right| \\
        &\leq c\fiint_\uik |v-(v)_{K_iU_i}| \,dz \leq cr_i\la,
    \end{split}
\end{align*}
where the last inequality follows from Lemma~\ref{lem_intrinsic_Poincare}. 

On the other hand, if $K_iU_i\not\subset Q_{R_2,S_2}(z_0)$, we apply Proposition~\ref{prop_whitney}\,\ref{Uij} and  Proposition~\ref{prop_whitney}\,\ref{vii} and \eqref{t_369''2} to have
\[
    |v^i-v^j| \leq \fiint_{2U_i}|v|\,dz+\fiint_{2U_j}|v|\,dz \leq c\fiint_\uik |v| \,dz\le cr_i\la_i.
\]
Since $r_i,r_j$ and $\la_i,\la_j$ are comparable by Proposition~\ref{prop_whitney}\,\ref{v} and  Proposition~\ref{prop_whitney}\,\ref{vi}, the case $K_jU_j\not\subset Q_{R_2,S_2}(z_0)$ is proven similarly and we are done.
\end{proof}
\begin{lemma}\label{lem_410}
For any $z\in U_i$, we have
	\begin{align}  \label{t_382}
		|\na \vhla(z)|\le c(\data,\La)
  \quad\text{and}\quad 
  |\na \vla(z)|\le c(\data)\la_i.
	\end{align}
	Moreover, we have
	\begin{equation}  \label{t_383}
			|\pa_t\vhla(z)|\le c(\data, \Lambda)r_i^{-1},
	\end{equation}
and 
\begin{equation} \label{t_384}
    |\pa_t\vla(z)|\le 
    \begin{cases}
       c(\data)r_i^{-1}\la_i^{p-1},&\text{if}\ U_i=Q_i,\\
       c(\data)r_i^{-1}\lai^{-1}\Lambda,&\text{if}\ U_i=G_i.
    \end{cases}
\end{equation}
\end{lemma}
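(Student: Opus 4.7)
The plan is to reduce the four pointwise bounds to the size estimates on $\vh^j-\vh^i$ and $v^j-v^i$ from Lemma~\ref{lem_48} combined with the derivative bounds on $\om_j$ provided by Proposition~\ref{prop_whitney}\,\ref{xi}--\ref{xii}. The starting point is the local formula
\[
\vhla(w) = \sum_{j\in\mathcal{I}} \vh^j\, \om_j(w), \qquad \vla(w) = \sum_{j\in\mathcal{I}} v^j\, \om_j(w),
\]
which, by \eqref{t_325_1}, holds for every $w$ in a small neighborhood of $z\in U_i$: such $w$ still lies in $E(\La)^c$, and any $j$ with $\om_j(w)\ne 0$ satisfies $w\in 2U_i\cap 2U_j$, hence $j\in\mathcal{I}$.

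Next I would exploit the partition-of-unity identity $\sum_{j\in\NN}\om_j\equiv 1$ on $E(\La)^c$, which yields $\sum_{j\in\mathcal{I}}\na\om_j\equiv 0$ and $\sum_{j\in\mathcal{I}}\pa_t\om_j\equiv 0$ on a neighborhood of $z$. Differentiating the local formula and subtracting the constants $\vh^i$ and $v^i$ times these identities produces the representations
\[
\na \vhla(z) = \sum_{j\in\mathcal{I}} (\vh^j - \vh^i)\, \na\om_j(z), \qquad \pa_t \vhla(z) = \sum_{j\in\mathcal{I}} (\vh^j - \vh^i)\, \pa_t\om_j(z),
\]
together with the analogous formulas for $\vla$. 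From here, $|\na\vla(z)|\le c(\data)\lai$ is immediate from Lemma~\ref{lem_48}, the estimate $\lVert\na\om_j\rVert_\infty\le cr_i^{-1}$ in Proposition~\ref{prop_whitney}\,\ref{xi}, and the uniform cardinality bound $|\mathcal{I}|\le c$ of Proposition~\ref{prop_whitney}\,\ref{ix}; the Steklov version $|\na\vhla(z)|\le c(\data,\La)$ follows identically, replacing the size bound by its Steklov counterpart.

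For the time derivatives I would split according to the two intrinsic cases. In the $p$-intrinsic case $U_i=Q_i$, Proposition~\ref{prop_whitney}\,\ref{xii} gives $\lVert\pa_t\om_j\rVert_\infty\le cr_i^{-2}\lai^{p-2}$; combining with $|v^j-v^i|\le cr_i\lai$ yields $|\pa_t\vla(z)|\le cr_i^{-1}\lai^{p-1}$, and the Steklov variant produces $c(\data,\La)r_i^{-1}\lai^{p-2}$, which is absorbed into $c(\data,\La)r_i^{-1}$ using $\lai\le\La^{1/p}$ together with $p\ge 2$. In the $(p,q)$-intrinsic case $U_i=G_i$, the bound $\lVert\pa_t\om_j\rVert_\infty\le cr_i^{-2}\lai^{-2}\La$ combined with $|v^j-v^i|\le cr_i\lai$ gives $|\pa_t\vla(z)|\le cr_i^{-1}\lai^{-1}\La$ directly, while the Steklov variant reduces to $c(\data,\La)r_i^{-1}$ after invoking $\lai\ge 1$. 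The main conceptual point to justify carefully is that the subtraction trick is legitimate on $U_i$: this requires $U_i$ to sit in the interior of $E(\La)^c$ and the defining sum of $\vhla$ to be locally finite, both of which are guaranteed by Proposition~\ref{prop_whitney}\,\ref{iv} and \ref{ix}.
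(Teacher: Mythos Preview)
Your proposal is correct and follows essentially the same approach as the paper: both differentiate the local representation \eqref{t_325_1}, use $\sum_{j\in\mathcal{I}}\na\om_j=\sum_{j\in\mathcal{I}}\pa_t\om_j=0$ to introduce the differences $v_h^j-v_h^i$ (resp.\ $v^j-v^i$), and then combine Lemma~\ref{lem_48} with Proposition~\ref{prop_whitney}\,\ref{v},\,\ref{ix},\,\ref{xi},\,\ref{xii}. Your treatment of \eqref{t_383} is in fact slightly more explicit than the paper's, which simply absorbs the factors $\lai^{p-2}$ and $\lai^{-2}\La$ into $c(\data,\La)$ without comment.
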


\begin{proof}
We start with the proof of the second inequality in \eqref{t_382}. 
By \eqref{t_325_1} we have 
\[
    \na\vla(z)=\na\biggl(\sum_{j\in \mathcal{I}_i}v^j\om_j(z)\biggr) = \sum_{j\in \mathcal{I}_i}v^j\na\om_j(z).
\]
 Since 
\[
0 = \na\biggl(\sum_{j\in \mathbb{N}}\om_j(z)\biggr)= \na\biggl(\sum_{j\in \mathcal{I}_i}\om_j(z)\biggr)=\sum_{j\in \mathcal{I}_i}\na\om_j
\]
 by Proposition~\ref{prop_whitney}\,\ref{x}, we get
	\[
		\na\vla(z)=\sum_{j\in \mathcal{I}_i}v^j\na\om_j(z)=\sum_{j\in \mathcal{I}_i}v^j\na\om_j(z)-v^i\sum_{j\in \mathcal{I}_i}\na\om_j(z)=\sum_{j\in \mathcal{I}_i}(v^j-v^i)\na\om_j(z).
	\]
	Therefore Lemma~\ref{lem_48} and Proposition~\ref{prop_whitney}\,\ref{xi},\,\ref{v},\,\ref{ix} give
	\[
		|\na\vla(z)|\le \sum_{j\in  \mathcal{I}_i}|v^j-v^i||\na\om_j| \leq c\sum_{j\in  \mathcal{I}_i}r_j^{-1}r_i\lai \leq c\lai.
	\]
Exactly the same argument with $\vhla$ instead of $\vla$ proves the first inequality in \eqref{t_382}. 

	To show \eqref{t_383}, we have by the same argument as above that
\[
    \pa_t\vhla(z)=\sum_{j\in \mathcal{I}_i}\vh^j\pa_t\om_j(z)=\sum_{j\in \mathcal{I}_i}(\vh^j-\vh^i)\pa_t\om_j(z).
\]
Again, Lemma~\ref{lem_48} and Proposition~\ref{prop_whitney}\,\ref{xii},\,\ref{v},\,\ref{ix} give
	\[
	    |\pa_t\vhla(z)| \le \sum_{j\in  \mathcal{I}_i}|\vh^j-\vh^i||\pa_t\om_j| \leq c(\data,\La)\sum_{j\in  \mathcal{I}_i}r_j|\pa_t\om_j| \leq c(\data,\La)r_i^{-1}.
	\]
The same argument with $\vla$ instead of $\vhla$ shows \eqref{t_384} and the proof is completed.
\end{proof}

We are ready to show that $\vhla$ is Lipschitz continuous with respect to the metric 
\[
    d_{\la^p}(z,w)=\max\left\{|x-y|,\sqrt{\la^{p-2}|t-s|}\right\},
\]
where $z,w \in Q_{R_2,S_2}(z_0)$ with $z=(x,t)$ and $w=(y,s)$, and $\la$ is chosen to satisfy $\la^p+\lVert a \rVert_\infty\la^q = \Lambda$.
\begin{lemma} \label{lem_lip}
There exists a constant $c_\La=c(\data,\La)$ such that 
\[
    |\vhla(z)-\vhla(w)|\leq c_\La d_{\la^p}(z,w)
\]
for every $z,w\in\RR^{n+1}$.
\end{lemma}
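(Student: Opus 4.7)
The plan is to combine the pointwise bounds in Lemma~\ref{lem_410} with the strong maximal function bound defining $E(\La)$. I start with a key algebraic observation: the reference scaling factor $\la$ from \eqref{app37}, which satisfies $\la^p+\|a\|_{L^\infty(\mathbb{R}^{n+1})}\la^q=\La$, obeys $\la\ge 1$ (since $\La>1+\|a\|_{L^\infty(\mathbb{R}^{n+1})}$) and $\la\le\la_i$ for every $i$. The second inequality holds because $s\mapsto s^p+as^q$ is strictly increasing in both arguments and the identity $\la^p+\|a\|_{L^\infty(\mathbb{R}^{n+1})}\la^q=\la_i^p+a(z_i)\la_i^q$ together with $a(z_i)\le\|a\|_{L^\infty(\mathbb{R}^{n+1})}$ forces $\la\le\la_i$. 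This comparison is the bridge that converts the intrinsic scaling of Lemma~\ref{lem_410} into the reference metric $d_{\la^p}$.

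For $z,w\in 2U_i$ the bounds $|\na\vhla|\le c_\La$ and $|\pa_t\vhla|\le c_\La r_i^{-1}$ from Lemma~\ref{lem_410} combined with the fundamental theorem of calculus yield
\[
|\vhla(z)-\vhla(w)|\le c_\La|x-y|+c_\La r_i^{-1}|t-s|.
\]
To show the temporal term is bounded by $c_\La\sqrt{\la^{p-2}|t-s|}$, it suffices to verify $|t-s|\le cr_i^2\la^{p-2}$. In the $p$-intrinsic case $|t-s|\le 8\la_i^{2-p}r_i^2$, while in the $(p,q)$-intrinsic case $|t-s|\le 8\la_i^2\La^{-1}r_i^2\le 8\la_i^{2-p}r_i^2$ (using $\la_i^p\le\La$). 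In either situation $p\ge 2$, $\la\le\la_i$ and $\la\ge 1$ give $\la_i^{2-p}\le\la^{2-p}\le\la^{p-2}$, completing this case. For $z\in 2U_i$ and $w\in 2U_j$ with $2U_i\cap 2U_j\ne\emptyset$, I apply this argument twice through an intermediate point $\zeta\in 2U_i\cap 2U_j$, using the comparability of $r_i\sim r_j$ and $\la_i\sim\la_j$ from Proposition~\ref{prop_whitney}\,\ref{v},\,\ref{vi}.

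The remaining configurations are reduced to the case where at least one of the two points lies in $E(\La)$: by Proposition~\ref{prop_whitney}\,\ref{iv}, every $U_i$ has a point of $E(\La)$ within $d_i$-distance $\lesssim r_i$, so one can always replace $z\in E(\La)^c$ by a nearby $z^\ast\in E(\La)$ at the cost of a term already controlled by the preceding step. At two Lebesgue points $z^\ast,w^\ast\in E(\La)$, where $\vhla=\vh$, I run a Poincar\'e-type telescoping argument based on Lemma~\ref{lem32} and Lemma~\ref{lem_subintrinsic}, using that averages of $f^{p-1}+af^{q-1}$ on cylinders intersecting $E(\La)$ are bounded by $\La^{(p-1)/p}$, to obtain $|\vh(z^\ast)-\vh(w^\ast)|\le c_\La d_{\la^p}(z^\ast,w^\ast)$. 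Pointwise everywhere follows by extending the estimate from Lebesgue points of $\vh$ by continuity of $\vhla$ on $E(\La)^c$. The main obstacle will be the mixed configuration in which $\vhla(z)=\sum_{j\in\mathcal{I}}\vh^j\om_j(z)$ on $E(\La)^c$ must be compared with $\vh(w^\ast)$ at a Lebesgue point $w^\ast\in E(\La)$; this comparison requires careful use of Lemma~\ref{lem_intrinsic_Poincare} together with the coefficient equivalence $a(z)\sim a(z_i)$ on neighbouring $(p,q)$-intrinsic cylinders from Proposition~\ref{prop_whitney}\,\ref{viii}, which is needed to absorb the factor $a$ when passing between the averages $\vh^j$.
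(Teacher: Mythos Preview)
Your strategy is different from the paper's: the paper proves \eqref{479}, the Campanato (mean-oscillation) characterisation of Lipschitz continuity, and splits according to whether the test cylinder $Q_l^\la(w)$ is deep inside $E(\La)^c$, entirely inside $E(\La)$, or straddling the two. Your direct pointwise argument is a legitimate alternative, and your first two paragraphs (the reference scale $\la\le\la_i$ and the mean-value estimate inside a single $2U_i$ or across adjacent cylinders) are correct.

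The gap is in your third paragraph. When $z\in U_i$ and $w\in U_j$ lie in \emph{non-adjacent} Whitney cylinders ($2U_i\cap 2U_j=\emptyset$), you replace $z$ by $z^\ast\in E(\La)\cap 5U_i$ via Proposition~\ref{prop_whitney}\,\ref{iv}. The cost of this replacement is of order $d_{\la^p}(z,z^\ast)\le d_i(z,z^\ast)\lesssim r_i$, and you then need $r_i\le c\,d_{\la^p}(z,w)$ to conclude. But this can fail: since $d_{\la^p}\le d_i$ (cf.~\eqref{495} in the paper) the inequality only goes one way, and when $\la_i\gg\la$ the cylinder $U_i$ is thin in the reference metric, so $z$ and $w$ can be $d_{\la^p}$-close while $r_i$ is large. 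In that regime your chosen $z^\ast$ is much farther from $z$ (in $d_{\la^p}$) than $w$ is, and the triangle inequality does not close.

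The remedy is to choose the auxiliary point at the scale $l=d_{\la^p}(z,w)$ rather than at the Whitney scale $r_i$: look at $2Q_l^\la(z)$. If it misses $E(\La)$, every Whitney radius seen along the segment from $z$ to $w$ satisfies $r_i\ge l/6$ (exactly the computation in the paper's Case~1), so \eqref{t_383} gives $|\pa_t\vhla|\le c_\La l^{-1}$ and the mean-value theorem finishes. If $2Q_l^\la(z)$ meets $E(\La)$, you get $z^\ast$ with $d_{\la^p}(z,z^\ast)\le 2l$ and your telescoping/Poincar\'e argument in $E(\La)$ can proceed. The paper's Campanato formulation packages this scale-by-scale dichotomy automatically, which is why it avoids the pitfall; once you adjust the choice of $z^\ast$ as above, your direct approach can be completed, but it ends up reproducing the same case split.
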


\begin{proof}
Observe that a cylinder $\Qlw$, as defined in \eqref{def_Q_cylinder}, is a ball of radius $l$ associated with the metric $d_{\la^p}$. Since $\la_i\ge\la$ and $p\geq 2$, it is clear that 
\begin{align} \label{495}
    	d_{\la^{p}}(\cdot,\cdot)\le d_{i}(\cdot,\cdot),
\end{align}
for all $i\in \mathbb{N}$. Therefore, $U_i \subset Q^\la_{r_i}(z_i)$ for any $i \in \mathbb{N}$. By the Campanato characterization of Lipschitz continuity, it is enough to show that there exists a constant $c_\La=c(\data, \Lambda)$, such that 
	\begin{align}\label{479}
		\fiint_{Q_{l}^\la(w)}\frac{|\vhla-(\vhla)_{Q_{l}^\la(w)}|}{l}\,dz\le c_\La
	\end{align}
for all $\Qlw\subset \mathbb{R}^{n+1}$. We fix $\Qlw$ and prove the above estimate, first in the case when $2\Qlw$ is inside the bad set, then when it is touching  the good set.

\textit{Case 1: $2Q_{l}^\la(w)\subset E(\La)^c$.} Let $z\in \Qlw$ and let $i\in \mathbb{N}$ be such that $z\in U_i$. Note that such $i$ exists by  Proposition~\ref{prop_whitney}\ref{i}. Triangle inequality and \eqref{495} give
\[
    l \leq d_{\la^p}(z, E(\Lambda)) \leq d_{\la^p}(z,z_i)+d_{\la^p}(z_i,E(\La))\leq d_{i}(z,z_i)+d_{i}(z_i,E(\La)),
\]
where $z_i$ is the centre of $U_i$. Therefore, Proposition~\ref{prop_whitney}\,\ref{iv} and the fact that $U_i$ is a ball of radius $r_i$ with respect to the metric $d_{i}(\cdot,\cdot)$ implies that
\[
    l\leq d_{i}(z,z_i)+d_{i}(z_i,E(\La)) \leq r_i+ 5r_i=6r_i.
\]
Combining the above inequality with Lemma~\ref{lem_410}, we obtain the uniform estimate
\begin{equation}\label{496}
  	|\pa_t\vhla(z)|\le c(\data,\La)r_i^{-1} \leq c(\data,\La)l^{-1}
\end{equation}
for all $z\in \Qlw$.

To show \eqref{479}, let $z_1, z_2 \in \Qlw$ with $z_1=(x_1,t_1)$ and $z_2=(x_2,t_2)$. Since $\vhla \in C^\infty(E(\La)^c,\RR^N)$ and $\Qlw \subset E(\La)^c$, by the mean value theorem
we obtain
\begin{align*}
    \begin{split}
        |\vhla(z_1)-\vhla(z_2)|&\leq |\vhla(x_1,t_1)-\vhla(x_2,t_1)|+|\vhla(x_2,t_1)-\vhla(x_2,t_2)|\\
			&\le cl\sup_{z\in \Qlw}|\na \vhla(z)|+c\la^{2-p}l^2\sup_{z\in Q_{l}^\la(w)}|\pa_t\vhla(z)|\leq c(\data,\La)l,
    \end{split}
\end{align*}
where the last inequality follows from \eqref{t_382} and \eqref{496}.
The estimate above gives
\[
		\fiint_{Q_{l}^\la(w)}\frac{|\vhla-(\vhla)_{Q_{l}^\la(w)}|}{l}\,dz 
		\leq \fiint_{Q_{l}^\la(w)} \fiint_{Q_{l}^\la(w)}\frac{\left|\vhla(z_1)-\vhla(z_2)\right|}{l} \,dz_2 \,dz_1\leq c(\data, \La).
\]
This proves \eqref{479} in the case $2Q_{l}^\la(w)\subset E(\La)^c$.

\textit{Case 2: $2Q_{l}^\la(w) \cap E(\La) \neq \emptyset$ and $Q_{l}^\la(w) \cap E(\La)^c = \emptyset$.} In this case $\vhla = \vh$ and therefore
\begin{align}\label{3_131}
        \fiint_{Q_{l}^\la(w)}\frac{|\vhla-(\vhla)_{Q_{l}^\la(w)}|}{l}\,dz=\fiint_{Q_{l}^\la(w)}\frac{|\vh-(\vh)_{Q_{l}^\la(w)}|}{l}\,dz.
\end{align}
We denote $w=(y,s)$. If $B_l(y)\subset B_{R_2}(x_0)$, then $l\le R_2$ holds and we apply Lemma~\ref{lem32} and Lemma~\ref{lem_subintrinsic} to get
\[
    \fiint_{Q_{l}^\la(w)}\frac{|\vh-(\vh)_{Q_{l}^\la(w)}|}{l}\,dz\le c(\data,\La)(\la^{2-p}+\la^{2-p}l+1)\le c(\data,\La).
\]
On the other hand, if $B_l(y)\not\subset B_{R_2}(x_0)$, we use Lemma~\ref{lem22} with $\sig=1$ and $s=d$ to get
\[
    \fiint_{Q_{l}^\la(w)}\frac{|\vh-(\vh)_{Q_{l}^\la(w)}|}{l}\,dz\le 2\fiint_{Q_{l}^\la(w)}\frac{|\vh|}{l}\,dz\le c\fiint_{4Q_{l}^\la(w)}|\na \vh|\,dz.
\]
Recalling that in this case $2Q_{l}^\la(w) \cap E(\La) \neq \emptyset$,
we conclude with Lemma~\ref{lem_subintrinsic} that
\[
    \fiint_{4Q_{l}^\la(w)}|\na \vh|\,dz\le c\fiint_{[4Q_{l}^\la(w)]_h}f\,dz\le c(\data,\La),
\]
which finishes the proof of \eqref{479} in the case $2Q_{l}^\la(w) \cap E(\La) \neq \emptyset$ and $Q_{l}^\la(w) \cap E(\La)^c = \emptyset$.

\textit{Case 3: $2Q_{l}^\la(w) \cap E(\La) \neq \emptyset$ and $Q_{l}^\la(w) \cap E(\La)^c \neq \emptyset$.} To estimate in the region where $\Qlw$ touches $E(\La)^c$, we define the index set 
\[
P = \{i\in \mathbb{N} : \Qlw \cap 2U_i\neq \emptyset\}. 
\]
We claim that the radii $r_i$ are bounded uniformly by $l$ when $i\in P$.  Let $i\in P$, $w_1 \in \Qlw\cap 2U_i$ and $w_2\in 2\Qlw\cap E(\Lambda)$ with $w_1=(y_1,s_1)$ and $w_2=(y_2,s_2)$. By Proposition~\ref{prop_whitney}\,\ref{iii}  and $w_1 \in 2U_i$, we have  
\[
        3r_i\leq d_i(U_i, E(\Lambda))\leq d_i(z_i,w_2)\leq d_i(z_i,w_1)+d_i(w_1,w_2)\leq 2r_i+d_i(w_1,w_2),
\]
and therefore $r_i\leq d_i(w_1,w_2)$. Furthermore, since $\la\leq\lai$ we have
\begin{align*}
\begin{split}
    d_i(w_1,w_2)
    &\leq \max\left\{|y_1-y_2|,\sqrt{\La\lai^{-2}|s_1-s_2|}\right\}\\
    &\leq \Lambda^\frac{1}{2}\max\left\{|y_1-y_2|,\sqrt{\la^{p-2}|s_1-s_2|}\right\}\\
    &=\Lambda^\frac{1}{2}d_{\la^p}(w_1,w_2) \leq 4\Lambda^\frac{1}{2}l,
    \end{split}
\end{align*}
where the last inequality follows from the fact that $w_1,w_2 \in 2\Qlw$. We conclude that $r_i \leq c(\Lambda)l$.

To prove \eqref{479} we add and subtract twice to obtain 
\begin{align}
    \begin{split} \label{3_85}
        &\fiint_{\Qlw} \frac{|\vh^\Lambda-(\vh^\Lambda)_{\Qlw}|}{l}\,dz
        \leq 2\fiint_{\Qlw} \frac{|\vh^\Lambda-(\vh)_{\Qlw}|}{l}\,dz \\
        &\qquad\leq 2\fiint_{\Qlw} \frac{|\vh^\Lambda-\vh|}{l}\,dz+2\fiint_{\Qlw} \frac{|\vh-(\vh)_{\Qlw}|}{l}\,dz.
    \end{split}
\end{align}
The second term on the right-hand side is the same as in \eqref{3_131} and can be  estimated in the same way. To estimate the first term, observe from \eqref{t_421} and the fact that $w_i$ is supported in $2U_i$, that
\begin{align*}
    \begin{split}
        \fiint_{\Qlw} \frac{|\vh^\Lambda-\vh|}{l}\,dz 
        &=\fiint_{\Qlw} \frac{\left|\sum_{i\in P}(\vh-\vh^i)\om_i\right|}{l}\,dz\\
        &\leq \fiint_{\Qlw} \sum_{i\in P} \frac{\left|\vh-\vh^i\right|\om_i}{l}\,dz\\
        &= \frac{1}{|\Qlw|}\sum_{i\in P}\iint_{\Qlw \cap 2U_i} \frac{\left|\vh-\vh^i\right|\om_i}{l}\,dz.
    \end{split}
\end{align*}
Since $\om_i\leq 1$ and $r_i \leq c(\Lambda)l$, we have
\[
        \frac{1}{|\Qlw|}\sum_{i\in P}\iint_{\Qlw \cap 2U_i} \frac{\left|\vh-\vh^i\right|\om_i}{l}\,dz 
        \leq \frac{c(\Lambda)}{|\Qlw|}\sum_{i\in P}\iint_{2U_i} \frac{\left|\vh-\vh^i\right|}{r_i}\,dz,
\]
and combining the previous inequalities with \eqref{rmk_46} gives
\[
        \fiint_{\Qlw} \frac{|\vh^\Lambda-\vh|}{l}\,dz \leq \frac{c(\data,\Lambda)}{|\Qlw|}\sum_{i\in P}|U_i|.
\]
Since  $r_i \leq c(\Lambda)l$, we have $U_i \subset c(\data,\La)\Qlw$ for every $i\in P$. From this and Proposition~\ref{prop_whitney}\,\ref{ii}, we conclude that
\[
    \fiint_{\Qlw} \frac{|\vh^\Lambda-\vh|}{l}\,dz \leq \frac{c(\data,\Lambda)}{|\Qlw|}\sum_{i\in P} \left|\tfrac{1}{6K}U_i\right|\leq c(\data,\Lambda).
\]
Combining this estimate with \eqref{3_85}, we have \eqref{479} in the final case and the proof is completed.
\end{proof}

In the following proposition we collect the properties of Lipschitz truncation that will be used in the proof of Theorem~\ref{thm_cac}.
\begin{proposition} \label{prop_liptr1}
    Let $E(\La)$ be as in \eqref{def_E}. There exist functions $\{v_h^\La \}_{h>0}$ and a function $v^\La$ satisfying the following properties:
    \begin{enumerate}[label=(\roman*),series=theoremconditions]
    \item\label{p2-1} $v_h^\La\in W_0^{1,2}(\ell_{S_2-h_0}(t_0);L^2(B_{R_2}(x_0),\RR^N))\cap L^\infty(\ell_{S_2-h_0}(t_0);W^{1,\infty}_{0}(B_{R_2}(x_0),\RR^N))$.
    \item\label{p2-2} $v^\La\in L^\infty(\ell_{S_2}(t_0);W^{1,\infty}_{0}(B_{R_2}(x_0),\RR^N))$.
    \item\label{p2-3} $v_h^\La= v_h,$ $v^\La= v$, $\na v_h^\La= \na v_h,$ $\na v^\La=\na v$ a.e. in $E(\La)$.
    \item\label{p3-1} $v_h^\La \to v^\La$ in $L^\infty(Q_{R_2,S_2}(z_0),\RR^N)$ as $h\to0^+$, taking a subsequence if necessary.
    \item\label{p3-2} $\na v_h^\La\to \na v^\La$ and $\pa_t v_h^\La\to \pa_t v^\La$ a.e. in $E(\La)^c$ as $h\to0^+$.
\end{enumerate}
Moreover, there exist constants $c=c(\data)$ and $c_\La=c_\La(\data,\La)$ such that
\begin{enumerate}[resume*=theoremconditions]
    \item\label{p5} $\begin{aligned}[t]
    \iint_{E(\La)^c}|v-v^\La||\pa_tv^\La|\,dz\le c\La|E(\La)^c|,
    \end{aligned}$
    \item\label{p4} $\begin{aligned}[t]
    \iint_{E(\La)^c}|v_h-v_h^\La||\pa_tv_h^\La|\,dz\le c_\La|E(\La)^c|,
    \end{aligned}$
    \item\label{p6} $H(z,|v^\La(z)|)+H(z,|\na v^\La(z)|)\le c\La$ for a.e. $z\in \mathbb{R}^{n+1}$,
    \item\label{p7} $H(z,|v_h^\La(z)|)+H(z,|\na v_h^\La(z)|)\le c_\La$ for a.e. $z\in \RR^{n+1}.$
\end{enumerate}
\end{proposition}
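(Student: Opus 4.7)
The plan is to derive the ten properties as direct consequences of the construction \eqref{t_421}, \eqref{t_421_2} together with the pointwise estimates from Lemmas \ref{lem_47}, \ref{lem_410} and the Lipschitz estimate from Lemma \ref{lem_lip}. First I would handle \ref{p2-1}, \ref{p2-2}, and \ref{p2-3}. Since $v_h^i = v^i = 0$ whenever $2U_i \not\subset Q_{R_2,S_2}(z_0)$, the truncations vanish outside this cylinder and in particular on the spatial boundary $\partial B_{R_2}(x_0)$. The $W^{1,\infty}$ regularity in space follows from Lemma~\ref{lem_lip} (or from the spatial gradient bound \eqref{t_382} combined with the coincidence on $E(\Lambda)$). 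Time regularity of $v_h^\Lambda$ comes from the fact that $v_h$ and each $v_h^i$ lie in $W^{1,2}$ in time and the sum in \eqref{t_421} is locally finite by Proposition~\ref{prop_whitney}\,\ref{ix}. Property \ref{p2-3} is immediate from \eqref{t_325_1}, since $\omega_j \equiv 0$ on $E(\Lambda)$ by Proposition~\ref{prop_whitney}\,\ref{i},\,\ref{x}; the gradient identity then holds at Lebesgue points of $E(\Lambda)$ by standard arguments.

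Next I would address the convergence statements \ref{p3-1} and \ref{p3-2}. On $E(\Lambda)$ we have $v_h^\Lambda = v_h \to v = v^\Lambda$ by standard properties of Steklov averages; on $E(\Lambda)^c$ the representation \eqref{t_325_1} expresses $v_h^\Lambda$ as a locally finite sum $\sum_{j\in\mathcal{I}} v_h^j \omega_j$, and $v_h^j \to v^j$ as $h\to0^+$ since each $v^j$ is an integral average of $v$ over a fixed cylinder. Combined with the uniform Lipschitz bound of Lemma~\ref{lem_lip}, the Arzelà–Ascoli theorem yields uniform convergence on $Q_{R_2,S_2}(z_0)$ along a subsequence, giving \ref{p3-1}. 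For \ref{p3-2}, I would differentiate the representation term by term (as in the proof of Lemma~\ref{lem_410}, using $\sum_j \nabla\omega_j = 0$ and $\sum_j \partial_t\omega_j = 0$) and again appeal to $v_h^j \to v^j$ with the partition of unity terms being independent of $h$.

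For \ref{p5} and \ref{p4} I would start from the identity $v - v^\Lambda = \sum_{j\in\mathcal{I}} (v - v^j)\omega_j$ valid on each $U_i$. Integrating over $U_i$ and using Proposition~\ref{prop_whitney}\,\ref{vii},\,\ref{Uij} together with \eqref{rmk_46} gives
\begin{equation*}
\iint_{U_i} |v - v^\Lambda|\,dz \leq c\sum_{j\in\mathcal{I}} \iint_{2U_j}|v - v^j|\,dz \leq c\, r_i \lambda_i |U_i|.
\end{equation*}
Combining with the pointwise bound \eqref{t_384} on $\partial_t v^\Lambda$ yields in both intrinsic cases
\begin{equation*}
\iint_{U_i} |v - v^\Lambda||\partial_t v^\Lambda|\,dz \leq c(\lambda_i^p + a(z_i)\lambda_i^q)|U_i| \leq c\Lambda |U_i|,
\end{equation*}
and summing over $i$ using pairwise disjointness of $\{\tfrac{1}{6K}U_i\}$ together with $|U_i| \sim |\tfrac{1}{6K}U_i|$ gives $\sum_i |U_i| \leq c |E(\Lambda)^c|$ and hence \ref{p5}. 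The Steklov version \ref{p4} is identical up to replacing \eqref{t_384} by \eqref{t_383} and Lemma~\ref{lem_intrinsic_Poincare}'s $v$-estimate by its $v_h$-counterpart, which introduces only the $\Lambda$-dependent constant $c_\Lambda$.

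Finally, for \ref{p6} and \ref{p7} I would split according to whether $z \in E(\Lambda)$ or $z \in U_i$ for some $i$. On $E(\Lambda)$ the truncations coincide with $v$, $v_h$, and the definition of $E(\Lambda)$ combined with Lebesgue differentiation yields $H(z,|v|) + H(z,|\nabla v|) \leq c\Lambda$. On each $U_i$, Lemma~\ref{lem_47} and Lemma~\ref{lem_410} give $|v^\Lambda(z)| + |\nabla v^\Lambda(z)| \leq c\lambda_i$, so it remains to show $\lambda_i^p + a(z)\lambda_i^q \leq c\Lambda$ pointwise on $U_i$. In the $(p,q)$-intrinsic case $U_i = G_i$, Proposition~\ref{prop_whitney}\,\ref{viii} yields $a(z) \leq 2a(z_i)$, so $a(z)\lambda_i^q \leq 2\Lambda$. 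The slightly delicate point (and the main, though still minor, obstacle) is the $p$-intrinsic case: here I would invoke Proposition~\ref{prop_whitney}\,\ref{UUi} to obtain $r_i^\alpha \lambda_i^q \leq c\lambda_i^p$ and then use the Hölder regularity of $a$ to write
\begin{equation*}
a(z)\lambda_i^q \leq a(z_i)\lambda_i^q + [a]_\alpha (2r_i)^\alpha \lambda_i^q \leq K^2\lambda_i^p + c\lambda_i^p \leq c\Lambda,
\end{equation*}
finishing \ref{p6}. Property \ref{p7} is proved analogously, with the pointwise bound on $v_h^\Lambda$ from Lemma~\ref{lem_47} now depending on $\Lambda$, hence the constant $c_\Lambda$.
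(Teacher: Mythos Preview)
Your proposal is correct and matches the paper's approach almost line for line: the same appeals to the construction, to Lemmas~\ref{lem_47}, \ref{lem_410}, \ref{lem_lip}, the Arzel\`a--Ascoli argument for \ref{p3-1}, term-by-term differentiation for \ref{p3-2}, and the summation over the Whitney pieces with \eqref{rmk_46} and \eqref{t_384} for \ref{p5}--\ref{p4}. The treatment of \ref{p6} via the case split $E(\La)$ versus $U_i$ and the use of Proposition~\ref{prop_whitney}\,\ref{UUi},\,\ref{viii} is exactly what the paper does.

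There is one small gap in your handling of \ref{p7}. Saying it is ``proved analogously'' to \ref{p6} does not quite work on the good set: there $v_h^\La=v_h$, and the Lebesgue differentiation argument that gave $H(z,|v(z)|)+H(z,|\na v(z)|)\le c\La$ pointwise on $E(\La)$ fails for the Steklov average, because $[f]_h(z)$ is an average over a time segment rather than a space-time cylinder and is not controlled by the strong maximal function at $z$. The paper circumvents this by abandoning the case split for \ref{p7} and using Lemma~\ref{lem_lip} globally: the Lipschitz bound yields $|\na v_h^\La|\le c_\La$ a.e., and since $v_h^\La$ vanishes outside $Q_{R_2,S_2}(z_0)$ one also gets $|v_h^\La|\le c_\La$; then the crude estimate $H(z,s)\le (1+\|a\|_\infty)(1+s)^q$ finishes. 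You have already invoked Lemma~\ref{lem_lip} for \ref{p2-1}--\ref{p2-2}, so this is a one-line fix.
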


\begin{proof}
    We stated \ref{p2-1} already at the beginning of this section, while \ref{p2-2} is deduced from Lemma~\ref{lem_lip}. From the construction $v_h^\La= v_h$ and $v^\La= v$ in $E(\La)$. Therefore, as $v_h^\La,v_h,v^\La,v$ belong to $L^p(\ell_{S_2} (t_0);W_0^{1,p}(B_{R_2}(z_0),\RR^N)$, we also have $\na v_h^\La= \na v_h$ and $\na  v^\La= \na v$ a.e. in $E(\La)$, which proves \ref{p2-3}.
    
    Next we prove \ref{p3-1}. Note that $\{v_h^\La\}_{h>0}$ is equicontinuous and uniformly bounded by Lemma~\ref{lem_lip}. Therefore, it follows from the Arzela--Ascoli theorem that, by passing to a subsequence if necessary, there exists $w$ such that $v_h^\La$ converges to $w$ in $L^\infty(Q_{R_2,S_2}(z_0),\RR^N)$. On the other hand, $v_h^\La$ converges to $v^\La$ a.e. as $h\to0^+$, which means $w=v^\La$ a.e.  
    This shows that $v_h^\La \to v^\La$ in $L^\infty(Q_{R_2,S_2}(z_0),\RR^N)$ as $h\to0^+$.
    
    If $z\in E(\La)^c$, then there exists $i\in\mathbb{N}$ such that $z\in U_i$. It follows from \eqref{t_325_1} that
    \[
        \na v_h^\La(z)=\sum_{j\in \mathcal{I}_i}v_h^j\na \om_j(z),\quad \pa_t v_h^\La(z)=\sum_{j\in \mathcal{I}_i}v_h^j\pa_t \om_j(z).
    \]
    By the properties of the Steklov average we have $v_h^j\to v^j$ as $h\to0^+$, which implies the a.e. convergence of $\na v_h^\La$ and $ \pa_t v^\La$ to $\na v^\La$ and $\pa_t v^\La$. This shows that
     $\na v_h^\La\to \na v^\La$ and $\pa_t v_h^\La\to \pa_t v^\La$ a.e. in $E(\La)^c$ as $h\to0^+$.
     This completes the proof of \ref{p3-2}.
    
     Then we discuss \ref{p5} and \ref{p4}. It follows from \eqref{t_421_2} that
\begin{align*}
\begin{split}
    \iint_{E(\La)^c}|v-\vla| |\pa_tv^\La|\,dz 
        &\leq \iint_{E(\La)^c}\sum_{i\in \mathbb{N}}|v-v^i||\om_i||\pa_tv^\La|\,dz \\
     &\leq \sum_{i\in \mathbb{N}} \iint_{2U_i} |v-v^i||\pa_tv^\La|\,dz\\
      & \leq \sum_{i\in \mathbb{N}} \|\pa_tv^\La\|_{L^\infty(2U_i)} \iint_{2U_i} |v-v^i|\,dz.
\end{split}
\end{align*}
By \eqref{rmk_46} and \eqref{t_384} we have
\begin{align*}
    \|\pa_tv^\La\|_{L^\infty(2U_i)} \iint_{2U_i} |v-v^i|\,dz&
    \le cr_i\la_i|U_i|\|\pa_tv^\La\|_{L^\infty(2U_i)}\\
    &\le c\La|U_i|=c\La|\tfrac{1}{6K}U_i|,
\end{align*}
and using the disjointedness of $\{\tfrac{1}{6K}U_i\}_{i\in\mathbb{N}}$ in Proposition~\ref{prop_whitney}\,\ref{ii} and $U_i\subset E(\La)^c$, \ref{p5} follows. Moreover, replacing \eqref{t_369'2}, \eqref{t_369''2} and \eqref{t_384} by \eqref{t_369'1}, \eqref{t_369''1} and \eqref{t_383} respectively, we conclude that \ref{p4} holds.

Then we consider \ref{p6}. If $z\in E(\La)$, it is easy to see from the definition of $E(\La)$, that
\begin{align*}
\begin{split}
    &|v^\La(z)|^p+a(z)|v^\La(z)|^q+|\na v^\La(z)|^p+a(z)|\na v^\La(z)|^q\\
    &\qquad=|v(z)|^p+a(z)|v(z)|^q+|\na v(z)|^p+a(z)|\na v(z)|^q\\
    &\qquad\le c(f^p(z)+a(z)f^q(z))\le c\La.
\end{split}
\end{align*}
On the other hand, if $z\in E(\La)^c$, then $z\in U_i$ for some $i\in \mathbb{N}$ and Lemma~\ref{lem_47} and Lemma~\ref{lem_410} give
\[
    |v^\La(z)|^p+a(z)|v^\La(z)|^q+|\na v^\La(z)|^p+a(z)|\na v^\La(z)|^q\le c(\la_i^p+a(z)\la_i^q).
\]
We estimate the right-hand side in two cases. If $U_i=Q_i$, we use \eqref{15}, Proposition~\ref{prop_whitney}\,\ref{UUi} and the fact that $\La=\la_i^p+a(z_i)\la_i^q$, to get
\[
\la_i^p+a(z)\la_i^q \leq \la_i^p+a(z_i)\la_i^q+[a]_\al r_i^\alpha\la_i^q\le c\La.
\]
If $U_i=G_i$, we have by  Proposition~\ref{prop_whitney}\,\ref{vii}, that
\[
    \la_i^p+a(z)\la_i^q\le c(\la_i^p+2a(z_i)\la_i^q)\le c\La.
\]
This completes the proof of \ref{p6}.

Finally, we prove \ref{p7}. From Lemma~\ref{lem_lip} follows that $|\na v_h^\La(z)|\le c_\La$ for a.e. $z\in \RR^{n+1}$, and therefore 
\[
    |\na v_h^\La(z)|^p+a(z)|\na v_h^\La(z)|^q\le (1+\|a\|_{\infty})(1+|\na v_h^\La(z)|)^q\le c_\La.
\]
Since $v^\La_h=0$ in $Q_{R_2,S_2}(z_0)^c$, Lemma~\ref{lem_lip} implies that $|v_h^\La(z)|\le c_\La$ for a.e. $z\in \RR^{n+1}$ and
\[
    |v_h^\La(z)|^p+a(z)| v_h^\La(z)|^q\le (1+\|a\|_{\infty})(1+| v_h^\La(z)|)^q\le c_\La.
\]
This finishes the proof.
\end{proof}

\section{Energy estimate}

In this section we use the Lipschitz truncation to prove the energy estimate in Theorem~\ref{thm_cac}. The main idea of the proof is to use the estimates in Proposition~\ref{prop_liptr1} and conclude the desired convergence from \eqref{convergence}. A similar argument will be used to prove Theorem~\ref{main2} in Section~\ref{subsec_uni}. 

\subsection{Proof of Theorem~\ref{thm_cac}}

For $\tau\in \ell_{S_2-h}(t_0)$ and sufficiently small $\delta>0$, let
\begin{align}\label{3_zetade}
	\zeta_{\delta}(t)=
	\begin{cases}
		1,&t\in (-\infty,\tau-\delta),\\
		1-\frac{t-\tau+\delta}{\delta},&t\in[\tau-\delta,\tau],\\
		0,&t\in(\tau,t_0+S_2-h).
	\end{cases}
\end{align}
Proposition~\ref{prop_liptr1}\,\ref{p2-1} implies that $\vhla\eta^q\zeta\zeta_{\delta}(\cdot,t) \in W^{1,\infty}_0(B_{R_2}(x_0),\RR^N)$ for every $t \in \ell_{S_2-h}(t_0)$. Therefore, $\vhla\eta^q\zeta\zeta_{\delta}(\cdot,t)$ can be used as a test function in the Steklov averaged weak formulation of \eqref{11}, and we obtain
\begin{align*}
	\begin{split}
		\mathrm{I}+\mathrm{II}
		&=\iint_{Q_{R_2,S_2}(z_0)}\pa_t[u-\uu]_h\cdot\vhla\eta^q\zeta\zeta_\de\,dz\\
		&\qquad+\iint_{Q_{R_2,S_2}(z_0)}[\mA(\cdot,\na u)]_h\cdot\na (\vhla\eta^q\zeta\zeta_{\delta})\,dz\\
		&=\iint_{Q_{R_2,S_2}(z_0)}[|F|^{p-2}F+a|F|^{q-2}F]_h\cdot \na(\vhla\eta^q\zeta\zeta_{\delta})\,dz=\mathrm{III}.
	\end{split}
\end{align*}
We show the desired convergence of each term by splitting the domain of integration into the good set $E(\La)$ and the bad set $E(\La)^c$. First letting $h\to 0^+$ and then $\La\to\infty$, the integral on the good set converges to the corresponding integral without a truncation, while the integral over the bad set converges to zero.

We start with term $\mathrm{I}$. Integration by parts and the product rule give
\begin{align*}
\begin{split}
    \mathrm{I}&=\iint_{Q_{R_2,S_2}(z_0)}(-\vh\cdot\vhla\eta^{q-1}\pa_t\zeta_\delta-\vh\cdot\pa_t\vhla \eta^{q-1}\zeta_{\delta})\,dz\\
    &\qquad-\iint_{Q_{R_2,S_2}(z_0)}[u-\uu]_h\cdot\vhla\eta^q\zeta_{\delta}\pa_t\zeta\,dz=\mathrm{I}_1+\mathrm{I}_2.
\end{split}
\end{align*}
First we consider $\mathrm{I}_1$ and write it as
\begin{align*}
	\begin{split}
		\mathrm{I}_1&=
  \iint_{Q_{R_2,S_2}(z_0)}-|\vh|^2\eta^{q-1}\pa_t\zeta_\delta \,dz + \iint_{Q_{R_2,S_2}(z_0)}\vh\cdot(v_h-v_h^\La)\eta^{q-1}\pa_t\zeta_\delta \,dz\\
  &\qquad-\iint_{Q_{R_2,S_2}(z_0)}(\vh-\vhla)\cdot\pa_t\vhla\eta^{q-1}\zeta_{\delta}\,dz-\iint_{Q_{R_2,S_2}(z_0)}\vhla\cdot\pa_t\vhla\eta^{q-1}\zeta_{\delta}\,dz.
\end{split}
\end{align*}
Integration by parts in the last term above gives
\begin{align*}
	\begin{split}
		\mathrm{I}_1
		&=\iint_{Q_{R_2,S_2}(z_0)}-\Bigl(|\vh|^2-\frac{1}{2}|\vhla|^2\Bigr)\eta^{q-1}\pa_t\zeta_{\delta}\,dz\\
		&\qquad+\iint_{Q_{R_2,S_2}(z_0)}\vh\cdot (\vh-\vhla)\eta^{q-1}\pa_t\zeta_{\delta}\,dz-\iint_{E(\La)^c}(\vh-\vhla)\cdot\pa_t\vhla\eta^q\zeta_{\delta}\,dz,
	\end{split}
\end{align*}
where we also observed that the domain in the third integral can be written as $E(\La)^c$ since $v_h= v_h^\La$ in $E(\La)$ and $v_h= v_h^\La= 0$ in $Q_{R_2,S_2}(z_0)^c$.

We claim that the above terms converge to the corresponding integrals without Steklov averages as $h\to 0^+$. Indeed, in the first integral the term $v_h$ converges to $v$ in $L^2(Q_{R_2,S_2}(z_0),\RR^N)$ and $v_h^\La$ converges to $v^\La$ in $L^\infty(Q_{R_2,S_2}(z_0),\RR^N)$ by the properties of Steklov averages and Proposition~\ref{prop_liptr1}~ \ref{p3-1}. The second integral converges similarly. The third integral is bounded uniformly in $L^1(E(\La)^c)$ for all $h>0$ by Proposition~\ref{prop_liptr1}\,\ref{p4} and since $v_h$, $v_h^\La$ and $\pa_tv_h^\La$ converge a.e. to $v$, $v^\La$ and $\pa_tv^\La$ respectively, we get the desired convergence. Therefore, we obtain
\begin{align*}
	\begin{split}
		\lim_{h \to 0}\mathrm{I}_1
		&=\iint_{Q_{R_2,S_2}(z_0)}-\Bigl(|v|^2-\frac{1}{2}|\vla|^2\Bigr)\eta^{q-1}\pa_t\zeta_{\delta}\,dz+\iint_{Q_{R_2,S_2}(z_0)}v\cdot (v-\vla)\eta^{q-1}\pa_t\zeta_{\delta}\,dz\\
         &\qquad-\iint_{E(\La)^c}(v-\vla)\cdot\pa_t\vla\eta^q\zeta_{\delta}\,dz=\mathrm{I}_{11}+\mathrm{I}_{12}+\mathrm{I}_{13}.
	\end{split}
\end{align*}
Again, we estimate each term separately. Since the Lipschitz truncation is done only in the bad set, we have 
\[
		\mathrm{I}_{11}=
		 \iint_{E(\La)}-\frac{1}{2}|v|^2\eta^{q-1}\pa_t\zeta_{\delta}\,dz-\iint_{E(\La)^c}\Bigl(|v|^2-\frac{1}{2}|\vla|^2\Bigr)\pa_t\zeta_{\delta}\,dz.
\]
In order to conclude that the second term on the right-hand side vanishes as $\La \to \infty$, we write
\begin{align} \label{t_130}
    \begin{split}
        &-\iint_{E(\La)^c}\Bigl(|v|^2-\frac{1}{2}|\vla|^2\Bigr)\pa_t\zeta_{\delta}\,dz \\
        &\qquad= -\iint_{E(\La)^c}|v|^2\pa_t\zeta_{\delta}\,dz
         +\frac{1}{2} \iint_{E(\La)^c}|\vla|^2\pa_t\zeta_{\delta} \,dz.   
        \end{split}
\end{align}
Since $v \in L^2(Q_{R_2,S_2}(z_0),\RR^N)$, by the absolute continuity of integral the first term on the right-hand side of \eqref{t_130} vanishes as $\La \to \infty$. For the second term on the right-hand side of \eqref{t_130}, by Proposition~\ref{prop_liptr1}\,\ref{p6} we obtain
\[
        \iint_{E(\La)^c}|\vla|^2|\pa_t\zeta_{\delta}| \,dz \leq \iint_{E(\La)^c}\La^\frac{2}{p}|\pa_t\zeta_{\delta}| \,dz\leq c(\delta)\La|E(\La)^c|.
\]
By \eqref{convergence} we conclude that
\[
	\lim_{\La\to\infty}\mathrm{I}_{11}=\iint_{Q_{R_{2},S_2}(z_0)}-\frac{1}{2}|v|^2\eta^{q-1}\pa_t\zeta_{\delta}\,dz.
\]

To estimate the second term, $\mathrm{I}_{12}$, note that by Young's inequality $v\cdot(v-\vla) \leq 2|v|^2+|\vla|^2$ and thus
\[
		|\mathrm{I}_{12}|
		\le 2\iint_{E(\La)^c}(|v|^2+|\vla|^2)|\pa_t\zeta_\delta|\,dz.
\]
As the integral above contains the same terms as \eqref{t_130}, we get $\lim_{\La\to\infty}\mathrm{I}_{12}=0$ as before.
Finally, it follows from Proposition~\ref{prop_liptr1}\,\ref{p5} and \eqref{convergence} that $\lim_{\La \to \infty} \mathrm{I}_{13} = 0$.
Combining the convergences for $\mathrm{I}_{11}$, $\mathrm{I}_{12}$ and $\mathrm{I}_{13}$, we conclude that 
\[
	\lim_{\La\to\infty}\lim_{h\to 0^+}\mathrm{I}_{1}=\iint_{Q_{R_{2},S_2}(z_0)}-\frac{1}{2}|v|^2\eta^{q-1}\pa_t\zeta_{\delta}\,dz.
\]

It remains to estimate $\mathrm{I_2}$. As in the argument for $\mathrm{I}_1$, we have
\[
    \lim_{h \to 0} \mathrm{I}_2= -\iint_{Q_{R_2,S_2}(z_0)}(u-\uu)\cdot\vla\eta^q\zeta_{\delta}\pa_t\zeta\,dz.
\]
Estimating with the absolute value and dividing into the good and bad sets, we obtain
\begin{align*}
	\begin{split}
		 &-\iint_{Q_{R_2,S_2}(z_0)}(u-\uu)\cdot\vla\eta^q\zeta_{\delta}\pa_t\zeta\,dz\\
   &\qquad\ge-\iint_{Q_{R_2,S_2}(z_0)\cap E(\La)}|u-\uu|^2 |\pa_t\zeta|\,dz
		-\iint_{ E(\La)^c}|u-\uu||\vla| |\pa_t\zeta|\,dz.
	\end{split}
\end{align*}
We show that the integral on the bad set vanishes as $\La \to \infty$. H\"older's inequality gives
\begin{align*}
	\begin{split}
		&\iint_{E(\La)^c}|u-\uu||\vla| |\pa_t\zeta|\,dz\\
		&\qquad\le \left(\iint_{Q_{R_2,S_2}(z_0)\cap E(\La)^c}|u-\uu|^2|\pa_t\zeta|^2\,dz\right)^\frac{1}{2}	\left(\iint_{E(\La)^c}|\vla|^2\,dz\right)^\frac{1}{2},
	\end{split}
\end{align*}
and the first integral vanishes when $\La \to \infty $ as $|u|\in L^2(\Om_T)$. 
For the second integral on the right-hand side, by Proposition~\ref{prop_liptr1}\,\ref{p6} and \eqref{convergence} we have
\[
    \lim_{\La \to \infty}\iint_{E(\La)^c}|\vla|^2\,dz \leq \lim_{\La \to \infty} \La^\frac{2}{p}|E(\La)^c|\leq \lim_{\La \to \infty}\La|E(\La)^c| =0.
\]
Therefore the integral over the bad set vanishes at the limit and we obtain
\[
	\lim_{\La\to\infty}\lim_{h\to 0^+}\mathrm{I}_2\ge-\iint_{Q_{R_2,S_2}(z_0)}|u-\uu|^2|\pa_t\zeta|\,dz.
\]
Combining the estimates for $\mathrm{I}_1$ and $\mathrm{I}_2$, we conclude that
\[
	\lim_{\La\to\infty}\lim_{h\to 0^+}\mathrm{I}
	\ge\iint_{Q_{R_{2},S_2}(z_0)}-\frac{1}{2}|v|\eta^{q-1}\pa_t\zeta_{\delta}\,dz-\iint_{Q_{R_2,S_2}(z_0)}|u-\uu|^2|\pa_t\zeta|\,dz.
\]

Next we estimate 
\[
		\mathrm{II}=
		\iint_{Q_{R_2,S_2}(z_0) }[\mA(\cdot,\na u)]_h\cdot\na \left(v^\La_h\eta^{q}\zeta\zeta_{\delta}\right)\,dz.
\]
We justify the convergence of the above terms as $h\to 0^+$. Observe that the structure condition \eqref{12} gives 
\[
|[\mathcal{A}(\cdot,\na u)]_h(z)|\le L[|\na u|^{p-1}+a|\na u|^{q-1}]_h(z).
\]
Moreover, we note that $|\na \left(v^\La_h\eta^{q}\zeta\zeta_{\delta}\right)|$
is uniformly bounded in $L^\infty(Q_{R_2,S_2}(z_0))$ for $h>0$ by Lemma~\ref{lem21} and Proposition~\ref{prop_liptr1}\,\ref{p7}. 
By convergence properties of the Steklov average, $[\mathcal{A}(\cdot,\na u)]_h\to \mathcal{A}(\cdot,\na u)$, $\na u_h\to \na u$ and $[u-u_0]_h\to u-u_0$ a.e. as $h\to 0^+$. This implies the convergence to the integral without Steklov averages and we obtain
\begin{align*}
	\begin{split}
		\lim_{h\to 0^+}\mathrm{II}&=
		\iint_{Q_{R_2,S_2(z_0)} \cap E(\La)}\mA(z,\na u)\cdot\na \left((u-\uu)\eta^{q+1}\zeta^2\zeta_{\delta}\right)\,dz\\
		&\qquad+\iint_{Q_{R_2,S_2}(z_0)\cap E(\La)^c}\mA(z,\na u)\cdot\na \left(\vla\eta^q\zeta\zeta_{\delta}\right)\,dz=\mathrm{II}_1+\mathrm{II}_2.
	\end{split}
\end{align*}

There is no $\Lambda$ inside the integral over the good set  and we can immediately let $ \La \to 0$. With the product rule of the gradient and estimating with \eqref{12}
and \eqref{42_1}, we get
\begin{align*}
	\begin{split}
		\lim_{\La\to\infty}\lim_{h\to0^+}\mathrm{II}_1 &= \iint_{Q_{R_2,S_2}(z_0)}\left(\mA(z,\na u)\cdot\na u\right) \eta^{q+1}\zeta^2\zeta_{\delta}\,dz\\
  &\qquad+ \iint_{Q_{R_2,S_2}(z_0)}\mA(z,\na u)\cdot(u-u_0)\na(\eta^{q+1})\zeta^2\zeta_{\delta}\,dz\\
&\ge \nu\iint_{Q_{R_2,S_2}(z_0)} (|\na u|^p+a(z)|\na u|^q)\eta^{q+1}\zeta^2\zeta_{\delta}\,dz\\
		&\qquad-(q+1)L\iint_{Q_{R_2,S_2}(z_0)} (|\na u|^{p-1}+a(z)|\na u|^{q-1})\frac{|u-\uu|}{R_2-R_1} \eta^q\zeta^2\zeta_{\delta} \,dz.
	\end{split}
\end{align*}
Applying Young's inequality twice with conjugate pairs $(\tfrac{p}{p-1},p)$ and $(\tfrac{q}{q-1},q)$, we obtain
\begin{align} \label{t_155}
    \begin{split}
        &(q+1)L\iint_{Q_{R_2,S_2}(z_0)} (|\na u|^{p-1}+a(z)|\na u|^{q-1})\frac{|u-\uu|}{R_2-R_1} \eta^q\zeta^2\zeta_{\delta} \,dz \\
        & \qquad \leq \frac{\nu}{2}\iint_{Q_{R_2,S_2}(z_0)} (|\na u|^p+a(z)|\na u|^q)\eta^{q+1}\zeta^{2}\zeta_{\delta} \,dz\\
        & \qquad\qquad + c(p,q,\nu,L)\iint_{Q_{R_2,S_2}(z_0)}\left(\frac{|u-\uu|^p}{(R_2-R_1)^{p}}+a(z)\frac{|u-\uu|^q}{(R_2-R_1)^q}\right)\,dz,
    \end{split}
\end{align}
where we were also estimated $\eta$ and $\zeta$ by one in the last term. After absorbing terms we arrive at the desired estimate
\begin{align}\label{t_156}
	\begin{split}
\lim_{\La\to\infty}\lim_{h\to0^+}\mathrm{II}_1& \geq \frac{\nu}{2}\iint_{Q_{R_2,S_2}(z_0)} (|\na u|^p+a(z)|\na u|^q)\eta^{q+1}\zeta^2\zeta_{\delta}\,dz\\
		& \qquad -c(p,q,\nu,L)\iint_{Q_{R_2,S_2}(z_0)}\left(\frac{|u-\uu|^p}{(R_2-R_1)^{p}}+a(z)\frac{|u-\uu|^q}{(R_2-R_1)^q}\right)\,dz.
	\end{split}
\end{align}

We claim that $\mathrm{II}_2$ vanishes at the limit. Indeed, by the product rule and \eqref{12} we have
\begin{align*}
    \begin{split}
        |\mathrm{II}_2|&\leq\iint_{Q_{R_2,S_2}(z_0)\cap E(\La)^c}|\mA(z,\na u)|\left|(\na \vla) \eta^q\zeta \zeta_\delta + \vla q(\na\eta) \eta^{q-1}\zeta\zeta_{\delta}\right|\,dz\\
        &\leq c(L,q,R_2,R_1)\iint_{Q_{R_2,S_2}(z_0)\cap E(\La)^c}(|\na u|^{p-1}+a(z)|\na u|^{q-1})(|\na \vla|+ |\vla|)\,dz.
    \end{split}
\end{align*}
Applying Young's inequality, Proposition~\ref{prop_liptr1}\,\ref{p6} and \eqref{convergence}, we conclude that
\[
    \lim_{\La\to \infty}|\mathrm{II}_2| \leq c\lim_{\La \to \infty} \left( \iint_{Q_{R_2,S_2}(z_0)\cap E(\La)^c}(|\na u|^p+a(z)|\na u|^q) \,dz +\La|E(\La)|^c\right)=0,
\]
where the convergence of the integral is justified by the integrability assumption of a weak solution. Therefore, we got from \eqref{t_156}
the desired estimate  
\begin{align*}
	\begin{split}
		\lim_{\La\to\infty}\lim_{h\to0^+}\mathrm{II}&\ge\frac{\nu}{2}\iint_{Q_{R_2,S_2}(z_0)} H(z,|\na u|)\eta^{q+1}\zeta^2\zeta_{\delta}\,dz\\
		&\qquad-c\iint_{Q_{R_2,S_2}(z_0)}H\left(z,\frac{|u-\uu|}{R_2-R_1}\right)\,dz.
	\end{split}
\end{align*}

The argument to estimate $\mathrm{III}$ is very similar to the argument for $\mathrm{II}$. 
Again, we split into good and bad parts to get
\begin{align*}
	\begin{split}
		\lim_{h\to 0^+}\mathrm{III}&=
		\iint_{Q_{R_2,S_2}(z_0)\cap E(\La)}\left(|F|^{p-2}F+a(z)|F|^{q-2}F\right)\cdot \na\left((u-\uu)\eta^{q+1}\zeta^2\zeta_{\delta}\right)\,dz\\
		&\qquad+\iint_{Q_{R_2,S_2}(z_0)\cap E(\La)^c}\left(|F|^{p-2}F+a(z)|F|^{q-2}F\right)\cdot \na\left(\vla\eta^q\zeta\zeta_{\delta}\right)\,dz=\mathrm{III}_1+\mathrm{III}_2.
	\end{split}
\end{align*}

Applying Young's inequality as in \eqref{t_155}, we get 
\begin{align*}
	\begin{split}
		\lim_{\La\to\infty}\lim_{h\to0^+}\mathrm{III}_1
		&\le c\iint_{Q_{R_2,S_2}(z_0)}\left(H\left(z,\frac{|u-\uu|}{R_2-R_1}\right)+H(z,|F|)\right)\,dz\\
		&\qquad+\frac{\nu}{4}\iint_{Q_{R_2,S_2}(z_0)}H(z,|\na u|)\eta^{q+1}\zeta^2\zeta_{\delta}\,dz,
	\end{split}
\end{align*}
where $c=c(p,q,\nu)$.
Using the same arguments with $|\na u|$ replaced by $|F|$, we can estimate $\mathrm{III}_2$ in the same way as $\mathrm{II}_2$. Therefore, we conclude that $\lim_{\La\to\infty}\lim_{h\to0^+}\mathrm{III}_2=0$ and
\begin{align*}
	\begin{split}
		\lim_{\La\to\infty}\lim_{h\to0^+}\mathrm{III}
		& \le  c\iint_{Q_{R_2,S_2}(z_0)}\left(H\left(z,\frac{|u-\uu|}{R_2-R_1}\right)+H(z,|F|)\right)\,dz\\
		&\qquad+\frac{\nu}{4}\iint_{Q_{R_2,S_2}(z_0)}H(z,|\na u|)\eta^{q+1}\zeta^2\zeta_{\delta}\,dz.
	\end{split}
\end{align*}

Combining the estimates for $\mathrm{I}, \mathrm{II} $ and $\mathrm{III}$ at the limit of $\La$ and $h$, reordering and absorbing the last term of $\mathrm{III}$ with the first term of $\mathrm{II}$, we get
\begin{align*}
\begin{split}
    &\iint_{Q_{R_{2},S_2}(z_0)}-\frac{1}{2}|v|^2\eta^{q-1}\pa_t\zeta_{\delta}\,dz + \frac{\nu}{4}\iint_{Q_{R_2,S_2}(z_0)}H(z,|\na u|)\eta^{q+1}\zeta^2\zeta_{\delta}\,dz
		  \\
   &\qquad \leq c\iint_{Q_{R_2,S_2}(z_0)}\left(H\left(z,\frac{|u-\uu|}{R_2-R_1}\right)+|u-\uu|^2|\pa_t\zeta|+H(z,|F|)\right)\,dz.
\end{split}
\end{align*}
Taking $\de\to0^+$ and recalling that $\tau\in \ell_{S_1}(t_0)$ in \eqref{3_zetade} is arbitrary, we conclude that
\begin{align*}
	\begin{split}
		&\sup_{t\in \ell_{S_1}(t_0)}\fint_{B_{R_{1}}(x_0)}\frac{|u-u_0|^2}{S_1}\,dx+\fiint_{Q_{R_1,S_1}(z_0)} H(z,|\na u|) dz\\
		&\qquad\le c\fiint_{Q_{R_2,S_2}(z_0)}\left(H\left(z,\frac{|u-\uu|}{R_2-R_1}\right) + \frac{|u-\uu|^2}{S_2-S_1} + H(z,|F|)\right)\,dz,
	\end{split}
\end{align*}
where we made the integration domain smaller on the left hand side and used the definitions for the cutoff functions in \eqref{42_1}. Also note the use of the fact that $|Q_{R_2,S_2}|\approx |Q_{R_1,S_1}|$ by $R_1\in[R_2/2,R_2)$ and $S_1\in [S_2/2^2,S_2)$. The proof is completed by replacing $u_0$ with $(u)_{Q_{R_1,S_1}(z_0)}$ on the first term on the left-hand side.

\section{Existence and uniqueness}
In this section, we consider the Dirichlet problem \eqref{e2}. 
We apply the following standard estimates for the $p$-Laplace structure with $p\geq2$. 
There exists a constant $c=c(n,N,p)$ such that
\begin{align}\label{e3}
    c|\xi_1-\xi_2|^p\le (|\xi_1|^{p-2}\xi_1-|\xi_2|^{p-2}\xi_2)\cdot(\xi_1-\xi_2)
\end{align}
and
\begin{align}\label{e4}
    ||\xi_1|^{p-2}\xi_1-|\xi_2|^{p-2}\xi_2|\le c(|\xi_1|^{p-2}+|\xi_2|^{p-2})|\xi_1-\xi_2|
\end{align}
for every  $\xi_1,\xi_2\in\mathbb{R}^{Nn}$.

\subsection{Proof of Theorem~\ref{main2}}\label{subsec_uni}
In this subsection, we provide the uniqueness of weak solutions to \eqref{e2}.
Assume that $u$ and $w$ are weak solutions of \eqref{e2}.
Then we have \eqref{e5} with \eqref{e7} and \eqref{e8}.
Note that since $(u-w)(\cdot,R^2)\not\equiv0$ in $D_R$ may happen, the truncated function is not able to be defined as zero on the bad set as in \eqref{422_2}. We overcome this difficulty by extending the system \eqref{e5} to $D_R\times \mathbb{R}$.
We first extend $u\equiv w\equiv g$ in $D_R\times (-\infty,-R^2]$. Then we have
\begin{align}\label{e9}
    (u-w)_t-\dv (\mathcal{A}(z,\na u)-\mathcal{A}(z,\na w))=0 \quad\text{in}\  D_R\times (-\infty,R^2).
\end{align}
Indeed, for any $\varphi\in C_0^\infty(D_R\times (-\infty,R^2),\RR^N)$, we have
\[
        -\iint_{D_R\times (-\infty,R^2)}(u-w)\cdot \varphi_t\,dz
        =-\lim_{\ep\to0^+}\iint_{ C_R}(u-w)\cdot (\varphi\zeta_\epsilon)_t\,dz,
\]
where $\zeta_\epsilon$ is a function defined as
\begin{align*}
    \zeta_\epsilon(t)=
    \begin{cases}
        0,&t\in(-\infty,-R^2),\\
        \frac{1}{\epsilon}t,&t\in[-R^2,-R^2+\ep],\\
        1,&t\in(-R^2+\ep,\infty).
    \end{cases}
\end{align*}
Here we used \eqref{e8} to have
\[
        \lim_{\ep\to0^+}\iint_{ C_R}(u-w)\cdot\varphi(\zeta_\epsilon)_t\,dz=\lim_{\ep\to0^+}\fint_{-R^2}^{-R^2+\ep}\int_{D_R}(u-w)\cdot \varphi\,dx\,dt=0.
\]
Taking $\varphi\zeta_\epsilon$ as a test function in \eqref{e5}, it follows that
\begin{align*}
    \begin{split}
        &-\iint_{D_R\times (-\infty,R^2)}(u-w)\cdot \varphi_t\,dz=-\lim_{\ep\to0^+}\iint_{ C_R}(u-w)\cdot(\varphi\zeta_\epsilon)_t\,dz\\
        &\qquad=-\lim_{\ep\to0^+}\iint_{ C_R}(\mathcal{A}(z,\na u)-\mathcal{A}(z,\na w))\cdot \na\varphi \zeta_\epsilon\,dz\\
        &\qquad=-\iint_{D_R\times (-\infty,R^2)}(\mathcal{A}(z,\na u)-\mathcal{A}(z,\na w))\cdot \na \varphi\,dz.
    \end{split}
\end{align*}
This means that \eqref{e9} holds. For the range $D_R\times \RR$, we remark that there is no possible extension to have $(u-w)_t-\dv (\mathcal{A}(z,\na u)-\mathcal{A}(z,\na w))=0$ in $D_R\times \mathbb{R}$.
In fact, by considering $G(z)=(\mathcal{A}(z,\na u)-\mathcal{A}(z,\na w))$ and extending $u$, $w$ and $G$ to be
\begin{align}\label{e16_2}
    (u-w)(x,t)=(u-w)(x,2R^2-t),\quad G(x,t)=-G(x,2R^2-t)
\end{align}
for $(x,t)\in D_R\times (R^2,\infty)$, we have $(u-w)_t-\dv G=0$ in $D_R\times \mathbb{R}$.
For any $\varphi\in C_0^\infty(D_R\times \RR,\RR^{N})$, consider
\[
    \psi(x,t)=\varphi(x,t)-\varphi(x,2R^2-t).
\]
Note that $\psi\in W^{1,\infty}_0(D_R\times(-\infty,R^2),\RR^N)$. Taking $\psi$ as a test function in \eqref{e9}, we obtain
\begin{align*}
    \begin{split}
        0
    &=\iint_{D_R\times (-\infty,R^2)}-(u-w)\cdot\psi_t\,dz+\iint_{B_R\times (-\infty,R^2)}G\cdot\na\psi\,dz\\
        &=\iint_{D_R\times (-\infty,R^2)}\left(-(u-w)\cdot\varphi_t+G\cdot\na\varphi\right)\,dz\\
        &\qquad+\iint_{D_R\times (-\infty,R^2)}\left(-(u-w)(x,t)\cdot\varphi_t(x,2R^2-t)-G(x,t)\cdot \na \varphi(x,2R^2-t)\right)\,dz,
    \end{split}
\end{align*}
and a change of variables gives
\[
    \iint_{D_R\times\RR}\left(-(u-w)\cdot\varphi_t+G\cdot\na\varphi\right)\,dz=0.
\]
Therefore, $u-w\equiv0$ in $(D_R\times (-R^2,3R^2))^c$ and 
\begin{align}\label{e16}
    (u-w)_t-\dv G=0\quad \text{in}\ D_R\times \RR.
\end{align}
We also observe that if we extend 
\[
    u(x,t)=u(x,2R^2-t),\quad w(x,t)=w(x,2R^2-t),\quad a(x,t)=a(x,2R^2-t)
\]
in $(x,t)\in D_R\times (-R^2,3R^2)$
then the first extension of \eqref{e16_2} is satisfied and
\[
    \na u(x,t)=\na u(x,2R^2-t),\quad \na w(x,t)=\na w(x,2R^2-t)
\]
is well-defined in $(x,t)\in D_R\times (-R^2,3R^2)$ with
\[
    H(z,|\na u|),\, H(z,|\na w|)\in L^1(D_R\times (-R^2,3R^2)).
\]

With these extensions we  define a function $f:\mathbb{R}^{n+1}\to \mathbb{R}$ as
\[
    f(z)=(|\na u(z)|+|\na w(z)|+|u(z)|+|w(z)|)\rchi_{D_{R}\times (-R^2,3R^2)}(z).
\]
It is easy to see that $H(z,f)\in L^1(\mathbb{R}^{n+1})$. The function $f$ plays the same role in constructing an open subset in $\mathbb{R}^{n+1}$ as in Section~\ref{sec_liptr}.
To employ the Lipschitz truncation method, it needs to be shown that $|G|$ is bounded by $f^{p-1}+af^{q-1}$. Indeed, using \eqref{e4} and \eqref{d9}, there exists $c=c(n,N,p,q,L)$ such that
\[
    |G|\le c(|\na u|^{p-2}+|\na w|^{p-2})|\na u-\na w|+ca(|\na u|^{q-2}+|\na w|^{q-2})|\na u-\na w|,
\]
and applying triangle inequality, to have $|\na u-\na w| \leq |\na u|+|\na w|$, and Young's inequality, we obtain
\begin{align}\label{e18}
    |G|\le c\left((|\na u|+|\na w|)^{p-1}+a(|\na u|+|\na w|)^{q-1}\right)\le c(f^{p-1}+af^{q-1})
\end{align}
in $D_{R}\times \RR$ as $G$ is extended $0$ below $\{t=-R^2\}$ and then extended oddly above $\{t=R^2\}$. Denoting $v=u-w$, we have that $v_h$ is well defined in $D_{R}\times \mathbb{R}$ and 
\begin{align}\label{e19}
    \pa_tv_h-\dv G_h=0\quad\text{in}\ D_{R}\times \{t\}
\end{align}
for a.e. $t\in \mathbb{R}$. We follow the proof of Lemma~\ref{lem32} by taking the test function $\varphi\psi_\delta$ in \eqref{e16} and \eqref{e19} with \eqref{e18} to obtain the following estimates. We omit the details. 
We keep track of the dependencies and denote
\[
  \data=n,N,p,q,\alpha,\nu,L,[a]_\alpha,R,K,\|a\|_{L^\infty(\RR^{n+1})}.
\]
      
\begin{lemma}
    Let $Q=B_r\times \ell_s$ be a cylinder and satisfying $B_r\subset D_R$. There exists a constant $c=c(\data)$ such that
    \[
        \fiint_{Q}|v_h-(v_h)_Q|\,dz\le csr^{-1}\fiint_{[Q]_h}\left(f^{p-1}+af^{q-1}\right)\,dz+cr\fiint_{[Q]_h}f\,dz.
    \]
    Moreover, the above estimate holds with $v_h$ and $[Q]_h$ replaced by $v$ and $Q$.
\end{lemma}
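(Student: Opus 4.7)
The plan is to mirror the proof of Lemma~\ref{lem32}, exploiting the pointwise bound $|G|\le c(f^{p-1}+af^{q-1})$ from \eqref{e18} together with the fact that, after the extensions carried out in Subsection~\ref{subsec_uni}, the Steklov-averaged equation $\pa_t\vh-\dv G_h=0$ holds on all of $D_R\times\RR$; see \eqref{e19}. Because this extended equation carries no source term and requires no spatial-temporal cutoffs of the type $\eta\zeta$ used in the proof of Lemma~\ref{lem32}, only the ``space-derivative'' and ``Poincar\'e'' contributions appear on the right-hand side, so the $s\fiint f$ middle term of Lemma~\ref{lem32} is absent.

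First I would fix $t_1<t_2$ in $\ell_s$ and introduce, exactly as in the proof of Lemma~\ref{lem32}, a piecewise-linear temporal cutoff $\zeta_\delta\in W_0^{1,\infty}(\RR)$ equal to $1$ on $[t_1,t_2]$ with affine transitions of width $\delta>0$, together with a nonnegative weight $\vp\in C_0^\infty(B_r)$ satisfying $\fint_{B_r}\vp\,dx=1$, $\lVert\vp\rVert_\infty\le c(n)$ and $\lVert\na\vp\rVert_\infty\le c(n)/r$. The function $\vp\,\zeta_\delta$ is admissible in \eqref{e19} because $B_r\subset D_R$ and $\vh$ is defined on the whole of $D_R\times\RR$. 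Testing \eqref{e19} against $\vp\zeta_\delta$, using \eqref{e18} and Lemma~\ref{lem21}, and letting $\delta\to 0^+$ yields, via one-dimensional Lebesgue differentiation,
\[
    \esssup_{t_1,t_2\in\ell_s}\bigl|(\vh\vp)_{B_r}(t_1)-(\vh\vp)_{B_r}(t_2)\bigr|\le c\,s\,r^{-1}\fiint_{[Q]_h}\bigl(f^{p-1}+af^{q-1}\bigr)\,dz,
\]
which is the analog of \eqref{t_320_1} without the $s\fiint f$ source contribution.

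Second, I would recover the full oscillation by the triangle-plus-Poincar\'e decomposition of Lemma~\ref{lem32}. Writing
\[
    \fiint_Q|\vh-(\vh)_Q|\,dz\le cr\fiint_Q|\na\vh|\,dz+\fiint_Q\bigl|(\vh)_{B_r}-(\vh)_Q\bigr|\,dz,
\]
the first term is bounded by $cr\fiint_{[Q]_h}f\,dz$ because $|\na\vh|\le[|\na u|+|\na w|]_h\le[f]_h$ by the definition of $f$, combined with Lemma~\ref{lem21}; the second term is split by inserting the weighted slice-average $(\vh\vp)_{B_r}$, where the difference $(\vh)_{B_r}-(\vh\vp)_{B_r}$ is controlled by $cr\fiint_Q|\na\vh|$ via the spatial Poincar\'e inequality exactly as in \eqref{t_325}, and the time oscillation of $(\vh\vp)_{B_r}$ is precisely the supremum estimated above. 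The only point requiring care is verifying admissibility of $\vp\zeta_\delta$ in \eqref{e19}, but this is immediate from $B_r\subset D_R$ and the fact that $\vh$ is defined on the whole extended strip, ruling out any boundary contributions. The version without Steklov averages follows either by repeating the same argument against the distributional identity \eqref{e16} with the same test function $\vp\zeta_\delta$, or by passing to the limit $h\to 0^+$ in the Steklov inequality just proved, since $[Q]_h=Q_{r,s+h}\to Q$ and standard convergence properties of Steklov averages apply to $f$ and its powers.
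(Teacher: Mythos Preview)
Your proposal is correct and follows essentially the same approach as the paper. The paper itself simply states that one should ``follow the proof of Lemma~\ref{lem32} by taking the test function $\vp\zeta_\delta$ in \eqref{e16} and \eqref{e19} with \eqref{e18}'' and omits the details; your outline supplies exactly those details, correctly identifying that the absence of the cutoffs $\eta,\zeta$ in the definition of $v$ eliminates the middle term $cs\fiint f$ from \eqref{t_320_1}.
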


 As \eqref{e19} is defined on $\RR$, the second estimate in Lemma~\ref{lem32} is not necessary. On the other hand, we have used Lemma~\ref{lem22} to deal with the case of $B_r$ intersecting with the complement of the given domain in Section~\ref{sec_liptr}. Since a cube $D_R$ satisfies the measure density condition, we may replace $B_{\rho}(x_0)$ by $D_R$ in the statement of Lemma~\ref{lem22}. We refer to \cite[Example 6.18 and Theorem 6.22]{MR4306765} for details. It is also noteworthy to mention that $D_R$ has a Lipschitz boundary and this fact along with \eqref{e7} guarantees that $u-w\in L^s(I_{3R};W_0^{1,s}(D_R;\mathbb{R}^N))$ for $1<s\le p$. Therefore, we may apply Lemma~\ref{lem22} with $1<s<p$ as in Section~\ref{sec_liptr}.
With the Whitney decomposition $\{U_i=B_i\times I_i\}_{i\in\mathbb{N}}$ and partition of unity $\{\om\}_{i\in\mathbb{N}}$, we define the Lipschitz truncation of $v_h$ and $v$ as
\[
    v_h^\La(z)=v_h(z)-\sum_{i\in\mathbb{N}}(v_h(z)-v_h^i)\om_i(z)
    \quad\text{and}\quad
    v^\La(z)=v(z)-\sum_{i\in\mathbb{N}}(v(z)-v^i)\om_i(z),
\]
where
\begin{align*}
    v_h^i=
    \begin{cases}
        (v_h)_{2U_i},&\text{if}\ 2B_i\subset D_R,\\
        0,&\text{otherwise},
    \end{cases}
    \qquad v^i=
    \begin{cases}
        (v)_{2U_i},&\text{if}\ 2B_i\subset D_R,\\
        0,&\text{otherwise}.
    \end{cases}
\end{align*}
We now state the Lipschitz truncation result analogous to Proposition~\ref{prop_liptr1} in Section~\ref{sec_liptr}. Again, the proof is omitted.
\begin{proposition}\label{uni_prop}
Let $E(\La)$ be as in \eqref{def_E}.
There exist functions $\{v_h^\Lambda\}_{h>0}$ and a function $v^\Lambda$ satisfying the following properties: 
\begin{enumerate}[label=(\roman*),series=theoremconditions]
    \item $v_h^\Lambda\in W^{1,2}(I_{3R};L^2(D_R;\RR^N))\cap L^\infty(I_{3R};W_0^{1,\infty}(D_R,\RR^N))$.
    \item $v^\Lambda\in L^\infty(I_{3R};W_0^{1,\infty}(D_R,\RR^N))$.
    \item $v_h^\La= v_h$, $v^\La= v$, $\na v_h^\La= \na v_h$, $\na v^\La= \na v$ a.e. in $E(\La)$.
    \item $v_h^\Lambda\to v^\Lambda$ in $L^\infty(D_{R}\times I_{3R},\RR^N)$ as $h\to0^+$, taking a subsequence if necessary.
    \item $\na v_h^\La\to \na v^\La$ and $\pa_t v_h^\La\to \pa_tv^\La$ a.e. in $E(\La)^c$ as $h\to0^+$.
\end{enumerate}
Moreover, there exist constants $c=c(\data)$ and $c_\La=c_\La(\data,\La)$ such that
\begin{enumerate}[resume*=theoremconditions]
    \item $\begin{aligned}[t]
    \iint_{E(\La)^c}|v-v^{\Lambda}||\pa_tv^\Lambda|\,dz\le c\Lambda|E(\La)^c|,
    \end{aligned}$
    \item $\begin{aligned}[t]
    \iint_{E(\La)^c}|v_h-v_h^\La||\pa_t v_h^\La|\,dz\le c_\La|E(\La)^c|,
    \end{aligned}$
    \item $H(z,|v^\La(z)|)+H(z,|\na v^\La(z)|)\le c\La$ for a.e. $z\in\mathbb{R}^{n+1}$,
    \item $H(z,|v_h^\La(z)|)+H(z,|\na v_h^\La(z)|)\le c_\La$ for a.e. $z\in \RR^{n+1}.$
\end{enumerate}
\end{proposition}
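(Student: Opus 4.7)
The plan is to mirror the proof of Proposition~\ref{prop_liptr1}, with the parabolic domain $Q_{R_2,S_2}(z_0)$ replaced by $D_R\times I_{3R}$ and the inhomogeneous equation \eqref{11} replaced by the homogeneous equation \eqref{e16} in $D_R\times\mathbb{R}$, which satisfies $|G|\le c(f^{p-1}+af^{q-1})$ by \eqref{e18}. First I would apply Proposition~\ref{prop_whitney} with the function $f$ defined just before the statement and threshold $\Lambda$ to obtain the Whitney decomposition $\{U_i\}_{i\in\mathbb{N}}$ of $E(\Lambda)^c$ and the subordinate partition of unity $\{\omega_i\}_{i\in\mathbb{N}}$. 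The truncated functions $v_h^\Lambda$ and $v^\Lambda$ are then defined exactly by the formulas preceding the proposition.

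The two key technical ingredients are the Poincar\'e estimates analogous to Lemma~\ref{lem32} and Lemma~\ref{lem_intrinsic_Poincare}. The former is already stated explicitly in the excerpt; its proof amounts to taking $\varphi\psi_\delta$ as a test function in the Steklov-averaged form of \eqref{e16}, where $\varphi\in C_0^\infty(B_r)$ has unit mass and $\psi_\delta$ is a piecewise linear temporal cutoff, and then using \eqref{e18} in place of the structure assumption \eqref{12} on the elliptic part. No spatial cutoff in the test function is needed since $v$ is already globally defined in time. For the Poincar\'e-type estimate on $K_iU_i$ when $K_iB_i\subset D_R$, the interior argument used to prove \eqref{t_369'2} and \eqref{t_369'1} transfers verbatim. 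When $K_iB_i\not\subset D_R$, I would invoke the cube version of Lemma~\ref{lem22} from \cite[Example 6.18 and Theorem 6.22]{MR4306765}, justified by the fact that $D_R$ is Lipschitz and satisfies the measure density condition, together with $v(\cdot,t)\in W_0^{1,s}(D_R,\mathbb{R}^N)$ for $1<s\le p$ from \eqref{e7}.

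Given these Poincar\'e estimates, the pointwise bounds $|v^\Lambda(z)|\le c\lambda_i$, $|\nabla v^\Lambda(z)|\le c\lambda_i$, and the corresponding estimate for $|\partial_t v^\Lambda|$ depending on whether $U_i=Q_i$ or $U_i=G_i$, follow for $z\in U_i$ as in Lemmas~\ref{lem_47}--\ref{lem_410}, with analogous estimates for the Steklov-averaged quantities. From these pointwise estimates, properties (i) and (ii) follow via the argument of Lemma~\ref{lem_lip}; property (iii) is immediate from $\omega_i|_{E(\Lambda)}\equiv 0$; property (iv) comes from Arzela-Ascoli; property (v) comes from convergence of Steklov averages together with the finiteness of $\mathcal{I}$. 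The integral estimates (vi) and (vii) follow from the disjointness of $\{\tfrac{1}{6K}U_i\}_{i\in\mathbb{N}}$, \eqref{convergence}, and the Poincar\'e-type inequality $\fiint_{2U_i}|v-v^i|\,dz\le cr_i\lambda_i$. Finally, the $H$-bounds (viii) and (ix) combine the pointwise estimates with Proposition~\ref{prop_whitney}\,\ref{UUi} and \ref{viii}, treating the cases $U_i=Q_i$ and $U_i=G_i$ separately. The main obstacle is the Poincar\'e estimate at the cube boundary, which is resolved by the cited measure-density-based extension of Lemma~\ref{lem22}; beyond this, the argument is a step-by-step transcription of the proofs in Section~\ref{sec_liptr}.
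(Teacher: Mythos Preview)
Your proposal is correct and matches the paper's approach exactly: the paper explicitly omits the proof, stating only that it is analogous to Proposition~\ref{prop_liptr1}, and the surrounding discussion highlights precisely the modifications you list---the simplified Poincar\'e lemma without the $\partial_t\zeta$ term (since \eqref{e19} holds on all of $\mathbb{R}$, so no temporal cutoff is needed and the second estimate \eqref{t_311} becomes unnecessary), and the replacement of Lemma~\ref{lem22} by its cube version via the measure density condition. One small clarification: your remark that ``no spatial cutoff is needed since $v$ is globally defined in time'' conflates two separate simplifications---it is the \emph{temporal} cutoff $\zeta$ that is dropped because the equation holds on $D_R\times\mathbb{R}$, while the spatial cutoff $\eta$ is dropped because $v=u-w$ already lies in $W_0^{1,p}(D_R)$.
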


In the remaining of this subsection, we apply the energy estimate and conclude Theorem~\ref{main2}.
For $\tau_1,\tau_2\in I_{R}$ with $\tau_1<\tau_2$, let $\delta>0$ and $h>0$ be sufficiently small so that $2\delta<\tau_2-\tau_1$ and $\tau_1,\tau_2\in I_{R-h}$. We let $\zeta_\delta\in W^{1,\infty}_0(I_{R-h})$ be a function defined as
\begin{align*}
    \zeta_\delta(t)=
    \begin{cases}
        \tfrac{1}{\delta}(t-\tau_1),&\tau_1\le t\le \tau_1+\delta,\\
        1, &\tau_1+\delta\le t\le \tau_2-\delta,\\
        1-\tfrac{1}{\delta}(t-\tau_2+\delta),&\tau_2-\delta\le t\le \tau_2,\\
        0, &\text{otherwise}.
    \end{cases}
\end{align*}
Let $t\in I_{R}$. We take $v_h^{\Lambda}\zeta_\delta$ as a test function in \eqref{e19}, that is equivalently
\[
    \pa_t[u-w]_h-\dv\left[\mathcal{A}(\cdot,\na u)-\mathcal{A}(\cdot,\na w)\right]_h=0\quad\text{in}\ D_R\times\{t\}.
\]
Recalling that $u-w=v$, we obtain
\begin{align}\label{e25}
    0=\iint_{ C_R}\pa_tv_h\cdot v_h^\Lambda\zeta_\delta\,dz+\iint_{ C_R}\left[\mathcal{A}(\cdot,\na u)-\mathcal{A}(\cdot,\na w)\right]_h\cdot \na v^\Lambda_h\zeta_\delta\,dz.
\end{align}
To estimate the first term on the right-hand side, we add and subtract $\pa_tv_h^\La$, then apply integration by parts to have
\begin{align*}
\begin{split}
    &\iint_{ C_R}\pa_tv_h\cdot v_h^\Lambda\zeta_\delta\,dz
        =\iint_{ C_R}\pa_t(v_h-v_h^\La)\cdot v_h^\Lambda\zeta_\delta\,dz+\iint_{ C_R}\frac{1}{2}(\pa_t|v_h^\Lambda|^2)\zeta_\delta\,dz\\
        &\qquad=-\iint_{ C_R}(v_h-v_h^\La)\cdot \pa_tv_h^\Lambda \zeta_\delta\,dz
         -\iint_{ C_R}(v_h-v_h^\La)\cdot v_h^\Lambda \pa_t\zeta_\delta\,dz
         -\iint_{ C_R}\frac{1}{2}|v_h^\Lambda|^2\pa_t\zeta_\delta\,dz.
\end{split}
\end{align*}
Following the argument in the proof of Theorem~\ref{thm_cac} by using Proposition~\ref{uni_prop}, we deduce
\begin{align*}
\begin{split}
    &\lim_{\La\to\infty}\lim_{h\to0^+}\iint_{ C_R}\pa_tv_h\cdot v_h^\Lambda\zeta_\delta\,dz\\
    &\qquad=\fint_{\tau_2-\delta}^{\tau_2}\int_{D_R}\frac{1}{2}|v|^2 \,dx\,dt-\fint_{\tau_1}^{\tau_1+\delta}\int_{D_R}\frac{1}{2}|v|^2 \,dx\,dt.
\end{split}
\end{align*}
The last term in \eqref{e25} is estimated as in the proof of Theorem~\ref{thm_cac} with Proposition~\ref{uni_prop}. Then we obtain
\begin{align*}
    \begin{split}
        &\lim_{\La\to \infty}\lim_{h\to0^+}\iint_{ C_R}\left[\mathcal{A}(\cdot,\na u)-\mathcal{A}(\cdot,\na w)\right]_h\cdot \na v^\Lambda_h\zeta_\delta\,dz\\
        &\qquad=\iint_{ C_R}(\mathcal{A}(\cdot,\na u)-\mathcal{A}(\cdot,\na w))\cdot (\na u-\na w)\zeta_\delta\,dz.
    \end{split}
\end{align*}
By \eqref{d9} and \eqref{e3} there exists a constant $c=c(n,N,p,q,\nu)$ such that
\begin{align*}
\begin{split}
    &\iint_{ C_R}(\mathcal{A}(\cdot,\na u)-\mathcal{A}(\cdot,\na w))\cdot (\na u-\na w)\zeta_\delta\,dz\\
    &\qquad\geq c\iint_{ C_R}\left(|\na u-\na w|^p+a|\na u-\na w|^q\zeta_\delta\right)\,dz\\
    &\qquad=c\iint_{ C_R}H(z,|\na u-\na w|)\zeta_\delta\,dz\ge 0.
\end{split}
\end{align*}
Hence, it follows from \eqref{e25} that
\[
    \fint_{\tau_2-\delta}^{\tau_2}\int_{D_R}|u-w|^2 \,dx\,dt\le\fint_{\tau_1}^{\tau_1+\delta}\int_{D_R}|u-w|^2 \,dx\,dt,
\]
where $\tau_1,\tau_2\in I_R$ are arbitrary and $\delta$ is sufficiently small. We set $\tau_1=\delta$ to have 
\[
    \fint_{\tau_2-\delta}^{\tau_2}\int_{D_R}|u-w|^2 \,dx\,dt
    \le \fint_{\delta}^{2\delta}\int_{D_R}|u-w|^2 \,dx\,dt\le 2\fint_{0}^{2\delta}\int_{D_R}|u-w|^2 \,dx\,dt.
\]
Letting $\delta \to 0^+$, it follows from \eqref{e8} that
\[
    \int_{D_R}|u-w|^2(x,\tau_2)\,dx\le 0
\]
for a.e. $\tau_2\in I_R$. We conclude $u= w$ a.e. in $ C_R$. This completes the proof of Theorem~\ref{main2}.

\subsection{Proof of Theorem~\ref{main3}}
In this subsection we assume $g\equiv0$ and $b(\cdot)$ satisfies VMO condition \eqref{la5}. We construct a sequence of weak solutions of the Dirichlet boundary problem for perturbed $q$-Laplace type systems. This sequence converges to a weak solution of \eqref{e2}.

\subsubsection{Approximation} 
For $k\in\mathbb N$, let $F^k$ be the truncated function
\[
    F^k(z)=F(z)\rchi_{\{|F(z)|\le k\}}.
\]
For $\epsilon>0$, we consider the function
\begin{align}\label{d98}
    F^k_\epsilon(z)=F^k(z)\rchi_{\{\left|F^k(z)\right|\le \ep^{-\frac{1}{q-p}}\}}.
\end{align}
and the perturbed operator and map
\[
    H_\epsilon(z,|\xi|)=|\xi|^p+a_\ep(z)|\xi|^q,\quad
    \mathcal{A}_{\epsilon}(z,\xi)=b(z)(|\xi|^{p-2}\xi +a_\ep(z)|\xi|^{q-2}\xi),
\]
where $a_\ep(z)=a(z)+\epsilon$.
We observe $\mathcal{A}_\epsilon(z,\xi)$ is $q$-Laplace type operator as it satisfies
\[
    \epsilon|\xi|^q\le \mathcal{A}_\epsilon(z,\xi)\cdot\xi,\quad |\mathcal{A}_\epsilon(z,\xi)|\le 2^{q}L(\|a\|_{\infty}+1)(|\xi|^{q-1}+1)
\]
and \eqref{e3} implies the monotonicity condition
\[
    (\mathcal{A}_\epsilon(z,\xi_1)-\mathcal{A}_\epsilon(z,\xi_2))\cdot (\xi_1-\xi_2)> 0 \quad\text{for any}\ \xi_1,\xi_2\in\mathbb{R}^{Nn}\ \text{with}\ \xi_1\ne\xi_2.
\]
For each $k\in\mathbb N$ and $\epsilon>0$,
there exists by standard existence theory for $q$-parabolic equations the unique weak solution $u^k_\epsilon\in C(I_R;L^2(D_R,\RR^N))\cap L^q(I_R;W^{1,q}_0(D_R,\RR^N))$ to the Dirichlet problem
\begin{align}\label{d99}
    \begin{cases}
        \pa_tu^k_\epsilon-\dv \mathcal{A}_{\epsilon}(z,\na u_\epsilon^k)=-\dv(|F_\ep^k|^{p-2}F_\ep^k+a_\ep|F_\ep^k|^{q-2}F_\ep^k)&\text{in}\ C_R,\\
        u^k_\epsilon=0\null\hfill&\text{on}\ \pa_p C_R.
    \end{cases}
\end{align}
\subsubsection{First convergence}
By the standard energy estimate for the $q$-Laplace problems, there exists a constant $c=c(p,q,\nu)$ such that
\begin{align}\label{e40}
        \sup_{t\in I_R}\int_{D_R}|u_\ep^k(x,t)|^2\,dx+\iint_{ C_R} H_\epsilon(z,|\na u^k_\epsilon|)\,dz\le c\iint_{ C_R}H_\epsilon(z,|F^k_\epsilon|)\,dz.
\end{align}
We observe from \eqref{d98} that
\begin{align}\label{e41}
    \epsilon|F^k_\epsilon(z)|^q=\epsilon|F^k_\epsilon(z)|^{q-p}|F^k_\epsilon(z)|^p\le |F^k(z)|^p\le|F(z)|^p.
\end{align}
Therefore, \eqref{e40} becomes
\begin{align}\label{e42}
    \sup_{t\in I_R}\int_{D_R}|u_\ep^k(x,t)|^2\,dx+\iint_{ C_R}H_\epsilon(z,|\na u^k_\epsilon|)\,dz\le c\iint_{ C_R}H(z,|F|)\,dz.
\end{align}
Let $k\in\mathbb N$. First we investigate the weak limit of $\{u^k_\epsilon\}_{\epsilon>0}$ when $\epsilon \to 0$. 
To apply a compactness result, we estimate $\{\pa_t u^k_\epsilon\}_{\ep>0}$.
Let $\varphi\in C_0^\infty( C_R,\RR^N)$ be a test function in \eqref{d99}. By \eqref{d9} there exists a constant $c=c(L)$ such that
\begin{align*}
    \begin{split}
        &\left|\iint_{ C_R}\pa_tu^k_\epsilon\cdot\varphi\,dz\right|
        \le c\iint_{ C_R}(|\na u^k_\epsilon|^{p-1}+|F^k_\epsilon|^{p-1})|\na \varphi|\,dz\\
        &\qquad+c\iint_{ C_R}a_\ep(z)(|\na u^k_\epsilon|^{q-1}+|F^k_\epsilon|^{q-1})|\na \varphi|\,dz.
    \end{split}
\end{align*}
By H\"older's inequality, there exists a constant $c=c(n,p,q,L,\lVert a\rVert_{\infty},R)$ such that
\begin{align*}
    \begin{split}
        &\left|\iint_{ C_R}\pa_tu^k_\epsilon\cdot\varphi\,dz\right|
        \le c\left(\iint_{ C_R}|\na u^k_\epsilon|^{p}+|F_\epsilon^k|^{p})\,dz\right)^\frac{1}{p'}\left(\iint_{ C_R}|\na\varphi|^{q}\,dz\right)^\frac{1}{q}\\
        &\qquad+c\left(\iint_{ C_R}a_\ep(z)(|\na u^k_\epsilon|^{q}+|F_\epsilon^k|^{q})\,dz\right)^\frac{1}{q'}\left(\iint_{ C_R}|\na\varphi|^q\,dz\right)^\frac{1}{q},
    \end{split}
\end{align*}
and using \eqref{e40} and \eqref{e42}, we obtain
\[
    \left|\iint_{ C_R}\pa_tu^k_\epsilon\cdot \varphi\,dz\right|\le c\left(\iint_{ C_R} H(z,|F^k|)\,dz+1\right)^\frac{1}{q'}\left(\iint_{ C_R}|\na\varphi|^q\,dz\right)^\frac{1}{q}.
\]
Thus we have
\begin{align}\label{e45}
     \lVert \pa_t u_\epsilon^k\rVert_{L^{q'}(I_R;W^{-1,q'}(D_R,\RR^N))}\le c\left(\iint_{ C_R} H(z,|F|)\,dz+1\right)^\frac{1}{q'}.
\end{align}

Since the function $b(\cdot)$ satisfies the VMO condition \eqref{la5}, we may apply a global Calder\'on-Zygmund estimate in \cite[Theorem 2.5]{CZ} for a weak solution to \eqref{d99}. Since \eqref{e41} holds, $[a+\ep]_{\alpha}=[a]_\alpha$ and $\|a+\epsilon\|_{L^\infty( C_R)}\le \|a\|_{L^\infty( C_R)}+1$, there exists a constant 
\[
c_k=c_k(n,N,p,q,[a]_{\alpha},R,\lVert H(z,|F^k|)\rVert_{L^\frac{q}{p}( C_R)},\|a\|_{L^\infty( C_R)})
\]
such that
\[
        \iint_{ C_R} H_\epsilon(z,|\na u^k_\epsilon|)^\frac{q}{p}\,dz\le c_k \quad\text{for every}\ \ep>0.
\]
In particular, we have
\begin{align}\label{e47}
    \iint_{ C_R}|\na u^k_\epsilon|^q\,dz\le c_k \quad\text{for every}\ \ep>0.
\end{align}
Since estimates \eqref{e45} and \eqref{e47} hold for every $\ep>0$ and we have 
\[
    L^q(I_R;W^{1,q}_0(D_R,\RR^N))\Subset L^q( C_R,\RR^N)\Subset  L^2( C_R,\RR^N) \subset L^{q'}(I_R;W^{-1,q'}(D_R,\RR^N)),
\]
a compactness result in \cite[Corollary 4, Section 8]{MR916688} implies that there exists a limit function $u^k\in C(I_R;L^2(D_R,\RR^N))\cap L^q(I_R;W^{1,q}_0(D_R,\RR^N))$ such that, by passing to a subsequence if necessary, we have
\begin{align*}
\begin{split}
    &u^k_\epsilon \to  u^k    \quad \text{strongly in}\  C(I_R;L^2(D_R,\RR^N))\cap L^q( C_R,\RR^N),\\
     &u^k_\epsilon \to u^k    \quad \text{weakly in}\ L^q(I_R;W^{1,q}_0(D_R,\RR^N)),\\
     &\pa_tu^k_\epsilon \to \pa_t u^k   \quad \text{weakly in}\ L^{q'}(I_R;W^{-1,q'}(D_R,\RR^N)),
\end{split}
\end{align*}
as $\ep\to0^+$. We claim that $u^k$ is the weak solution of
\begin{align}\label{e50}
    \begin{cases}
        \pa_tu^k-\dv\mathcal{A}(z,\na u^k)=-\dv(|F^k|^{p-2}F^k+a|F^k|^{q-2}F^k)&\text{in}\ C_R,\\
        u^k=0\null\hfill&\text{on}\ \pa_p C_R.
    \end{cases}
\end{align}
It suffices to show that
\begin{align}\label{e51}
    \iint_{ C_R}H(z,|\na u_\epsilon^k-\na u^k|)\,dz\to0 \quad\text{as}\ \epsilon\to0^+.
\end{align}
Indeed, the above strong convergence provides the weak convergence of operators. For any $\varphi\in C_0^\infty( C_R,\RR^N)$ there holds
\begin{align*}
    \begin{split}
        &\left|\iint_{ C_R}\left(\mathcal{A}_\epsilon(z,\na u^k_{\epsilon})-\mathcal{A}(z,\na u^k)\right)\cdot \na \varphi\,dz\right|\\
        &\qquad\le  \iint_{ C_R}|\mathcal{A}(z,\na u_\ep^k)-\mathcal{A}(z,\na u^k)||\na \varphi|\,dz+L\epsilon\iint_{ C_R}|\na u^k_\epsilon|^{q-1}|\na \varphi|\,dz.
    \end{split}
\end{align*}
By H\"older's inequality and \eqref{e42}, we obtain
\[
    \lim_{\ep\to0^+}\epsilon\iint_{ C_R}|\na u^k_\epsilon|^{q-1}|\na \varphi|\,dz\le \lim_{\ep\to0^+}\epsilon| C_R|^{1-\frac{q-1}{p}}\left(\iint_{ C_R}|\na u^k_\epsilon|^{p}\,dz\right)^\frac{q-1}{p}\| \na\varphi\|_{\infty}=0.
\]
We apply \eqref{e4} to estimate the remaining term. There exists a constant $c=c(n,N,p,q)$ such that
\begin{align*}
    \begin{split}
       &\iint_{ C_R}|\mathcal{A}(z,\na u_\ep^k)-\mathcal{A}(z,\na u^k)||\na \varphi|\,dz\\
    &\qquad\le c\iint_{ C_R}\left(|\na u^k_\epsilon|^{p-2}+|\na u^{k}|^{p-2}\right)|\na u^k_\epsilon-\na u^k||\na\varphi|\,dz\\
    &\qquad\qquad+c\iint_{ C_R}a(z)\left(|\na u^k_\epsilon|^{q-2}+|\na u^{k}|^{q-2}\right)|\na u^k_\epsilon-\na u^k||\na \varphi|\,dz.
    \end{split}
\end{align*}
Applying H\"older's inequality and \eqref{e42}, there exists $c=c(n,N,p,q,\lVert a\rVert_{\infty},R)$ such that
\begin{align*}
   \begin{split}
       &\iint_{ C_R}|\mathcal{A}(z,\na u_\ep^k)-\mathcal{A}(z,\na u^k)||\na \varphi|\,dz\\
       &\qquad\le c\left( \iint_{ C_R}H(z.|F|)\,dz\right)^\frac{p-2}{p}\left( \iint_{ C_R}|\na u^k_\epsilon-\na u^k|^p\,dz\right)^\frac{1}{p}\| \na\varphi\|_{\infty}\\
       &\qquad\qquad +c\left( \iint_{ C_R}H(z,|F|)\,dz\right)^\frac{q-2}{q}\left( \iint_{ C_R}a(z)|\na u^k_\epsilon-\na u^k|^q\,dz\right)^\frac{1}{q}\| \na\varphi\|_{\infty}.
   \end{split}
\end{align*}
Therefore, \eqref{e51} leads to
\[
    \lim_{\ep\to0^+}\iint_{ C_R}|\mathcal{A}(z,\na u_\ep^k)-\mathcal{A}(z,\na u^k)||\na \varphi|\,dz=0,
\]
which means that $\mathcal{A}_\epsilon(z,\na u^k_\epsilon)\to \mathcal{A}(z,\na u^k)$ weakly in $\mathcal{D}'(Q_R,\RR^{Nn}) $ as $ \ep\to 0^+.$

To prove \eqref{e51}, we take $u^k_\epsilon-u^k_{\epsilon'}\in C(I_R;L^2(D_R,\RR^N)) \cap L^q(I_R;W^{1,q}_0(D_R,\RR^N))$ as a test function to the difference of corresponding systems
\begin{align*}
\begin{split}
    &\pa_t\bigl(u^k_{\epsilon}-u^k_{\epsilon'}\bigr)-\dv\bigl(\mathcal{A}(z,\na u_\ep^k)-\mathcal{A}(z,\na u_{\epsilon'}^k)\bigr)\\
    &\qquad=-\dv\bigl(|F_\ep^k|^{p-2}F_\ep^k-|F_{\ep'}^k|^{p-2}F_{\ep'}^k+a(z)\bigl(|F_\ep^k|^{q-2}F_\ep^k-|F_{\ep'}^k|^{q-2}F_{\ep'}^k\bigr)\bigr)\\
    &\qquad\qquad -\dv \bigl(\epsilon|F^k_{\epsilon}|^{q-2}F^k_\epsilon-\epsilon'|F^k_{\epsilon'}|^{q-2}F^k_{\epsilon'}\bigr)\\
    &\qquad\qquad+\dv\bigl(b(z)\bigl(\epsilon|\na u^k_{\epsilon}|^{q-2}\na u^k_\epsilon-\epsilon'|\na u^k_{\epsilon'}|^{q-2}\na u^k_{\epsilon'}\bigr)\bigr).
\end{split}
\end{align*}
The standard comparison estimate gives a constant $c=c(n,N,p,q,\nu,L)$ such that
\begin{align*}
\begin{split}
    &\iint_{ C_R} H(z,|\na u_\ep^k-\na u^k_{\ep'}|)\,dz\\
    &\qquad\le c\iint_{ C_R}\left(||F^k_{\epsilon}|^{p-2}F^k_{\epsilon}-|F^k_{\epsilon'}|^{p-2}F^k_{\epsilon'}|^\frac{p}{p-1}+a(z)||F^k_{\epsilon}|^{q-2}F^k_{\epsilon}-|F^k_{\epsilon'}|^{q-2}F^k_{\epsilon'}|^\frac{q}{q-1}\right)\,dz\\
    &\qquad\qquad +c(\epsilon+\epsilon')\iint_{ C_R}\left(|F^k_{\epsilon}|^q+|F^k_{\epsilon'}|^q+|\na u^k_{\epsilon}|^q+|\na u^k_{\epsilon'}|^q\right)\,dz.
\end{split}
\end{align*}
The last term also converges to zero as $\ep,\ep'\to0^+$, because of $|F^k_\epsilon|,|F^k_{\epsilon'}|\le k$ and \eqref{e47}. Furthermore, the Lebesgue dominated convergence theorem implies that the first term on the right-hand side also converges to zero. Therefore, the claim in \eqref{e51} is proved. Since $k\in\mathbb N$ was arbitrary, we obtain a sequence $(u^k)_{k\in\mathbb N}\subset C(I_R;L^2(D_R,\RR^N))\cap L^q(I_R;W^{1,q}_0(D_R,\RR^N))$ of weak solutions to \eqref{e50}. Moreover, from \eqref{e42} and \eqref{e45} we have
\begin{align}\label{e58}
\begin{split}
    &\sup_{t\in I_R}\int_{D_R}|u^k(x,t)|^2\,dx+\iint_{ C_R}H(z,|\na u^k|)\,dz+\lVert \pa_t u^k\rVert_{L^{q'}(I_R;W^{-1,q'}(D_R,\RR^N))}\\
    &\qquad\le c\left(\iint_{ C_R}H(z,|F|)\,dz+1\right).
\end{split}    
\end{align}

\subsubsection{Second convergence}
Note that \eqref{e58} guarantees $(\na u^k)_{k\geq1}$ is bounded only in $L^p( C_R,\RR^{Nn})$. Since 
\[
        L^p(I_R;W^{1,p}_0(D_R,\RR^N))\Subset L^p( C_R,\RR^N)\Subset L^2( C_R,\RR^N) \subset L^{q'}(I_R;W^{-1,q'}(D_R,\RR^N)),
\]
there exist limit functions $u\in C(I_R;L^2(D_R,\RR^N))\cap L^p(I_R;W^{1,p}_0(D_R,\RR^N))$ and $v\in L^q( C_R,\RR^{Nn})$ such that 
\begin{align*}
\begin{split}
    &u^k \to  u    \quad\text{strongly in}\ C(I_R;L^2(D_R,\RR^N))\cap L^p( C_R,\RR^N),\\
    & u^k\to u    \quad\text{weakly in}\ L^p(I_R;W^{1,p}_0(D_R,\RR^N)),\\
    & \pa_tu^k_\epsilon \to \pa_t u^k   \quad\text{weakly in}\ L^{q'}(I_R;W^{-1,q'}(D_R,\RR^N)),\\
    & a^\frac{1}{q}\na u^k\to v \quad\text{weakly in}\ L^q( C_R,\RR^{Nn}),
\end{split}
\end{align*}
as $k\to\infty$.
As in the first convergence result, it is enough to show
\begin{align}\label{e61}
    \iint_{ C_R}H(z,|\na u^k-\na u|)\,dz\to0 \quad\text{as}\ k\to\infty.
\end{align}
We take $u^k-u^{k'}\in C(I_R;L^2(D_R,\RR^N))\cap L^q(I_R;W^{1,q}_0(D_R,\RR^N))$ as a test function in
\begin{align*}
    \begin{split}
        &\pa_t\bigl(u^{k}-u^{k'}\bigr)-\dv\bigl(\mathcal{A}(z,\na u^k)-\mathcal{A}(z,\na u^{k'})\bigr)\\
        &\qquad=-\dv\bigl(|F^k|^{p-2}F^k-|F^{k'}|^{p-2}F^{k'}+a(z)\bigl(|F^k|^{q-2}F^k-|F^{k'}|^{q-2}F^{k'}\bigr)\bigr)
    \end{split}
\end{align*}
in $ C_R$ and obtain a constant $c=c(n,N,p,q,\nu)$ such that
\begin{align*}
\begin{split}
    &\iint_{ C_R} H(z,|\na u^k-\na u^{k'}|)\,dz\\
    &\qquad\le c\iint_{ C_R}\left(||F^k|^{p-2}F^k-|F^{k'}|^{p-2}F^{k'}|^\frac{p}{p-1}+a(z)||F^k|^{q-2}F^k-|F^{k'}|^{q-2}F^{k'}|^\frac{q}{q-1}\right)\,dz.
\end{split}
\end{align*}
As we have $|F^k|,|F^{k'}|\le |F|$,
we conclude that
\[
    \lim_{k,k'\to\infty}\iint_{ C_R}H(z,|\na u^k-\na u^{k'}|)\,dz=0.
\]
This shows that $\na u^k $ is a Cauchy sequence with respect to the double phase functional and by the weak convergence mentioned above the limit is $\na u$. This proves \eqref{e61} and therefore $u$ is a weak solution to \eqref{e2} with $g\equiv0$. 

To conclude the approximation statement of Theorem~\ref{main3}, we use \eqref{e51}.  For every $k\in\mathbb N$, we have $\ep_k=\ep_k(k)$ small enough so that
\[
    \iint_{ C_R}H(z,|\na u^k_{\ep_k}-\na u^k|)\,dz\le \frac{1}{k}.
\]
It follows from \eqref{e61} that
\[
    \lim_{k\to\infty}\iint_{ C_R}H(z,|\na u^k_{\ep_k}-\na u|)\,dz=0.
\]
Letting $u_l=u^l_{\ep_l}$ for every $l\in\mathbb N$, the proof is completed.

\end{document}